\title{On the number of variables to use in principal component regression}
\author{Ji Xu}
\author{Daniel Hsu}
\affil{Columbia University}
\newtheorem{theorem}{Theorem}
\newtheorem{lemma}{Lemma}
\newtheorem{proposition}{Proposition}
\theoremstyle{definition}
\theoremstyle{remark}
\newtheorem{remark}{Remark}
\newcommand\A[1]{\textbf{A.#1}}
\newcommand\B[1]{\textbf{B.#1}}
\def\vbX{\bar{\vX}}
\def\vtX{\tilde{\vX}}
\def\vtx{\tilde{\vx}}
\def\vbx{\bar{\vx}}
\def\vtS{\tilde{\vS}}
\def\vbS{\bar{\vS}}
\def\vtSigma{\tilde{\vSigma}}
\def\tsigma{\tilde{\lambda}}
\def\vXP{\vX_P}
\def\vXpC{\vX_{P^c}}
\def\vbXP{\vbX_P}
\def\vtXP{\vtX_P}
\def\vSigmaP{\vSigma_P}
\def\vSigmapC{\vSigma_{P^c}}
\def\vtSigmaP{\vtSigma_P}
\def\={\ = \ }
\def\qand{\quad \text{and} \quad }
\def\ast{\stackrel{\mathrm{a.s.}}{\rightarrow}}
\def\ipt{\stackrel{\mathrm{p}}{\rightarrow}}
\def\Ipt{\stackrel{\mathrm{p}}{\rightarrow}}
\def\BE{\begin{eqnarray}}
\def\EE{\end{eqnarray}}
\def\error{\operatorname{Error}}
\def\n{\nonumber}
\def\ratiop{\alpha}
\def\ration{\beta}
\def\op{o_{\mathrm{p}}}
\def\Op{O_{\mathrm{p}}}
\def\Omp{\Omega_{\mathrm{p}}}
\def\Thp{\Theta_{\mathrm{p}}}
\newcommand \trace[1]{\operatorname{tr}\left(#1\right)} 
\newcommand \tr[1]{\operatorname{tr}(#1)}
\newcommand\Normal{\mathcal{N}}
\def\ddefloop#1{\ifx\ddefloop#1\else\ddef{#1}\expandafter\ddefloop\fi}
\def\ddef#1{\expandafter\def\csname bf#1\endcsname{\ensuremath{\mathbf{#1}}}}
\def\ddef#1{\expandafter\def\csname bf#1\endcsname{\ensuremath{\pmb{\csname #1\endcsname}}}}
\def\ddef#1{\expandafter\def\csname bb#1\endcsname{\ensuremath{\mathbb{#1}}}}
\def\ddef#1{\expandafter\def\csname c#1\endcsname{\ensuremath{\mathcal{#1}}}}
\def\ddef#1{\expandafter\def\csname v#1\endcsname{\ensuremath{\boldsymbol{#1}}}}
\def\ddef#1{\expandafter\def\csname v#1\endcsname{\ensuremath{\boldsymbol{\csname #1\endcsname}}}}
\renewcommand\t{{\ensuremath{\scriptscriptstyle{\top}}}}
\renewcommand{\P}{\ensuremath{\mathbb{P}}}
\DeclareMathOperator{\var}{var}
\renewcommand\v{\ensuremath{\boldsymbol}}
 \newcommand\ind[1]{\ensuremath{\mathds{1}_{\{#1\}}}}
\begin{document}

\maketitle
{\def\thefootnote{}
\footnotetext{E-mail:
\texttt{jixu@cs.columbia.edu},
\texttt{djhsu@cs.columbia.edu}}}

\begin{abstract}
We study least squares linear regression over $N$ uncorrelated Gaussian features that are selected in order of decreasing variance. When the number of selected features $p$ is at most the sample size $n$, the estimator under consideration coincides with the principal component regression estimator; when $p>n$, the estimator is the least $\ell_2$ norm solution over the selected features. We give an average-case analysis of the out-of-sample prediction error as $p,n,N \to \infty$ with $p/N \to \alpha$ and $n/N \to \beta$, for some constants $\alpha \in [0,1]$ and $\beta \in (0,1)$. In this average-case setting, the prediction error exhibits a ``double descent'' shape as a function of $p$. We also establish conditions under which the minimum risk is achieved in the interpolating ($p>n$) regime.
 \end{abstract}

\section{Introduction}

In principal component regression (PCR), a linear model is fit to variables obtained using principal component analysis on the original covariates.
Suppose the data consists of $n$ i.i.d.\ observations $(\vx_1,y_1),\dotsc,(\vx_n,y_n)$ from $\bbR^N \times \bbR$.
Let $\vX := [ \vx_1 | \dotsb | \vx_n ]^\t$ be the $n \times N$ design matrix, $\vy := ( y_1, \dotsc, y_n )^\t$ be the $n$-dimensional vector of responses, and $\vSigma := \bbE[ \vx_1\vx_1^\t ] \in \bbR^{N \times N}$.
Assuming $\vSigma$ is known (as we do in this paper), the PCR fit is given by $\vV (\vX\vV)^+ \vy$, where $\vV \in \bbR^{N \times p}$ is the matrix of top $p$ (orthonormal) eigenvectors of $\vSigma$, and $\vA^+$ denotes the Moore-Penrose pseudo-inverse of $\vA$.
PCR notably addresses issues of multi-collinearity in under-determined ($n < N$) settings, while avoiding saturation effects suffered by other regression methods such as ridge regression~\citep{mathe2004saturation,bauer2007regularization,gerfo2008spectral}.

The critical parameter in PCR is the number of components $p$ to include in the regression.
Nearly all previous analyses of variable selection have restricted attention to the $p < n$ regime~\citep[e.g.,][]{breiman1983many}.
This restriction may seem benign, as conventional wisdom suggests that choosing $p > n$ leads to over-fitting.
This paper aims to challenge this conventional wisdom in a particular setting for PCR.

We study the prediction error of the PCR fit for all values of $p$ in the under-determined regime.
We assume the $\vx_i$ are Gaussian and conduct an ``average-case'' analysis, where the ``true'' coefficient vector is randomly chosen from an isotropic prior distribution.
Thus, all of the original variables in $\vx_i$ are relevant but weak in terms of predicting the response.
When the eigenvalues of $\vSigma$ exhibit some decay, one expects diminishing returns as $p$ increases.
It is often suggested to find a value of $p$ that balances bias and variance, and such a value of $p$ can be found in the $p < n$ regime.

However, we show that when $p > n$, the prediction error can again be decreasing with $p$.
This phenomenon---the second descent of the so-called ``double descent'' risk curve~\citep{belkin2018reconciling}---has been observed in a number of scenarios and for many different machine learning models (where $p$ is regarded as a nominal number of model parameters)~\citep{belkin2018reconciling,spigler2018jamming,belkin2019two,hastie2019surprises,muthukumar2019harmless}.
In these previous studies, the limiting risk as $p \to \infty$ was often (but not always) observed to be lower than the best risk achieved in the $p < n$ regime.
We prove that this phenomenon occurs with PCR in our data model: the lowest prediction error is achieved at some $p>n$, rather than any $p<n$.

\paragraph{Our data model.}
Our data $(\vx_1,y_1),\dotsc,(\vx_n,y_n)$ are assumed to be i.i.d.~with $\vx_i \sim \Normal(\v0,\vSigma)$, and
\[ y_i = \vx_i^\t \vtheta + w_i . \]
Here, $w_1,\dotsc,w_n$ are i.i.d.~$\Normal(0,\sigma^2)$ noise variables, and $\vtheta \in \bbR^N$ is the true coefficient vector.
We assume, without loss of generality, that $\vSigma$ is diagonal.
In fact, we shall take $\vSigma := \operatorname{diag}(\lambda_1,\dotsc,\lambda_N)$ with distinct positive eigenvalues $\lambda_1 > \dotsb > \lambda_N > 0$.
The prediction (squared) error of $\vtheta' \in \bbR^N$ is $\bbE_{\vx,y}[ (y - \vx^\t \vtheta')^2 ]$, where $(\vx,y)$ is an independent copy of $(\vx_1,y_1)$.

\emph{Some notation.}
For a vector $\vv \in \bbR^N$, let $\vv_P \in \bbR^p$ denote the sub-vector of the first $p$ entries of $\vv$, and let $\vv_{P^c} \in \bbR^{N-p}$ denote the sub-vector of the last $N-p$ entries.
Similarly, for a matrix $\vM \in \bbR^{n \times N}$, let $\vM_P \in \bbR^{n \times p}$ denote the sub-matrix of the first $p$ columns of $\vM$, and let $\vM_{P^c} \in \bbR^{n \times (N-p)}$ denote the sub-matrix of the last $N-p$ columns.

Recall that PCR selects components in order of decreasing $\lambda_j$.
So, using the notation from above, the PCR estimator $\hat\vtheta$ for $\vtheta$ is defined by
\begin{equation}
  \hat\vtheta_P
  \ :=
  \begin{cases}
    (\vX_P^\t \vX_P)^{-1} \vX_P^\t \vy & \text{if $p \leq n$} , \\
    \vX_P^\t (\vX_P \vX_P^\t)^{-1} \vy & \text{if $p > n$} ;
  \end{cases}
  \qquad
  \hat\vtheta_{P^c}
  \ := \
  \v0 .
  \label{eq:hatvtheta}
\end{equation}
(Recall that $\vX := [ \vx_1 | \dotsb | \vx_n ]^\t$ and $\vy := ( y_1, \dotsc, y_n )^\t$; also, the matrices being inverted above are, indeed, invertible with probability $1$.)
The prediction error of the PCR estimate $\hat\vtheta$ is denoted by 
\[ \error \ := \ \bbE_{\vx,y}[ (y - \vx^\t \hat\vtheta)^2 ] . \]

Observe that the (squared) correlation between the response and the $j$th variable is proportional to $\lambda_j \theta_j^2$, but PCR selects variables only on the basis of the $\lambda_j$.
So, for a worst-case $\vtheta$, PCR may be unlucky and end up selecting the $p$ least correlated variables.
To avoid this worst-case scenario, we consider an ``average-case'' analysis, where the true coefficient vector $\vtheta$ is independently drawn from an isotropic prior distribution:
\BE \bbE_{\vtheta}[ \vtheta ] = \v0 , \quad \bbE_{\vtheta}[ \vtheta\vtheta^\t ] = \vI . \label{eq:thetaass}\EE
We will study the random quantity $\bbE_{\vw,\vtheta}[ \error ]$, where the expectation is conditional on the design matrix $\vX$, but averages over the observation noise $\vw = (w_1,\dotsc,w_n)$ and random choice of $\vtheta$.

Our analysis uses high-dimensional asymptotic considerations to study the under-determined ($n < N$) regression problem, letting $p, n, N \to \infty$ with $p/N \to \ratiop$ and $n/N \to \ration$ for some fixed constants $\ratiop \in [0,1]$ and $\ration \in (0,1)$.
We are primarily interested in the limiting value of $\bbE_{\vw,\vtheta}[ \error ]$, which is the \emph{asymptotic risk}.

\paragraph{Our results.}

In \Cref{sec:special}, we give an exact expression for the asymptotic risk in the case where the eigenvalues of $\vSigma$ exhibit polynomial decay, namely $\lambda_j = j^{-\kappa}$ for a fixed constant $\kappa>0$.
Our expression covers both the $p < n$ and $p > n$ regimes, and we find that the smallest asymptotic risk can be achieved with $p > n$ (or equivalently, $\ratiop > \ration$) in noiseless settings.
In noisy settings, the comparison of the $p < n$ and $p > n$ regimes depends crucially on the exponent $\kappa$.

In \Cref{sec:general}, we relax the condition on the eigenvalues, and instead just assume that the empirical distribution of the $c_N\lambda_j$, for some suitable sequence $(c_N)_{N\geq1}$, has a ``nice'' limiting distribution.
We obtain results similar to those in \Cref{sec:special} using a slightly different variable selection rule.

Our analyses permit a $1-o(1)$ fraction of $\lambda_j$'s to converge to zero as $p,n,N\to\infty$. (In particular, the $c_N$ may go to infinity.) This makes our analysis technically non-trivial and more generally applicable.

\paragraph{Related works.}

Strategies for choosing the optimal value of $p$ in PCR (e.g., cross validation, variance inflation factors) are typically only studied in the $p < n$ regime~\citep{jolliffe2011principal}.
For instance, the exact risk of PCR as a function of $p$ for Gaussian designs can be extracted from the analysis of \citet{breiman1983many}, but only for the $p < n$ regime.

The high-dimensional analyses of ridge regression by \citet{dicker2016ridge,dobriban2018high,hastie2019surprises} are closely related to our work.
Indeed, for fixed $p$, the PCR estimator (or ``ridgeless'' estimator) is obtained by taking the ridge regularization parameter to zero.
These analyses extend beyond the Gaussian design setting that we consider, but are restricted to cases where either all eigenvalues of $\vSigma$ remain bounded below by an absolute constant as $N\to\infty$, or where the ridge regularization parameter is held at some positive constant.

The ``double descent'' phenomenon was observed by several researchers \citep[e.g.,][]{belkin2018reconciling,spigler2018jamming,muthukumar2019harmless,hastie2019surprises} for a variety of machine learning models such as neural networks and ensemble methods.
\citet{muthukumar2019harmless,belkin2019two,hastie2019surprises} provide statistical explanations for this phenomenon by studying the behavior of the minimum $\ell_2$ norm linear fit with $p > n$.
The analysis of \citet{muthukumar2019harmless} restricts attention to correctly-specified linear models (i.e., $p=N$ in our notation) and shows some potential benefits of the $p > n$ regime.
A related analysis of estimation variance was carried out by \citet{neal2018modern}.
The analysis of \citet{belkin2019two} studies an isotropic Gaussian design that is otherwise similar to our setup, as well as a Fourier design that is related to the random Fourier features of \citet{rahimi2008random}.
The analyses of \citet{hastie2019surprises} look at more general and non-isotropic designs (and, in fact, certain non-linear models related to neural networks!), but as mentioned before, they assume the eigenvalues of $\vSigma$ are bounded away from zero.
While their ``misspecified'' setting appears to be similar to our setup, we note that varying their $p/n$ parameter (which they call $\gamma$) changes the statistical problem under consideration.
In contrast, our analysis looks at the effect of choosing different $p$ on the same statistical problem, and thus is able to shed light on the number of variables one should use in principal component regression.

\paragraph{Notations for asymptotics.}
For any two random quantities $X$ and $Y$, we use the notation $X\ipt Y$ to mean that $X=Y+\op(Y)$ as $n,p,N\rightarrow \infty$.
Similarly, for any two non-random quantities $X$ and $Y$, we use the notation $X\rightarrow Y$ to mean that $X=Y+o(Y)$ as $n,p,N\rightarrow \infty$. Finally, we say that $X>Y$ holds in probability if $\Pr(X>Y)\rightarrow 1$ as $n,p,N\rightarrow \infty$.

\section{Analysis under polynomial eigenvalue decay}
\label{sec:special}

In this section, we analyze the asymptotic risk of PCR under the following assumptions:
\begin{itemize}
  \item[\A1] There exists a constant $\kappa>0$ such that $\lambda_j = j^{-\kappa}$ for all $j = 1,\dotsc,N$.
  \item[\A2] There exist constants $\ratiop \in \intcc{0,1}$ and $\ration \in \intoo{0,1}$ such that $p/N \to \ratiop$ and $n/N \to \ration$ as $p,n,N \to \infty$.
\end{itemize}
Assumption \A1 implies that the eigenvalues of $\vSigma$ decay to zero at a polynomial rate, while Assumption \A2 is a standard scaling for high-dimensional asymptotic analysis.

We also assume in this section that there is no observation noise, i.e., $\var(w_i) = \sigma^2 = 0$.
In the noiseless setting, the asymptotic risk is the limiting value of $\bbE_{\vtheta}[ \error ]$.
Results for the noisy setting are stated in \Cref{app:noise}.

\subsection{Main results}
\label{sec:special-results}

Our first \namecref{thm:p<n} provides characterizes the asymptotic risk when $\ratiop < \ration$.
Define the functions $h_{\kappa}$ and $\mathcal{R}_{\kappa}$ on $\intoo{0,\ration}$:\\
~\vspace{-15pt}
\begin{align}
  h_{\kappa}(\ratiop)
  & \ := \
  \frac{\ration }{\ratiop }-\int_{\ratiop }^1t^{\kappa-2}\dif t -1 ,
  \quad \text{for all $\ratiop < \ration$} ;
  \label{eq:h}
  \\
  \mathcal{R}_{\kappa}(\ratiop)
  & \ := \ N^{1-\kappa}\int_{\ratiop}^1t^{-\kappa}\dif t\cdot \frac{\ration }{\ration -\ratiop } ,
  \quad \text{for all $\ratiop < \ration$} .
  \label{eq:ipt}
\end{align}

\begin{theorem}\label{thm:p<n}
  Assume \A1 with constant $\kappa$; \A2 with constants $\ratiop$ and $\ration$; $\sigma^2 = 0$; and $\ratiop < \ration$.
  Then
  \begin{equation*}
    \bbE_{\vtheta}[\error] \ \ipt \ \mathcal{R}_{\kappa}(\ratiop) .
  \end{equation*}
  Furthermore, the equation $h_{\kappa}(\ratiop) = 0$ has a unique solution $\ratiop^*$ over the interval $\intoo{0,\ration}$, and
  $\mathcal{R}_{\kappa}(\ratiop)$ is decreasing on $\ratiop\in \intco{0,\ratiop^*}$ and increasing on $\ratiop\in (\ratiop^*,\ration)$.
  Finally,
  \begin{equation}
    \mathcal{R}_{\kappa}(\ratiop^*)
    \ = \
    \min_{0 \leq \ratiop < \ration }\ \mathcal{R}_{\kappa}(\ratiop)
    \ = \
    N^{1-\kappa}\frac{\ration }{(\ratiop^*)^{\kappa}}
    .\label{eq:minp<n}
  \end{equation}
\end{theorem}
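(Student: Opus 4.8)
The plan is to first reduce $\bbE_{\vtheta}[\error]$ to a transparent two-term expression, then identify the limit of each term, and finally analyze the resulting one-variable function $\mathcal{R}_\kappa$ by elementary calculus, the key device being the auxiliary function $H(\alpha):=\alpha\,h_\kappa(\alpha)$. \textbf{Step 1 (exact formula).} Since $\sigma^2=0$ we have $y=\vx^\t\vtheta$, so $\error=(\hat\vtheta-\vtheta)^\t\vSigma(\hat\vtheta-\vtheta)$; as $\vSigma$ is diagonal and $\hat\vtheta_{P^c}=\v0$ this equals $(\hat\vtheta_P-\vtheta_P)^\t\vSigmaP(\hat\vtheta_P-\vtheta_P)+\vtheta_{P^c}^\t\vSigmapC\vtheta_{P^c}$. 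Plugging $\vy=\vXP\vtheta_P+\vXpC\vtheta_{P^c}$ into \eqref{eq:hatvtheta} (with $p\le n$) yields the omitted-variable identity $\hat\vtheta_P-\vtheta_P=(\vXP^\t\vXP)^{-1}\vXP^\t\vXpC\vtheta_{P^c}$, which involves only $\vtheta_{P^c}$. Taking $\bbE_{\vtheta}$ with $\bbE_{\vtheta}[\vtheta\vtheta^\t]=\vI$ and using the cyclic invariance of trace gives, with probability one,
\begin{equation*}
  \bbE_{\vtheta}[\error] \= \tr{\vSigmapC}+\tr{\vXpC^\t\vC\vXpC}, \qquad \vC := \vXP(\vXP^\t\vXP)^{-1}\vSigmaP(\vXP^\t\vXP)^{-1}\vXP^\t .
\end{equation*}

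\textbf{Step 2 (the limit).} Condition on $\vXP$ and write $\vXpC=\vG\,\vSigmapC^{1/2}$ with $\vG\in\bbR^{n\times(N-p)}$ having i.i.d.\ $\Normal(0,1)$ entries; then $\tr{\vXpC^\t\vC\vXpC}=\sum_j\lambda_{p+j}\,\vg_j^\t\vC\vg_j$ is a sum of independent terms with conditional mean $\tr{\vSigmapC}\tr{\vC}$ and conditional variance at most $2\,\tr{\vC^2}\lambda_{p+1}\tr{\vSigmapC}\le 2\,\norm{\vC}\tr{\vC}\lambda_{p+1}\tr{\vSigmapC}$. Substituting $\vXP=\vG_P\vSigmaP^{1/2}$ and cancelling shows that $\vC$ has the same nonzero eigenvalues as $(\vG_P^\t\vG_P)^{-1}$, so $\tr{\vC}=\tr{(\vG_P^\t\vG_P)^{-1}}$ and $\norm{\vC}=\norm{(\vG_P^\t\vG_P)^{-1}}$, the latter being $\Op(1/n)$ for $p\le n$. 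Hence the relative fluctuation of $\tr{\vXpC^\t\vC\vXpC}$ about $\tr{\vSigmapC}\tr{\vC}$ is $\Op(\norm{\vC}\lambda_{p+1}/(\tr{\vSigmapC}\tr{\vC}))$, which vanishes because $\lambda_{p+1}\to0$. It then remains to evaluate the two factors: by the standard behavior of the trace of an inverse Wishart matrix (equivalently, the Marchenko--Pastur law), $\tr{(\vG_P^\t\vG_P)^{-1}}\ipt p/(n-p)$, which under \A2 tends to $\alpha/(\beta-\alpha)$; while a Riemann-sum comparison gives $\tr{\vSigmapC}=\sum_{j=p+1}^N j^{-\kappa}\to N^{1-\kappa}\int_\alpha^1 t^{-\kappa}\dif t$. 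Combining, $\bbE_{\vtheta}[\error]\ipt N^{1-\kappa}\int_\alpha^1 t^{-\kappa}\dif t\cdot\beta/(\beta-\alpha)=\mathcal{R}_\kappa(\alpha)$.

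\textbf{Step 3 (calculus for $\alpha^*$, monotonicity, and the minimum value).} Set $H(\alpha):=\alpha\,h_\kappa(\alpha)=\beta-\alpha-\alpha\int_\alpha^1 t^{\kappa-2}\dif t$. Differentiating gives $H'(\alpha)=\kappa(\alpha^{\kappa-1}-1)/(\kappa-1)$ for $\kappa\ne1$ and $H'(\alpha)=\ln\alpha$ for $\kappa=1$; in every case $H'(\alpha)<0$ on $(0,1)$, so $H$ (hence $h_\kappa$) is strictly decreasing there. Since $H(0^+)=\beta>0$ and $H(\beta)=-\beta\int_\beta^1 t^{\kappa-2}\dif t<0$, $h_\kappa$ has a unique zero $\alpha^*\in\intoo{0,\beta}$. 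Next, the elementary identity $\alpha^\kappa\int_\alpha^1 t^{-\kappa}\dif t=\alpha\int_\alpha^1 t^{\kappa-2}\dif t$ (both sides equal $(\alpha-\alpha^\kappa)/(\kappa-1)$, or $-\alpha\ln\alpha$ when $\kappa=1$) shows that $\frac{\dif}{\dif\alpha}\log\mathcal{R}_\kappa(\alpha)$ has the same sign as $-H(\alpha)$; therefore $\mathcal{R}_\kappa$ is decreasing on $\intco{0,\alpha^*}$ and increasing on $\intoo{\alpha^*,\beta}$, so its minimum over $\intco{0,\beta}$ is $\mathcal{R}_\kappa(\alpha^*)$. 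Finally, $h_\kappa(\alpha^*)=0$ rearranges to $\int_{\alpha^*}^1 t^{\kappa-2}\dif t=(\beta-\alpha^*)/\alpha^*$, so by the identity $\int_{\alpha^*}^1 t^{-\kappa}\dif t=(\beta-\alpha^*)/(\alpha^*)^\kappa$, and substituting this into the definition of $\mathcal{R}_\kappa$ collapses it to $N^{1-\kappa}\beta/(\alpha^*)^\kappa$.

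\textbf{Main obstacle.} Steps 1 and 3 are routine; the substance lies in Step 2, namely showing that the Gaussian quadratic form $\tr{\vXpC^\t\vC\vXpC}$ and the inverse-Wishart trace $\tr{(\vG_P^\t\vG_P)^{-1}}$ concentrate at the stated values \emph{uniformly} as $p,n,N\to\infty$ while a $1-o(1)$ fraction of the $\lambda_j$ tend to $0$. This needs operator-norm control of $(\vXP^\t\vXP)^{-1}$ (a lower bound on the smallest singular value of the tall Gaussian matrix $\vG_P$) and some care with the Riemann-sum comparison near the lower endpoint, in particular in the boundary case $\alpha=0$ with $\kappa\ge1$, where $\mathcal{R}_\kappa$ is formally infinite.
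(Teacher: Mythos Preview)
Your proposal is correct and follows essentially the same route as the paper: the same exact-formula reduction, the same concentration-over-$\vXpC$ argument leading to $\tr{\vSigmapC}(1+\tr{(\vbXP^\t\vbXP)^{-1}})$, the same inverse-Wishart and Riemann-sum limits, and the same calculus on $\mathcal{R}_\kappa$. The only cosmetic differences are your choice of auxiliary function $H(\alpha)=\alpha\,h_\kappa(\alpha)$ (the paper uses $\tilde h_\kappa(\alpha)=\alpha^{1-\kappa}h_\kappa(\alpha)$) and your use of the logarithmic derivative together with the identity $\alpha^\kappa\int_\alpha^1 t^{-\kappa}\dif t=\alpha\int_\alpha^1 t^{\kappa-2}\dif t$, which slightly streamlines the monotonicity check; the paper instead differentiates $N^{\kappa-1}\mathcal{R}_\kappa$ directly and cites Tulino--Verd\'u for the $\alpha=0$ boundary case you flag in your ``main obstacle'' paragraph.
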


The proof of \Cref{thm:p<n} is sketched in \Cref{sec:proofp<n}, with some details left to \Cref{app:p<n}.
\Cref{thm:p<n} supports the well-known intuition that the risk curve is ``U-shaped'' in the $p < n$ regime.
Our next \namecref{thm:p>n}, however, shows a very different behavior when $\ratiop > \ration$.

Formally define $m_{\kappa}(z)$ for $z \leq 0$ to be the smallest positive solution to the equation
\begin{equation}
	-z \ = \ \frac{1}{m_{\kappa}(z)} -\frac{1}{\ration } \int_{\ratiop ^{-\kappa}}^{\infty} \frac{1}{\kappa t^{1/\kappa}(1+t\cdot m_{\kappa}(z))}\dif t , \label{eq:mz}
\end{equation}
and let $m_{\kappa}'(\cdot)$ denote the derivative of $m_{\kappa}(\cdot)$.
Also define the function $\mathcal{R}_{\kappa}$ on $\intoc{\ration,1}$:
\begin{equation}
  \mathcal{R}_{\kappa}(\ratiop) \ := N^{1-\kappa}\left(\frac{\ration }{m_{\kappa}(0)}+\int_{\ratiop}^{1}t^{-\kappa}\dif t \cdot \frac{m_{\kappa}'(0)}{m_{\kappa}(0)^2}\right) ,
  \quad \text{for all $\ratiop > \ration$} .
  \label{eq:iptp>n}
\end{equation}

\begin{theorem} \label{thm:p>n}
  Assume \A1 with constant $\kappa$; \A2 with constants $\ratiop$ and $\ration$; $\sigma^2 = 0$; and $\ratiop > \ration$.
  The function $m_{\kappa}$ and its derivative $m_{\kappa}'$ are well-defined and positive at $z=0$ (and hence $\mathcal{R}_{\kappa}(\ratiop)$ is well-defined for all $\ratiop>\ration$).
  Moreover,
  \begin{equation*}
    \bbE_{\vtheta}[\error] \ \Ipt \ \mathcal{R}_{\kappa}(\ratiop) .
  \end{equation*}
\end{theorem}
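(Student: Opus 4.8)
The plan is to reduce $\bbE_{\vtheta}[\error]$ to a few spectral functionals of $\vX_P$ and then evaluate those by random matrix theory. Since $\sigma^2=0$ we have $y=\vx^\t\vtheta$, so $\error=(\vtheta-\hat\vtheta)^\t\vSigma(\vtheta-\hat\vtheta)$; splitting coordinates into $P$ and $P^c$, using $\hat\vtheta_{P^c}=\v0$, $\vy=\vX_P\vtheta_P+\vX_{P^c}\vtheta_{P^c}$, and $\vPi:=\vX_P^\t(\vX_P\vX_P^\t)^{-1}\vX_P$, one gets $\vtheta_P-\hat\vtheta_P=(\vI-\vPi)\vtheta_P-\vX_P^\t(\vX_P\vX_P^\t)^{-1}\vX_{P^c}\vtheta_{P^c}$. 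Averaging over $\vtheta$ with $\bbE_\vtheta[\vtheta\vtheta^\t]=\vI$ (so $\vtheta_P\perp\vtheta_{P^c}$) and $(\vI-\vPi)^2=\vI-\vPi$ gives $\bbE_\vtheta[\error]=T_1+T_2+T_3$, with $T_1:=\tr{\vSigma_P(\vI-\vPi)}$, $T_2:=\tr{\vM\,\vX_{P^c}\vX_{P^c}^\t}$ where $\vM:=(\vX_P\vX_P^\t)^{-1}\vX_P\vSigma_P\vX_P^\t(\vX_P\vX_P^\t)^{-1}$ depends only on $\vX_P$, and $T_3:=\tr{\vSigma_{P^c}}=\sum_{j>p}j^{-\kappa}$. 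The term $T_3$ converges to $N^{1-\kappa}\int_{\ratiop}^1 t^{-\kappa}\dif t$ by comparison with the integral. For $T_2$ I condition on $\vX_P$ and write $\vX_{P^c}\vX_{P^c}^\t=\sum_{j>p}j^{-\kappa}\vz_j\vz_j^\t$ with $\vz_j\sim\Normal(\v0,\vI_n)$ i.i.d.\ and independent of $\vX_P$: then $\bbE[T_2\mid\vX_P]=T_3\tr{\vM}$, while $\var(T_2\mid\vX_P)=2\norm{\vM}_F^2\sum_{j>p}j^{-2\kappa}\le2(\tr{\vM})^2\sum_{j>p}j^{-2\kappa}$, so the conditional coefficient of variation of $T_2$ is $O\big(\sum_{j>p}j^{-2\kappa}/(\sum_{j>p}j^{-\kappa})^2\big)\to0$ uniformly in $\vX_P$; hence $T_2=T_3\tr{\vM}\,(1+\op(1))$. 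It then remains to find the limits of $T_1$ and $\tr{\vM}$.

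Next I would identify the function $m_\kappa$. Write $\vX_P=\vZ_P\vSigma_P^{1/2}$ with $\vZ_P$ having i.i.d.\ $\Normal(0,1)$ entries, set $c_N:=N^{\kappa}$ and $\vT:=c_N\vSigma_P=\operatorname{diag}\big((j/N)^{-\kappa}\big)_{j\le p}$, whose empirical spectral distribution converges to the law $H$ with density $(\kappa\ratiop)^{-1}t^{-1/\kappa-1}$ on $[\ratiop^{-\kappa},\infty)$. Then $\tfrac{c_N}{n}\vX_P\vX_P^\t=\tfrac1n\vZ_P\vT\vZ_P^\t$ is an $n\times n$ companion sample-covariance matrix with population law $H$ and aspect ratio $p/n\to\ratiop/\ration>1$; by Marchenko--Pastur/Silverstein theory, $\tfrac1n\tr{(\tfrac{c_N}{n}\vX_P\vX_P^\t-z\vI_n)^{-1}}$ converges to the solution $m(z)$ of $z=-m^{-1}+(\ratiop/\ration)\int t(1+tm)^{-1}\dif H(t)$, and unwinding $\dif H$ shows this equation is exactly \eqref{eq:mz}, so $m=m_\kappa$ is this (analytic) Stieltjes transform. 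Because $\ratiop>\ration$, the Bai--Yin law gives $\lambda_{\min}(\tfrac1n\vZ_P\vT\vZ_P^\t)\ge\ratiop^{-\kappa}(\sqrt{\ratiop/\ration}-1)^2+\op(1)>0$, so $z=0$ lies a positive distance from the limiting support; hence $m_\kappa$ is analytic at $0$, and as the Stieltjes transform of a positive measure supported in $(0,\infty)$, $m_\kappa(0)>0$ and $m_\kappa'(0)>0$. (Equivalently, these properties are read directly off \eqref{eq:mz}: the map $m\mapsto m^{-1}-\ration^{-1}\int_{\ratiop^{-\kappa}}^{\infty}\big(\kappa t^{1/\kappa}(1+tm)\big)^{-1}\dif t$ tends to $+\infty$ as $m\to0^{+}$ and is asymptotically $(1-\ratiop/\ration)/m<0$ as $m\to\infty$ since $\ratiop>\ration$, hence has a smallest positive root $m_\kappa(0)$, and the implicit function theorem at $z=0$ together with a sign check gives differentiability and $m_\kappa'(0)>0$.) This is the theorem's first assertion.

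To finish I evaluate $T_1$ and $\tr{\vM}$ via resolvent deterministic equivalents at $z=0$. Since $\tfrac{c_N}{n}\vX_P^\t\vX_P$ ($p\times p$, positive semidefinite) has kernel $\ker\vX_P=\operatorname{range}(\vI-\vPi)$, one has $\vI-\vPi=\lim_{z\uparrow0}(-z)(\tfrac{c_N}{n}\vX_P^\t\vX_P-z\vI_p)^{-1}$, so $T_1=\lim_{z\uparrow0}(-z)\tr{\vSigma_P(\tfrac{c_N}{n}\vX_P^\t\vX_P-z\vI_p)^{-1}}$. The deterministic equivalent $-z^{-1}(\vI_p+m_\kappa(z)\vT)^{-1}$ for the resolvent $(\tfrac{c_N}{n}\vX_P^\t\vX_P-z\vI_p)^{-1}$ turns $(-z)\tr{\vSigma_P(\tfrac{c_N}{n}\vX_P^\t\vX_P-z\vI_p)^{-1}}$ into $\sum_{j\le p}\frac{j^{-\kappa}}{1+m_\kappa(z)(j/N)^{-\kappa}}+\op(1)$ uniformly near $z=0$; letting $z\uparrow0$ and passing to a Riemann integral,
\begin{equation*}
T_1\Ipt N^{1-\kappa}\int_0^{\ratiop}\frac{\dif u}{u^{\kappa}+m_\kappa(0)}=N^{1-\kappa}\frac{\ration}{m_\kappa(0)},
\end{equation*}
where the last equality is \eqref{eq:mz} at $z=0$ after the substitution $t=u^{-\kappa}$. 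For $\tr{\vM}$, rescaling gives $\tr{\vM}=n^{-2}\tr{\vW^{-1}\vZ_P\vT^2\vZ_P^\t\vW^{-1}}$ with $\vW:=\tfrac1n\vZ_P\vT\vZ_P^\t$, and since $\tfrac{\partial}{\partial\epsilon}\big(\tfrac1n\vZ_P\vT(\vI+\epsilon\vT)\vZ_P^\t\big)^{-1}\big|_{\epsilon=0}=-\vW^{-1}\big(\tfrac1n\vZ_P\vT^2\vZ_P^\t\big)\vW^{-1}$, one has $\tr{\vM}=-\tfrac1n\tfrac{\partial}{\partial\epsilon}\big|_{\epsilon=0}\tr{(\tfrac1n\vZ_P\vT(\vI+\epsilon\vT)\vZ_P^\t)^{-1}}$; the inner trace converges to $n$ times the companion Stieltjes transform at $0$ for the population spectrum $\vT(\vI+\epsilon\vT)$, and differentiating that fixed-point equation in $\epsilon$ at $0$ and comparing with the $z$-derivative of \eqref{eq:mz} at $z=0$ yields $\tr{\vM}\Ipt m_\kappa'(0)/m_\kappa(0)^2-1$ (finite and positive). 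Adding the three limits,
\begin{equation*}
\bbE_\vtheta[\error]=T_1+T_2+T_3\Ipt N^{1-\kappa}\left(\frac{\ration}{m_\kappa(0)}+\int_{\ratiop}^1 t^{-\kappa}\dif t\cdot\frac{m_\kappa'(0)}{m_\kappa(0)^2}\right)=\mathcal{R}_\kappa(\ratiop),
\end{equation*}
which is \eqref{eq:iptp>n}.

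The main obstacle is the random-matrix input used in the last two steps: standard deterministic-equivalent and local-law results assume a \emph{bounded} population spectrum, whereas here $H$ is supported on all of $[\ratiop^{-\kappa},\infty)$ and has infinite mean when $\kappa\ge1$, and a $1-o(1)$ fraction of the $\lambda_j$ tend to $0$. My plan is to truncate the few largest eigenvalues of $\vT$ at a slowly growing threshold and control the resulting perturbation of each trace functional above; this is feasible precisely because every resolvent is evaluated in a fixed left-neighborhood of $z=0$ that stays a positive distance from the spectrum (since $\ratiop>\ration$), so the relevant resolvent operator norms are $\Op(1)$ and the truncation errors are negligible. Alternatively one can invoke a deterministic-equivalent theorem formulated directly for unbounded population spectra. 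The remaining points are routine: justifying that $z\uparrow0$ and $\partial_\epsilon|_{\epsilon=0}$ commute with the limit $p,n,N\to\infty$ (uniform resolvent control near $0$), and the Chebyshev argument reducing $T_2$ to $T_3\tr{\vM}$.
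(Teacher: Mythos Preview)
Your decomposition $\bbE_\vtheta[\error]=T_1+T_2+T_3$ is exactly the paper's split into ``part 1'' and ``part 2'', and your Chebyshev reduction of $T_2$ to $T_3\cdot\tr{\vM}$ is the same as the paper's Markov step. The target limits you state for $T_1$ and $\tr{\vM}$ are correct and match the paper's (their $\psi+1\ipt m_\kappa'(0)/m_\kappa^2(0)$ is your $\tr{\vM}\ipt m_\kappa'(0)/m_\kappa^2(0)-1$). Where you differ is in the \emph{route} to those limits: you lean on a black-box deterministic equivalent $(\tfrac{c_N}{n}\vX_P^\t\vX_P-z\vI)^{-1}\approx -z^{-1}(\vI+m_\kappa(z)\vT)^{-1}$ and a truncation of the top eigenvalues of $\vT$; the paper instead introduces a vanishing ridge $\mu_n$ and proves the limits \emph{from scratch} by a leave-one-out (Sherman--Morrison) argument, with an auxiliary eigenvalue lemma bounding $\|\vbS_n\|_2=\|(\tfrac1n\vbX_P^\t\vbX_P+\mu_n\vtSigma_P^{-1})^{-1}\|_2$ by $\Op(1/\mu_n)$.

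Your $\epsilon$-differentiation device for $\tr{\vM}$ is clean, but it buys you nothing over the paper on the hard point: it still requires exactly the same uniform control of weighted resolvent traces at $z=0$ with an unbounded $\vT$. The paper's Remark makes this concrete: off-the-shelf results only give $\tfrac{\mu}{n}\tr{\vtSigma_P\vtS_n}=\Op(N^\kappa)$, while you need $\Op(1)$ (and convergence to $1/m_\kappa(0)$). A truncation of $\vT$ at a slowly growing threshold is plausible in spirit, but a crude perturbation bound of the form $\tfrac{\mu}{n}|\tr{(\vtSigma_P-\vT_M)\vtS_n}|\le \tfrac{\mu}{n}\cdot k\cdot N^\kappa\cdot\|\vtS_n\|_2$ is far too weak (already $\Omega(1)$ for a single truncated coordinate when $\kappa\ge1$); you would have to exploit that directions with large $\vtSigma_P$-eigenvalue see small $\vtS_n$-mass, which is precisely what the paper's leave-one-out calculation extracts. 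Likewise, your appeal to ``Bai--Yin'' for $\lambda_{\min}(\tfrac1n\vZ_P\vT\vZ_P^\t)$ is not a direct quote of Bai--Yin (their theorem is for identity population covariance); what actually works is the monotonicity $\vT\succeq\ratiop^{-\kappa}\vI$ together with the standard Wishart smallest-eigenvalue bound, which is essentially how the paper argues via its \Cref{lem:St} and \Cref{lem:eigen}. In short, your plan is correct at the structural level and your identifications are right, but the ``main obstacle'' paragraph is where the real work is, and the paper's self-contained leave-one-out route is both what they do and likely the path of least resistance; if you pursue truncation, expect the perturbation analysis to be at least as delicate.
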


The proof of \Cref{thm:p>n} is sketched in \Cref{sec:proofp>n}, with some details left to \Cref{app:p>n}.

We plot the asymptotic risk function $\mathcal{R}_{\kappa}$ in \Cref{fig:1} for two different values of $\kappa$, both with $\ration = 0.3$.
(In simulations, we find that $\mathbb{E}_{\vtheta}[\error]$ matches these curves very closely for sample sizes as small as $n=300$.)
For both values of $\kappa \in \{1,2\}$, we observe the striking ``double descent'' behavior as found in previous studies~\citep[e.g.,][]{belkin2018reconciling}.
Moreover, we see that the asymptotic risk at $\ratiop=1$ is smaller than the minimum asymptotic risk achieved at any $\ratiop < \ration$.
This, in fact, happens for all values of $\kappa>0$, as we claim in the next \namecref{thm:compare}.

\begin{theorem}\label{thm:compare}
   Assume \A1 with constant $\kappa$, \A2 with constants $\ratiop$ and $\ration$, $\sigma^2 = 0$. Let $\ratiop^*$ be the minimizer of $\mathcal{R}_{\kappa}$ over the interval $\intco{0,\ration}$.
  Then $\limsup_N\mathcal{R}_{\kappa}(1) / \mathcal{R}_{\kappa}(\ratiop^*) < 1$.
  Moreover,
  $\mathcal{R}_{\kappa}(\alpha) / \mathcal{R}_{\kappa}(1) \to \infty$ as $\ratiop \to \ration^{-}$.
\end{theorem}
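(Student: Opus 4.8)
The plan is to reduce everything to one algebraic inequality between $m_\kappa(0)$ and $(\ratiop^*)^\kappa$. Setting $\ratiop=1$ in \eqref{eq:iptp>n} makes the integral term vanish, so $\mathcal{R}_\kappa(1)=N^{1-\kappa}\ration/m_\kappa(0)$, where $m_\kappa(0)$ solves \eqref{eq:mz} at $z=0$ with $\ratiop=1$; and \eqref{eq:minp<n} gives $\mathcal{R}_\kappa(\ratiop^*)=N^{1-\kappa}\ration/(\ratiop^*)^\kappa$. Hence $\mathcal{R}_\kappa(1)/\mathcal{R}_\kappa(\ratiop^*)=(\ratiop^*)^\kappa/m_\kappa(0)$ is independent of $N$, and the first claim is exactly the inequality $(\ratiop^*)^\kappa<m_\kappa(0)$. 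The ``moreover'' part is immediate from \eqref{eq:ipt}: as $\ratiop\to\ration^-$ the factor $\int_\ratiop^1 t^{-\kappa}\dif t$ tends to the finite positive number $\int_\ration^1 t^{-\kappa}\dif t$ (finite since $\ration>0$, positive since $\ration<1$), while $\ration/(\ration-\ratiop)\to\infty$ and $\mathcal{R}_\kappa(1)$ is a fixed positive number.

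To prove $(\ratiop^*)^\kappa<m_\kappa(0)$ I would first simplify the equation for $m_\kappa(0)$. At $z=0$ and $\ratiop=1$, equation \eqref{eq:mz} reads $\ration/m_\kappa(0)=\int_1^\infty (\kappa t^{1/\kappa}(1+t\,m_\kappa(0)))^{-1}\dif t$, and the substitution $t=s^{-\kappa}$ turns this into
\[
  \frac{\ration}{m_\kappa(0)} = \int_0^1\frac{\dif s}{s^\kappa+m_\kappa(0)}.
\]
Set $g(x):=\ration/x-\int_0^1(s^\kappa+x)^{-1}\dif s$ for $x>0$, so that (by the ``smallest positive solution'' convention) $m_\kappa(0)$ is the least positive zero of $g$. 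I claim $g$ has a \emph{unique} positive zero, and hence $g>0$ on $(0,m_\kappa(0))$: one checks $x\int_0^1(s^\kappa+x)^{-1}\dif s\to0$ as $x\to0^+$ (bounding the integral by $(1-\kappa)^{-1}$ when $\kappa<1$, by $\log\frac{1+x}{x}$ when $\kappa=1$, and by $x^{1/\kappa-1}\int_0^\infty(1+v^\kappa)^{-1}\dif v$ when $\kappa>1$), so $g(0^+)=+\infty$; using $s^\kappa\le1$ on $[0,1]$, $g(x)\le\ration/x-(1+x)^{-1}<0$ once $x>\ration/(1-\ration)$; and at any zero $m$ of $g$, $\int_0^1(s^\kappa+m)^{-2}\dif s<m^{-1}\int_0^1(s^\kappa+m)^{-1}\dif s=\ration/m^2$, so $g'(m)<0$. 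A smooth function on $(0,\infty)$ that is $+\infty$ at $0$, eventually negative, and strictly decreasing through each of its zeros has exactly one zero. So it suffices to show $g((\ratiop^*)^\kappa)>0$.

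This is the one explicit calculation. Put $a:=\ratiop^*$; the relation $h_\kappa(a)=0$ of \eqref{eq:h} says $\ration/a=1+\int_a^1 t^{\kappa-2}\dif t$, so $\ration/a^\kappa=a^{1-\kappa}\bigl(1+\int_a^1 t^{\kappa-2}\dif t\bigr)$, while the substitution $s=au$ gives $\int_0^1(s^\kappa+a^\kappa)^{-1}\dif s=a^{1-\kappa}\int_0^{1/a}(1+u^\kappa)^{-1}\dif u$. Hence $g(a^\kappa)$ has the same sign as
\[
  1+\int_a^1 t^{\kappa-2}\dif t-\int_0^1\frac{\dif u}{1+u^\kappa}-\int_1^{1/a}\frac{\dif u}{1+u^\kappa}.
\]
Now $\int_0^1(1+u^\kappa)^{-1}\dif u<1$, and the substitution $t=1/u$ identifies $\int_a^1 t^{\kappa-2}\dif t=\int_1^{1/a}u^{-\kappa}\dif u>\int_1^{1/a}(1+u^\kappa)^{-1}\dif u$ (the interval is nondegenerate since $1/a>1$, as $a=\ratiop^*<\ration<1$). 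Adding these two strict inequalities makes the displayed quantity positive, so $g(a^\kappa)>0$ and therefore $(\ratiop^*)^\kappa<m_\kappa(0)$, which finishes the first assertion.

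I expect the uniqueness-of-zero step for $g$ to be the only real obstacle: $g((\ratiop^*)^\kappa)>0$ on its own merely places $(\ratiop^*)^\kappa$ in the positivity set of $g$, not necessarily to the left of the \emph{smallest} zero, so one genuinely needs the derivative sign $g'(m)<0$ at every zero $m$ — via the elementary bound $(s^\kappa+m)^{-2}<m^{-1}(s^\kappa+m)^{-1}$ — to rule out $g$ dipping below zero and returning. Everything else (the substitutions $t=s^{-\kappa}$, $s=au$, $t=1/u$, and splitting the integral at $1$) is routine once the two defining equations have been written side by side.
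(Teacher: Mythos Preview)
Your proof is correct and follows essentially the same route as the paper's. Both arguments reduce the claim to $(\ratiop^*)^\kappa<m_\kappa(0)$ and verify it by showing the defining function for $m_\kappa(0)$ (at $\ratiop=1$) is strictly positive when evaluated at $(\ratiop^*)^\kappa$, using the uniqueness of the zero to conclude. Your $g(x)$ is just the paper's $q_\kappa(s,1)$ reparametrized by $x=s^\kappa$ (indeed $g(x)=x^{1/\kappa-1}q_\kappa(x^{1/\kappa},1)$, as your own substitution $t=s^{-\kappa}$ shows), and your two inequalities $1>\int_0^1(1+u^\kappa)^{-1}\dif u$ and $\int_1^{1/a}u^{-\kappa}\dif u>\int_1^{1/a}(1+u^\kappa)^{-1}\dif u$ are exactly the content of the paper's monotonicity-of-$h_\kappa-q_\kappa$ argument unpacked at the specific point $s=\ratiop^*$. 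The one genuine variation is your uniqueness proof for the zero of $g$: you argue via ``$g'(m)<0$ at every zero $m$'', whereas the paper computes $\partial_s q_\kappa$ explicitly and reads off the global U-shape; both work.
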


The proof of \Cref{thm:compare} is given in \Cref{sec:compare}.
\Cref{thm:compare} shows that the asymptotic risk exhibits a second decrease somewhere in the $p>n$ regime when $N$ is sufficiently large, and moreover, that it is possible to find a value of $p$ in this $p>n$ regime to achieve a lower asymptotic risk than any $p<n$.

In the noisy setting (see \Cref{app:noise}), it is possible for the asymptotic risk to be dominated by the noise, in which case the minimum asymptotic risk is in fact achieved by $\ratiop = 0$ (i.e., $p = o(n)$). However, there exists a regime with $\sigma^2>0$ in which we have the same conclusion as in \Cref{thm:compare}.

\begin{figure}
  \centering
  \begin{tabular}{cc}
    \includegraphics[width=.5\textwidth]{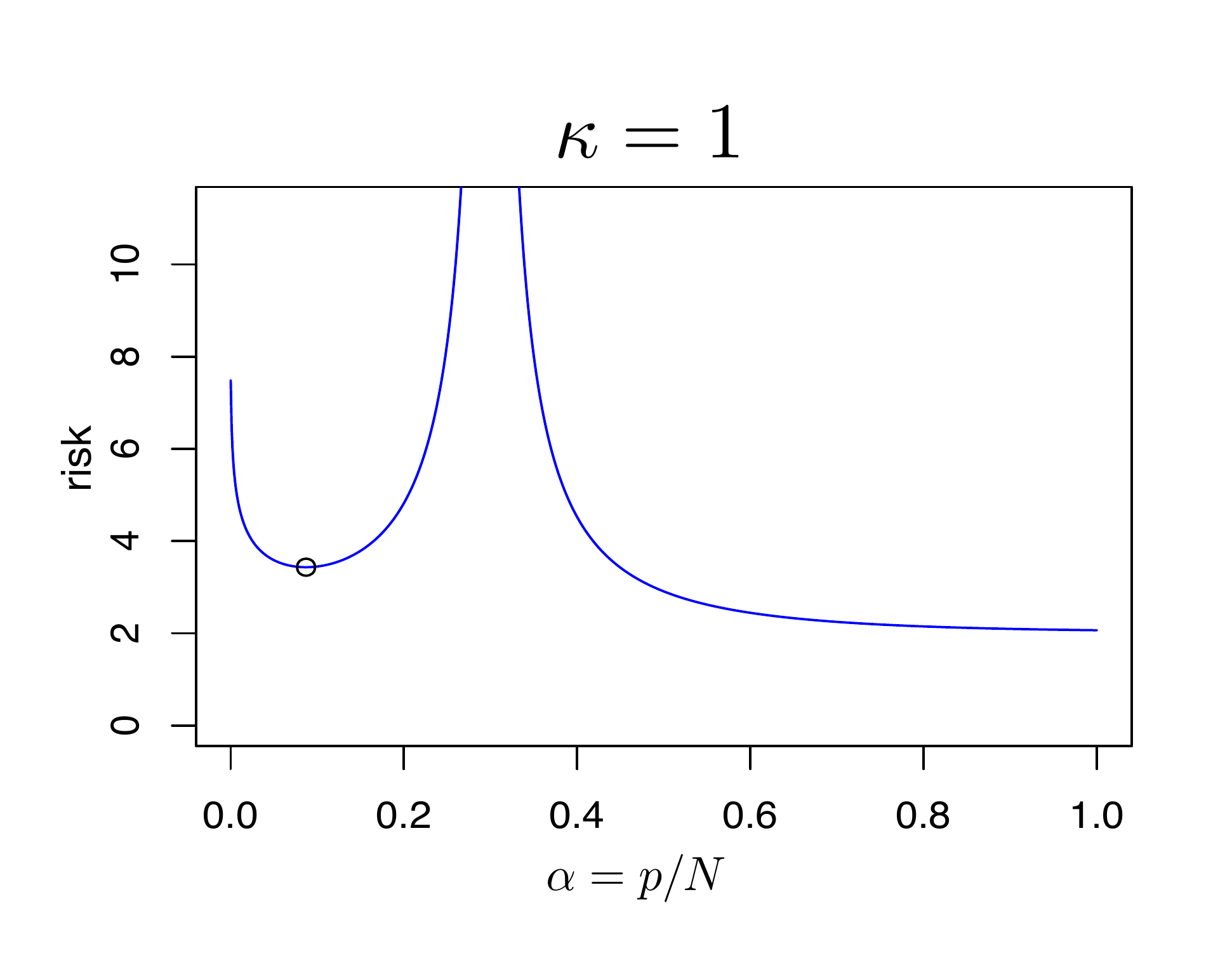} &
    \includegraphics[width=.5\textwidth]{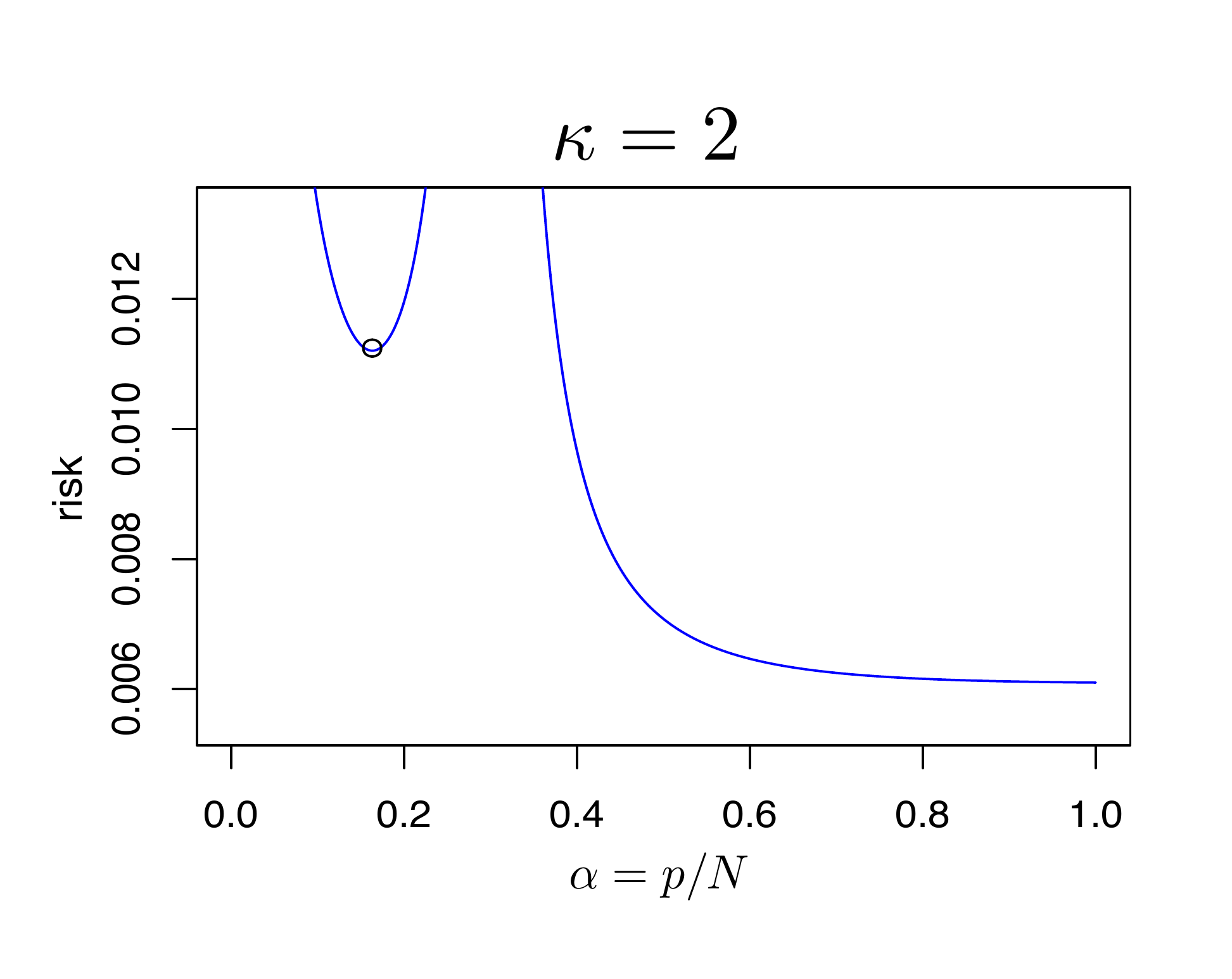}
  \end{tabular}
  \caption{The asymptotic risk function $\mathcal{R}_{\kappa}$ as a function of $\ratiop$ (with $n=300$, $N=1000$, $\ration = n/N = 0.3$ and $\kappa = 1, 2$ respectively).
    The location of $\ratiop^*$ from \Cref{thm:p<n} is marked with a black circle.
    In both cases, the asymptotic risk at $\ratiop=1$ is lower than the asymptotic risk at $\ratiop^*$.}
  \label{fig:1}
\end{figure}

\subsection{Proof sketch for \Cref{thm:p<n}}\label{sec:proofp<n}
We first show that $h_{\kappa}(\ratiop)=0$ has a unique solution on $\intoo{0,\ration}$.
Define $\tilde h_{\kappa}(\ratiop) := \ratiop^{1-\kappa}h_{\kappa}(\ratiop)$.
We shall show that $\tilde h_{\kappa}(\ratiop) = 0$ has a unique solution on $\intoo{0,\ration}$, which in turn immediately implies that $h_{\kappa}(\ratiop) = 0$ also has a unique solution on the same interval.
Observe that
\begin{equation}
	\frac{\dif \tilde h_{\kappa}(\ratiop)}{\dif \ratiop}  =
	\frac{-\kappa\ration +\kappa\ratiop}{\ratiop^{1+\kappa}}
  < 0
	.	\label{eq:hderiv}
\end{equation}
Hence, the function $\tilde h_{\kappa}(\ratiop)$ is strictly decreasing on $\ratiop \in \intoc{0,\ration}$.
Furthermore, we have
\begin{equation}
  \tilde h_{\kappa}(\ratiop) > 0 \ \ \text{as $\ratiop\rightarrow 0^{+}$} , \quad \text{and} \quad \tilde h_{\kappa}(\ratiop) < 0 \ \ \text{at $\ratiop=\ration$} .	\label{eq:p<nunique}
\end{equation}	
Because $\tilde h_{\kappa}$ is continuous, it follows that the equation $\tilde h_{\kappa}(\ratiop)=0$ has a unique solution on $\intoo{0,\ration}$.

We now prove $\bbE_{\vtheta}[\error] \ipt \mathcal{R}_{\kappa}(\ratiop)$.
Since the proof only requires standard techniques, we just sketch the main ideas in this section, and leave the full proof to \Cref{app:p<n}. First, since $\ratiop<\ration$, for large enough $N$, we have $p<n$. Then the prediction error is given by 
\BE
	\error
	&=&
  \bbE_{\vx,y}[(y-\vx^{\t}\hat{\vtheta})^2]
	\=
	\|\vSigmaP^{1/2}\left(\vXP^{\t}\vXP\right)^{-1}\vXP^{\t}\vXpC\vtheta_{P^c}\|^2+\|\vSigmapC^{1/2}\vtheta_{P^c}\|^2
	,	\n
\EE 
where $\vSigmaP\in\bbR^{p\times p}$ and $\vSigmapC\in \bbR^{(N-p)\times (N-p)}$ are two diagonal matrices whose diagonal elements are the first $p$ and last $N-p$ diagonal elements of $\vSigma$, respectively. By \eqref{eq:thetaass}, we have
\BE
  \bbE_{\vtheta}[\error]
	&=&
	\tr{\vXpC^{\t}\vXP\left(\vXP^{\t}\vXP\right)^{-1}\vSigmaP\left(\vXP^{\t}\vXP\right)^{-1}\vXP^{\t}\vXpC}+\tr{\vSigmapC}	
	.	\n
\EE
Note that $\vXpC$ is independent of $\vXP$, thus, given $\vXP$, the trace that includes $\vXpC$ is a sum of $N-p$ independent random variables.
Therefore, we have
\begin{eqnarray*}
  \bbE_{\vtheta}[\error]
	& \ipt &
	\tr{\vSigmapC} \cdot(\tr{(\vXP^{\t}\vXP)^{-1}\vSigmaP}+1)
		\\
  & = &
	\tr{\vSigmapC}\cdot(\tr{(\vbXP^{\t}\vbXP)^{-1}}+1)
  \\
	& \ipt &
	\tr{\vSigmapC} \frac{\ration}{\ration-\ratiop}
	,	\end{eqnarray*}
where $\vbXP := \vXP\vSigmaP^{-1/2}$ is a standard Gaussian matrix.
The first line above uses Markov's inequality to show that $\bbE_{\vtheta}[\error]$ converges in probability to $\bbE_{\vtheta,\vX_{P^c}}[\error]$.
The third line above uses Assumption $\A2$ and the fact that $\vbXP^{\t}\vbXP$ is a standard Wishart matrix $\mathcal{W}_p(\vI,n)$.
So, to prove \eqref{eq:ipt}, we just need to compute $\tr{\vSigmapC}$.
Note that $\int_{s}^{s+1}t^{-\kappa}\dif t<s^{-\kappa}<\int_{s-1}^st^{-\kappa}\dif t$. Hence, we have
\BE
	\int_{p+1}^{N}\frac{N^{\kappa}}{t^{\kappa}}\dif t \cdot \frac1N
	\ <\
	\frac{N^{\kappa}}{N}\sum_{i=p+1}^{N}\frac{1}{i^\kappa}
	\ = N^{\kappa-1}\tr{\vSigmapC}
	\ < \
	\int_{p}^N\frac{N^{\kappa}}{t^{\kappa}}\dif t \cdot \frac1N
	. \label{eq:tracep<nmain}
\EE
Therefore, we have $\tr{\vSigmapC}\rightarrow N^{1-\kappa}\int_{\ratiop}^1t^{-\kappa}\dif t$ as $p\rightarrow \infty$, and thus we have
$\bbE_{\vtheta}[\error] \ipt \mathcal{R}_{\kappa}(\ratiop)$.

Finally, to prove \eqref{eq:minp<n}, we analyze the shape of $\mathcal{R}_{\kappa}(\ratiop)$ to find its minimum value over $\ratiop<\ration$.
We take the derivative of $g_{\kappa}(\ratiop):=N^{\kappa-1}\mathcal{R}_{\kappa}(\ratiop)$:
\begin{equation}
	\frac{\dif g_{\kappa}(\ratiop)}{\dif \ratiop}
  \=
	\ration\cdot\frac{\ratiop^{1-\kappa}-\ration\ratiop^{-\kappa}+\int_\ratiop^1t^{-\kappa}\dif t}{(\ration -\ratiop)^2}
	\= \frac{-\ration\ratiop^{1-\kappa} \cdot h_{\kappa}(\ratiop)}{(\ration -\ratiop)^2}
	.\label{eq:derivp<n}
\end{equation}
Using
\eqref{eq:hderiv} and \eqref{eq:p<nunique}, we deduce that $\mathcal{R}_{\kappa}(\ratiop)$ first decreases and then increases as a function of $\ratiop$ in the interval $\intoo{0,\ration}$.
Therefore, the minimum risk is achieved at the unique solution $\ratiop^*$ of the equation $h_{\kappa}(\ratiop)=0$ over the interval $\intoo{0,\ration}$. Equation \eqref{eq:derivp<n} also implies $\int_{\ratiop^*}^1t^{-\kappa}\dif t = (\ration-\ratiop^*)(\ratiop^*)^{-\kappa}$. Hence, the minimum risk is given by
\begin{equation*}
	\min_{\ratiop<\ration }\ \mathcal{R}_{\kappa}(\ratiop)
	\=
  N^{1-\kappa}\frac{\ration }{\ration -\ratiop^*}\int_{\ratiop^*}^1t^{-\kappa}\dif t
	\=
	N^{1-\kappa}\frac{\ration }{(\ratiop^*)^{\kappa}}.
\end{equation*}

\subsection{Proof sketch for \Cref{thm:p>n}} \label{sec:proofp>n}

We first show that $m_{\kappa}(0)$ is well-defined.
Consider the RHS expression from \Cref{eq:mz} evaluated at $z=0$; by a change-of-variable in the integral, we have
\begin{equation}
  \frac1m - \frac1{\ration} \int_{\ratiop^{-\kappa}}^\infty \frac{\kappa^{-1} t^{-1/\kappa}}{1 + t \cdot m} \dif t
  =
  \frac{1}{\ration \ratiop m^{1-1/\kappa}}\left(\frac{\ration}{m^{1/\kappa}/\ratiop}-\ratiop \int_{m^{1/\kappa}/\ratiop}^{\infty} \frac{t^{\kappa-2}}{1+t^{\kappa}}\dif t\right),
  \label{eq:transformeq}
\end{equation}
where $m=m_{\kappa}(0)$. So, we just need to show that $q_{\kappa}(s,\ratiop)=0$ has a unique solution $s^*_{\kappa}$ for $s$ over the positive real line, where $q_{\kappa}(s,\ratiop)$ is defined by
\BE
q_{\kappa}(s,\ratiop) \ := \ \frac{\ration }{s}-\ratiop\int_s^{\infty}\frac{t^{\kappa-2}}{1+t^{\kappa}}\dif t.\label{eq:q}
\EE
(This makes $m_\kappa(0)$ well-defined, via the equation $s^*_{\kappa} = m_\kappa(0)^{1/\kappa}/\ratiop$, and also verifies its positivity.)

The derivative of $q_{\kappa}(s,\ratiop)$ with respect to $s$ is
\BE
	\frac{\partial q_{\kappa}(s,\ratiop)}{\partial s}
	&=&
	\frac{(\ratiop-\ration )s^{\kappa}-\ration }{s^2(1+s^{\kappa})}
	.	\label{eq:qpartial}
\EE
Hence, since $\ratiop>\ration $, we know the function $q_{\kappa}(s,\ratiop)$ is strictly decreasing on $s\in (0,(\frac{\ration }{\ratiop-\ration })^{1/\kappa}]$ and strictly increasing on $s\in [(\frac{\ration }{\ratiop-\ration })^{1/\kappa}, \infty)$. Furthermore, $q_{\kappa}(s,\ratiop)\rightarrow \infty$ as $s\rightarrow 0$ and $q_{\kappa}(s,\ratiop)\rightarrow 0$ as $s\rightarrow \infty$. Hence, by the continuity of $s \mapsto q_\kappa(s,\ratiop)$, we conclude that $q_{\kappa}(s,\ratiop)=0$ has a unique solution $s^*_{\kappa}$.

Using the chain rule, we can also show that $m_{\kappa}'(0)$ is well-defined, and that its value is given by
\[
  m_{\kappa}'(0)\=\kappa\ration m^2_{\kappa}(0) \cdot (1+(s^*_{\kappa})^{\kappa})/\left(\ration+(\ration-\ratiop)(s^*_{\kappa})^{\kappa}\right) > 0.
\]
We leave the details to \Cref{app:monoto}.

Our next goal is to prove
   $\bbE_{\vtheta}[\error] \Ipt \mathcal{R}_{\kappa}(\ratiop)$.
Since $\ratiop>\ration $, we have $p>n$ for large enough $N$.
In this case,
\BE
	\error
	&=&
  \bbE_{\vx,y}[(y-\vx^{\t}\hat{\vtheta})^2]
	\=
  \bbE_{\vx,y}[(\vx_P^{\t}(\hat{\vtheta}_P-\vtheta_P)-\vx_{P^c}^{\t}\vtheta_{P^c})^2]
		\n\\
	&=&
	\|\vSigmaP^{1/2}((\vPi_{\vXP}-\vI)\vtheta_P+\vXP^{\t}(\vXP\vXP^{\t})^{-1}\vXpC\vtheta_{P^c})\|^2+\|\vSigmapC^{1/2}\vtheta_{P^c}\|^2
	,	\n
\EE 
where $\vPi_{\vXP} := \vXP^{\t}\left(\vXP\vXP^{\t}\right)^{-1}\vXP$, and the diagonal matrices $\vSigmaP$ and $\vSigmapC$ are as defined in \Cref{sec:proofp<n}.
Hence, $\bbE_{\vtheta}[\error]$ is equal to
\begin{equation}
\underbrace{\tr{\vSigmaP(\vI-\vPi_{\vXP})}}_{\text{part 1}}
  + \underbrace{\tr{\vXpC^{\t}(\vXP\vXP^{\t})^{-1}\vXP\vSigmaP\vXP^{\t}(\vXP\vXP^{\t})^{-1}\vXpC} + \tr{\vSigmapC}}_{\text{part 2}} .
	\label{eq:bbEwthetaerrorp>n}
\end{equation} 
We claim that
\BE
	\text{part 1}
	& \ipt &
	\frac{N^{1-\kappa}\ration }{m_{\kappa}(0)}, \quad \text{and}\quad	\text{part 2} 
	\ \ipt \ 
	N^{1-\kappa}\cdot \frac{m_{\kappa}'(0)}{m^2_{\kappa}(0)} \cdot \int_{\ratiop}^1t^{\kappa-2}\dif t+\op(N^{1-\kappa})
	;	\label{eq:p>npart12}
\EE
together, they complete the proof that
   $\bbE_{\vtheta}[\error] \Ipt \mathcal{R}_{\kappa}(\ratiop)$.
Rigorous proofs of the claims in \eqref{eq:p>npart12} are presented in \Cref{app:part1} and \Cref{app:part2}; here, we give a heuristic argument that conveys the main idea.
For part 1, let $\vtSigmaP=N^{\kappa}\vSigmaP$ and $\vtXP=N^{\kappa/2}\vXP$.
This scaling ensures that the empirical eigenvalue distribution of $\vtSigmaP$ has a limiting distribution with probability density
\[ f_{\kappa}(s) =\frac{1}{\kappa\ratiop}s^{-1-1/\kappa}\cdot \ind{s \in [\ratiop^{-\kappa},\infty)} \]
(\Cref{lem:dH} in \Cref{app:part1}).
Also, under this scaling, we have
\BE
	\lefteqn{\tr{\vSigmaP\left(\vI-\vPi_{\vXP}\right)}
	\=
	\lim_{\mu\rightarrow 0}\frac{n}{N^{\kappa}}\left(\frac{1}{n}\tr{\vtSigmaP}-\frac{1}{n}\tr{\vtSigmaP (\vtXP^{\t}\vtXP+\mu n\vI)^{-1}\vtXP^{\t}\vtXP}\right)}
		\n\\
	&=&
	\lim_{\mu\rightarrow 0}\frac{n}{N^{\kappa}}\cdot\frac{\mu}{n}\trace{\vtSigmaP \left(\frac{1}{n}\vtXP^{\t}\vtXP+\mu \vI\right)^{-1}}
	\=
	\lim_{\mu\rightarrow 0}\frac{n}{N^{\kappa}}\cdot\frac{\mu}{n}\tr{\vtSigmaP \vtS_n}
	,	\label{eq:p>neq_20}
\EE
where $\vtS_n := (n^{-1}\vtXP^{\t}\vtXP+\mu \vI)^{-1}$.
As long as the empirical eigenvalue distribution of $\vtSigmaP$ has a limiting distribution with bounded support, we have
\BE
	\forall \mu>0, \quad \mu\cdot\frac{1}{n}\trace{\vtSigmaP \left(\frac{1}{n}\vtXP^{\t}\vtXP+\mu \vI\right)^{-1}}
	\ \ipt \ 
	\frac{1}{m_{\kappa}(-\mu)}
	,	\label{eq:heuristics0}
\EE
where $m_{\kappa}(z)$ is, in fact, the Stieltjes transform of the limiting empirical eigenvalue distribution of $n^{-1}\vtXP\vtXP^{\t}$ (\Cref{lem:St} in \Cref{app:part1}); this follows from results of
\citet{dobriban2018high}, which in turn are derived from the results of \citet{ledoit2011eigenvectors}.
Assume we can exchange the two limits $\mu\rightarrow 0^+$ and $N\rightarrow \infty$, and also that \eqref{eq:heuristics0} still holds for $f_{\kappa}(s)$ which has unbounded support.
Then, from \eqref{eq:p>neq_20}, we conclude
\BE
	 \text{part 1} \= \tr{\vSigmaP\left(\vI-\vPi_{\vXP}\right)} \ \ipt \ \frac{N^{1-\kappa}\ration }{m_{\kappa}(0)}
	.	\n
\EE

For part 2, note that $\vXpC$ is independent of $\vXP$.
Thus, conditional on $\vXP$, part 2 is a sum of $N-p$ independent random variables. Therefore, using Markov inequality, we can show that 
\BE
\text{part 2} &\ipt& \bbE_{\vXpC}[\text{part 2}] \=\trace{\vSigmapC}\cdot\left(\trace{\vSigmaP\vXP^{\t}\left(\vXP\vXP^{\t}\right)^{-2}\vXP}+1\right)
		\n\\
	&=&
	\trace{\vSigmapC}\cdot\left(\lim_{\mu\rightarrow 0}\trace{\vtSigmaP\left(\vtXP^{\t}\vtXP+\mu n\vI\right)^{-1}\vtXP^{\t}\vtXP\left(\vtXP^{\t}\vtXP+\mu n\vI\right)^{-1}}+1\right)
		\n\\
	&=&
	\trace{\vSigmapC}\cdot\left(\lim_{\mu\rightarrow 0}\frac{1}{n}\trace{\vtSigmaP\vtS_n}-\frac{\mu}{n}\trace{\vtSigmaP\vtS_n^2}+1\right)
	.\label{eq:part23_eq1}
\EE
Again, if we ignore the fact that the support of $f_{\kappa}(s)$ is unbounded and assume the limits of $\mu\rightarrow 0$ and $N\rightarrow \infty$ can be exchanged, then by Lemma 7.4 of \citet{dobriban2018high}, we have 
\BE
	\text{part 2} \ \ipt\ \trace{\vSigmapC}\cdot\left(\lim_{\mu\rightarrow 0}\frac{1}{n}\trace{\vtSigmaP\vtS_n}-\frac{\mu}{n}\trace{\vtSigmaP\vtS_n^2}+1\right)\ipt  \trace{\vSigmapC}\cdot\frac{m_{\kappa}'(0)}{m_{\kappa}^2(0)}.	\label{eq:part23_eq2}
\EE
A straightforward analysis of
$\tr{\vSigmapC}$ (as in \eqref{eq:tracep<nmain}) completes the analysis of part 2 of \eqref{eq:p>npart12}.

\begin{remark}
  Although \Cref{thm:p>n} should intuitively hold given the results of \citet{dobriban2018high}, a careful and more involved argument is needed to deal with the facts that $\|\vtSigmaP\|_2\rightarrow \infty$ (since $\|\vSigmaP^{-1}\|_2\rightarrow \infty$) and $\mu\rightarrow 0$.
  For example, standard techniques only imply $\frac{\mu}{n}\tr{\vtSigmaP\vtS_n}=\Op(N^{\kappa})$.
  However, we need the stronger bound $\frac{\mu}{n}\tr{\vtSigmaP\vtS_n} = \Op(1)$ (e.g., \Cref{hardpart}).
\end{remark}

\subsection{Proof of Theorem \ref{thm:compare}}\label{sec:compare}

Comparing the expression for $\mathcal{R}_{\kappa}(\ratiop)$ in \eqref{eq:iptp>n} at $\ratiop=1$ to the expression for $\mathcal{R}_{\kappa}(\ratiop^*)$ in \eqref{eq:minp<n}, we see that it suffices to prove $m_{\kappa}(0)^{1/\kappa}>\ratiop^*$.
Recall that in \Cref{sec:proofp>n}, we have proved $s^*_{\kappa} := m_{\kappa}(0)^{1/\kappa}$ is the unique solution of the equation $q_{\kappa}(s,1)=0$.
Furthermore, using the expression for the derivative of $q_{\kappa}(s,1)$ with respect to $s$ in \eqref{eq:qpartial}, we know that $q(s,1) > 0 \Rightarrow s < s^*_{\kappa}$.
Thus, we only need to show $q_{\kappa}(\ratiop^*,1)>0=h_{\kappa}(\ratiop^*)$, where the equality is due to the definition of $\ratiop^*$ in \Cref{thm:p<n}.
Note that by the definitions of the functions $q_{\kappa}$ and $h_{\kappa}$ in \eqref{eq:h} and \eqref{eq:q}, we have
\BE
	h_{\kappa}(s)
	&=&
	\frac{\ration }{s}-\int_{s}^{1}t^{\kappa-2}\dif t-1 
	\=
	q_{\kappa}(s,1)+\int_s^{\infty}\frac{t^{\kappa-2}}{(1+t^{\kappa})}\dif t-\int_s^1t^{\kappa-2}\dif t-1
	.\n
\EE 
Furthermore, $h_{\kappa}(s) - q_{\kappa}(s,1)$ is increasing in $s$:
\BE
	\frac{\dif~ (h_{\kappa}(s)-q_{\kappa}(s,1))}{\dif s}
	&=&
	-\frac{s^{\kappa-2}}{(1+s^{\kappa})}+s^{\kappa-2}
	\=
	\frac{s^{2\kappa-2}}{1+s^{\kappa}}
	\ >\ 0.\n
\EE
Hence, for all for all $s\in \intoc{0,1}$, we have
\BE
	h_{\kappa}(s)-q_{\kappa}(s,1)
	&\leq&
	h_{\kappa}(1)-q_{\kappa}(1,1)
	\=
	\int_1^{\infty}\frac{t^{\kappa-2}}{(1+t^{\kappa})}\dif t-1
		\n\\
	&=&
	\int_1^{\infty}\frac{t^{\kappa-2}}{(1+t^{\kappa})}\dif t-\int_1^{\infty} \frac{1}{t^2}\dif t
	\=
	-\int_1^{\infty}\frac{1}{t^2(1+t^{\kappa})}\dif t \ <\ 0.\n
\EE
Since $\ratiop^*<\ration <1$, we have $0=h_{\kappa}(\ratiop^*)<q_{\kappa}(\ratiop^*,1)$, and thus we have $s^*_{\kappa}>\ratiop^*$. 

By inspection of the expression for $\mathcal{R}_{\kappa}(\ratiop)$ in \eqref{eq:ipt}, it is also clear that $\mathcal{R}_{\kappa}(\ratiop)/\mathcal{R}_{\kappa}(1)\rightarrow \infty$ as $\ratiop\rightarrow \ration^{-}$.

\section{Analysis under general eigenvalue decay}\label{sec:general}

In this section, we extend the results from \Cref{sec:special} (with noise) to hold under a more general assumption on the eigenvalues of $\vSigma$.
To simplify calculations, we use a slightly different feature selection procedure that includes all components $j$ such that $\lambda_j \geq \nu_N$, so $p=\sum_{j=1}^N \ind{\lambda_j\geq\nu_N}$.

Instead of Assumptions $\A1$ and $\A2$, we assume the following:
\begin{itemize}
\item[$\B1$] $\|\vSigma\|_2 \leq C$ for some constant $C>0$. Also, there exists a positive sequence $(c_N)_{N\geq1}$ such that the empirical eigenvalue distribution of $c_N\vSigma$ converges as $N\to\infty$ to $F=(1-\delta)F_0+\delta F_1$, where $\delta\in (0,1]$, $F_0$ is a point mass of $0$, and $F_1$ has a continuous probability density $f$ supported on either $\intcc{\eta_1,\eta_2}$ or $\intco{\eta_1,\infty}$ for some constants $\eta_1,\eta_2>0$.

\item[$\B2$]
  There exist constants $\nu>0$ and $\ration \in \intoo{0,\delta}$ s.t.\
  $\nu_Nc_N \to \nu$
  and
  $n/N \to \ration$
  as $n,N\to\infty$.
\end{itemize}

The $c_N$ in Assumption $\B1$ generalizes the $N^{\kappa}$ scaling introduced in the proof of \Cref{thm:p>n}.
In fact, Assumption $\B1$ is more general than the eigenvalue assumptions made by \citet{dobriban2018high} and \citet{hastie2019surprises}: the eigenvalues of $\vSigma$ could decrease smoothly ($\delta=1$), or there could be a sudden drop between (say) $\lambda_j$ and $\lambda_{j+1}$ ($\delta<1$).
Since $p$ is now determined by $\nu$, whether $p<n$ or $p>n$ is now determined by whether $\nu>\nu_{b}$ or $\nu<\nu_{b}$, where $\nu_{b}> \eta_1$ is given by the equation $\delta\int_{\nu_{b}}^{\infty} f(t)\dif t = \ration$.
Finally, by Assumption $\B1$,
\BE
\frac{p}{N}\=\frac{1}{N}\sum_{j=1}^N \ind{c_N\lambda_j\geq \nu}\  \ast \ \delta \bbE_{s\sim f}[\ind{s\geq \nu}] \= \delta\int_{\nu}^{\infty} f(t)\dif t \ =: \ \ratiop(\nu), \quad \forall \nu>0. \label{eq:gamma}
\EE
For $\nu=0$, i.e., $\nu_N=o(1/c_{N})$, we choose $\nu_N$ be the $\delta N$ largest eigenvalues of $\vSigma$, then $\ratiop(\nu) = \delta$. Hence, combined with Assumption $\B2$, we have the same asymptotics considered in \Cref{sec:special}, except that $\ration$ is now restricted in $(0,\delta)$. This restriction on $\ration$ is required, otherwise both $c_N\vX^{\t}\vX$ and $c_N\vX\vX^{\t}$ are asymptotically singular.

The following \namecref{thm:general} generalizes the results in \Cref{sec:special} to hold under Assumptions $\B1$ and $\B2$.

\begin{theorem}\label{thm:general}
  Assume $\B1$ with sequence $(c_N)_{N\geq1}$ and constants $C$, $\delta$, $\eta_1$, and $\eta_2$; and $\B2$ with constants $\nu$ and $\ration$.
\begin{itemize}
\item[(i)] Assume $\nu \in (\nu_{b},\infty)$. Then
\BE
\bbE_{\vw,\vtheta}[\error] \ \ipt \ \left( \frac{N}{c_N}\cdot\delta\int_{\eta_1}^{\nu}tf(t)\dif t+\sigma^2\right)\frac{\ration}{\ration-\delta\int_{\nu}^{\infty} f(t)\dif t } \ =: \ \mathcal{R}_f(\nu,\sigma). \label{eq:generalp<nthm_1}
\EE
Define $h_f(\nu) := \nu\ration-\nu\delta\int_{\nu}^{\infty}f(t)\dif t-\delta\int_{\eta_1}^{\nu}tf(t)\dif t$.
If the equation $h_f(\nu)=0$ has a solution on $(\nu_{b},\infty)\bigcap \text{supp}(f)$, then the solution $\nu^*$ is unique, and
\begin{equation}
  \mathcal{R}_f(\nu^*,0) \=
	\min_{\nu \in (\nu_{b},\infty)} \mathcal{R}_f(\nu,0)
	\=
    \frac{N\ration}{c_N}\cdot \nu^*
	.\label{eq:generalminp<n1}
\end{equation}
Otherwise, \begin{equation}
	\inf_{\nu \in (\nu_{b},\infty)} \mathcal{R}_f(\nu,0)
  \=
  \lim_{\nu\to\infty} \mathcal{R}_f(\nu,0)
	\=
	\frac{N}{c_N} \delta\int_{\eta_1}^{\infty}tf(t)\dif t
	.\label{eq:generalminp<n2}
\end{equation}
\item[(ii)]
  Assume $\nu \in \intco{0,\nu_{b}}$.
  Define $q_f(s,\nu) := s\ration - s\delta\int_{\nu}^{\infty} \frac{tf(t)}{s+t}\dif t$.
  Then
\BE
  \bbE_{\vw,\vtheta}[\error]
	&\ipt&
	\frac{N\ration }{c_N} s^*_f+\ration\cdot\frac{\frac{N}{c_N}\delta\int_{\eta_1}^{\nu}tf(t)\dif t+\sigma^2}{\delta s^*_f\int_{\nu}^{\infty}\frac{tf(t)}{(s^*_{f}+t)^2}\dif t} \ =: \ \mathcal{R}_f(\nu,\sigma)
	,	\label{eq:generalriskp=N}
\EE
where $s^*_f$ is the unique solution of the equation $q_{f}(s,\nu)=0$.

\item[(iii)]Suppose $\sigma=0$. Let $\nu^*$ be the minimizer of $\mathcal{R}_{f}(\nu, 0)$ over the interval $(\nu_b,\infty]$ (including $\infty$). Let $\mathcal{R}_f(\eta_1,0)$ be the risk achieved at $\nu=\eta_1$. Then $\limsup_N\mathcal{R}_{f}(\eta_1,0) / \mathcal{R}_{f}(\nu^*, 0) < 1$.

\end{itemize}
\end{theorem}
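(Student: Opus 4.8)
Parts (i) and (ii) follow the same route as the proof sketches of \Cref{thm:p<n} and \Cref{thm:p>n}, with the general limiting spectral distribution $F$ replacing the (scaled) polynomial eigenvalue profile. For (i) (here $p<n$ since $\nu>\nu_{b}$), after whitening $\vbXP=\vXP\vSigmaP^{-1/2}$ one sees $\bbE_{\vw,\vtheta}[\error]$ concentrates, by Markov's inequality and $\vXpC\perp\vXP$, around $\bigl(\tr{(\vbXP^\t\vbXP)^{-1}}+1\bigr)\bigl(\tr{\vSigmapC}+\sigma^2\bigr)$; the inverse-Wishart trace tends to $\ratiop/(\ration-\ratiop)$ and $\tr{\vSigmapC}\to\frac{N}{c_N}\delta\int_{\eta_1}^{\nu}tf(t)\dif t$, which gives \eqref{eq:generalp<nthm_1}, and differentiating $\mathcal{R}_f(\nu,0)$ in $\nu$ and relating the derivative to $h_f$ (as in \eqref{eq:derivp<n}) gives the minimizer statements. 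For (ii) (here $p>n$) one splits $\bbE_{\vw,\vtheta}[\error]$ into ``part 1'' $+$ ``part 2'' as in \eqref{eq:bbEwthetaerrorp>n}, sets $\vtSigmaP=c_N\vSigmaP$, and invokes the random-matrix results of \citet{dobriban2018high} (built on \citet{ledoit2011eigenvectors}) to evaluate $\frac{\mu}{n}\tr{\vtSigmaP\vtS_n}$ and $\frac{\mu}{n}\tr{\vtSigmaP\vtS_n^2}$ in terms of the Stieltjes transform of the limiting spectrum of $n^{-1}\vtXP\vtXP^\t$, with the root $s^*_f$ of $q_f(\cdot,\nu)$ playing the role of $s^*_\kappa$; the care with $\mu\to0^+$ and $\|\vtSigmaP\|_2\to\infty$ flagged in the remark after \Cref{thm:p>n} is again needed.

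For (iii) --- the analogue of \Cref{thm:compare} --- the plan is to reduce the claim to a one-dimensional inequality. Since $\eta_1<\nu_{b}$, the choice $\nu=\eta_1$ lies in $\intco{0,\nu_{b}}$, so part (ii) applies and $\ratiop(\eta_1)=\delta$, the largest attainable selection. In \eqref{eq:generalriskp=N} the term $\frac{N}{c_N}\delta\int_{\eta_1}^{\eta_1}tf(t)\dif t$ vanishes, so with $\sigma=0$ one gets $\mathcal{R}_f(\eta_1,0)=\frac{N\ration}{c_N}\,s^*_f$, where $s^*_f>0$ is the unique root of $q_f(\cdot,\eta_1)$, i.e.\ $\ration=\delta\int_{\eta_1}^{\infty}\frac{tf(t)}{s^*_f+t}\dif t$. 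By part (i), the minimizer $\nu^*$ over $\intoc{\nu_{b},\infty}$ is either the unique root of $h_f$ in $(\nu_{b},\infty)\cap\text{supp}(f)$, with $\mathcal{R}_f(\nu^*,0)=\frac{N\ration}{c_N}\nu^*$, or $\nu^*=\infty$, with $\mathcal{R}_f(\nu^*,0)=\frac{N}{c_N}\delta\int_{\eta_1}^{\infty}tf(t)\dif t$. In the ratio $\mathcal{R}_f(\eta_1,0)/\mathcal{R}_f(\nu^*,0)$ the factor $N/c_N$ cancels (and, in the first case, so does $\ration$), so it suffices to bound a fixed constant strictly below $1$.

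If $\nu^*=\infty$, the ratio equals $\ration\,s^*_f\big/\bigl(\delta\int_{\eta_1}^{\infty}tf(t)\dif t\bigr)$; substituting the equation for $s^*_f$ gives $\ration\,s^*_f=\delta\int_{\eta_1}^{\infty}\frac{s^*_f}{s^*_f+t}\,tf(t)\dif t<\delta\int_{\eta_1}^{\infty}tf(t)\dif t$, since $s^*_f/(s^*_f+t)<1$ (and the right-hand integral is finite precisely because the limit in \eqref{eq:generalminp<n2} exists). If $\nu^*<\infty$, the ratio equals $s^*_f/\nu^*$; since $s\mapsto\ration-\delta\int_{\eta_1}^{\infty}\frac{tf(t)}{s+t}\dif t$ is strictly increasing and vanishes at $s^*_f$, it is enough to show it is positive at $\nu^*$, i.e.\ $\ration>\delta\int_{\eta_1}^{\infty}\frac{tf(t)}{\nu^*+t}\dif t$. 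I would rewrite $\ration=\delta\int_{\nu^*}^{\infty}f(t)\dif t+\frac{\delta}{\nu^*}\int_{\eta_1}^{\nu^*}tf(t)\dif t$ using $h_f(\nu^*)=0$, split the target integral at $\nu^*$, and bound it piecewise: on $\intcc{\eta_1,\nu^*}$ use $\frac{1}{\nu^*+t}<\frac{1}{\nu^*}$, and on $\intco{\nu^*,\infty}$ use $\frac{t}{\nu^*+t}<1$; adding these (strict) inequalities reproduces exactly the displayed expression for $\ration$, giving the desired strict inequality.

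The remaining points for (iii) are routine: confirming, as in \Cref{sec:compare}, that the minimizer in part (i) really is the root of $h_f$ when one exists and is $\nu^*=\infty$ otherwise (equivalently, $\mathcal{R}_f(\nu,0)$ is decreasing-then-increasing on $(\nu_{b},\infty)$), and checking strictness of the piecewise bounds at degenerate endpoints --- if $\text{supp}(f)=\intcc{\eta_1,\eta_2}$ and $\nu^*=\eta_2$, the bound on $\intco{\nu^*,\infty}$ is only the equality $0=0$, but the bound on $\intcc{\eta_1,\nu^*}$ stays strict because $\intcc{\eta_1,\nu^*}$ carries positive $f$-mass ($\nu^*>\nu_{b}>\eta_1$), so the combined inequality is still strict. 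The genuine technical obstacle lies not in (iii) but in parts (i)--(ii): pushing the random-matrix estimates of \citet{dobriban2018high} through the regime $\|\vtSigmaP\|_2\to\infty$ with $\mu\to0^+$ --- in particular obtaining $\Op(1)$ control on quantities such as $\frac{\mu}{n}\tr{\vtSigmaP\vtS_n}$, for which the standard bound gives only $\Op(\|\vtSigmaP\|_2)$.
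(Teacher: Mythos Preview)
Your proposal is correct and follows essentially the same route as the paper: parts (i) and (ii) are handled exactly as you describe (the paper also splits $\bbE_{\vw,\vtheta}[\error]$ into the same parts, rescales by $c_N$, and pushes through the leave-one-out arguments of \Cref{app:p>n} with $\mu_n=\min(1/\sqrt{N},o(1/c_N))$), and for (iii) the paper likewise reduces to showing $s^*_f<\nu^*$ (respectively $\ration s^*_f<\delta\int_{\eta_1}^{\infty}tf(t)\dif t$) and proves this by the very same splitting of the integral at $\nu^*$ and the same two pointwise bounds $\frac{t}{\nu^*+t}<1$ and $\frac{1}{\nu^*+t}<\frac{1}{\nu^*}$ you use. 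The only cosmetic difference is that the paper packages the inequality as $h_f(\nu)-q_f(\nu,\eta_1)<0$ for all $\nu\geq\eta_1$ before specializing to $\nu=\nu^*$, whereas you substitute $h_f(\nu^*)=0$ first and then compare; your extra endpoint check at $\nu^*=\eta_2$ is a nice touch the paper omits.
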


The proof of this theorem is presented in Appendix \ref{app:general}.

\section{Discussion}

Our results confirm the emergence of the ``double descent'' risk curve in a natural setting with Gaussian design.
As in previous works~\citep[e.g.,][]{belkin2019two,hastie2019surprises,muthukumar2019harmless}, the shape emerges when there is a spike at the interpolation threshold ($p=n$), which is typically caused by a near-zero minimum eigenvalue of the empirical covariance matrix.

More importantly, however, our results shed light on when the minimum risk is achieved before or after the interpolation threshold in terms of the noise level and eigenvalues of the (population) covariance matrix.
For instance, when the eigenvalues decay very slowly or not at all ($\kappa<1$), a smaller risk is achieved after the interpolation threshold ($p>n$) than any point before ($p<n$).
On the other hand, when the eigenvalues decay more quickly ($\kappa>1$), a smaller risk is achieved in the $p>n$ regime only in the noiseless setting.
In general, the $p<n$ regime yields a smaller risk when the noise dominates the error due to model misspecification.
Providing a full characterization is an important direction for future research.

Finally, we point out that the PCR estimator we study is a non-standard ``oracle'' estimator because it generally requires knowledge of $\vSigma$.
Although it can be plausibly implemented in a semi-supervised setting (by estimating $\vSigma$ very accurately using unlabeled data), a full analysis that accounts for estimation errors in $\vSigma$, or of a more standard PCR estimator, remains open.
However, we note that the PCR estimator with $p=N$ can be implemented, and in our analysis, the dominance of the $p>n$ regime is always established at $p=N$.
We believe that this should be true for the standard PCR estimator as well.

\subsection*{Acknowledgments}
This research was supported by NSF CCF-1740833, a Sloan Research Fellowship, a Google Faculty Award, and a Cheung-Kong Graduate School of Business Fellowship.

\bibliographystyle{plainnat}
\begin{appendix}
\section{Proof of \Cref{thm:p<n}}\label{app:p<n}

The full proof for \Cref{thm:p<n} is presented in this section. Since $\ratiop<\ration$, we have $p<n$ hold for large enough $N$. Then, the least square estimate $\hat{\vtheta}_P$ is given by $\left(\vXP^{\t}\vXP\right)^{-1}\vXP^{\t}\vX\vtheta$ and the prediction error is given by
\BE
	\error
	&=&
  \bbE_{\vx,y}[(y-\vx^{\t}\hat{\vtheta})^2]
	\=
  \bbE_{\vx,y}[(\vx_P^{\t}(\vtheta_P-\hat{\vtheta}_P)+\vx_{P^c}^{\t}\vtheta_{P^c})^2]
		\n\\
	&=&
	\|\vSigmaP^{1/2}(\vXP^{\t}\vXP)^{-1}\vXP^{\t}\vXpC\vtheta_{P^c}\|^2+\|\vSigmapC^{1/2}\vtheta_{P^c}\|^2
	,	\n
\EE 
where $\vSigmaP\in\bbR^{p\times p}$ and $\vSigmapC\in \bbR^{(N-p)\times (N-p)}$ are the two diagonal matrices whose diagonal elements are the first $p$ and last $N-p$ diagonal elements of $\vSigma$ respectively. By our assumption on $\vtheta$, we have
\BE
\bbE_{\vtheta}[\error]
	&=&
	\tr{\vXpC^{\t}\vXP(\vXP^{\t}\vXP)^{-1}\vSigmaP(\vXP^{\t}\vXP)^{-1}\vXP^{\t}\vXpC}+\tr{\vSigmapC}	
.	\n
\EE
Our next step is to apply Markov inequality to show \eqref{eq:ipt}. Note that $\vXpC$ is independent of $\vXP$. Hence, the expectation of $\error$ given $\vXP$ is the following:
\BE
	\bbE[\error \mid \vXP]
	&=&
	\tr{\vSigmapC}\cdot(\tr{(\vXP^{\t}\vXP)^{-1}\vSigmaP}+1)
		\n\\
	&=&
	\tr{\vSigmapC}\cdot(\tr{(\vbXP^{\t}\vbXP)^{-1}}+1)
	,	\label{eq:p<nexpect}
\EE
where $\vbXP=\vXP\vSigmaP^{-\frac{1}{2}}$.
(The expectation only conditions on $\vXP$; in particular, it averages over $\vXpC$.)
Further, the variance of $\error$ given $\vXP$ is the following:
letting $\vz \sim \Normal(\v0,\vI)$,
\BE
  \lefteqn{\var(\error \mid \vXP)}
  \n
  \\
  & = &
	\tr{\vSigmapC^2}\var(\vz^{\t}\vXP(\vXP^{\t}\vXP)^{-1}\vSigmaP(\vXP^{\t}\vXP)^{-1}\vXP^{\t}\vz \mid \vXP)
		\n\\
	&\leq&2
	\tr{\vSigmapC^2}\|\vXP(\vXP^{\t}\vXP)^{-1}\vSigmaP(\vXP^{\t}\vXP)^{-1}\vXP^{\t}\|_{F}^2
		\n\\
	&=&
	2\tr{\vSigmapC^2}\tr{(\vXP^{\t}\vXP)^{-1}\vSigmaP(\vXP^{\t}\vXP)^{-1}\vSigmaP} \n \\
  & = &
	2\tr{\vSigmapC^2}\tr{(\vbXP^{\t}\vbXP)^{-2}}
	.	\n
\EE  
Hence, by Markov's inequality and the fact that $\tr{\vSigmapC^2}\leq \tr{\vSigmapC}^2$, we have
\BE
\bbE_{\vtheta}[\error]
	=
  \bbE[ \error|\vXP]\cdot \del{ 1+\Op\del{\tr{(\vbXP^{\t}\vbXP)^{-2}}^{1/2}\cdot \del{ \tr{(\vbXP^{\t}\vbXP)^{-1}}+1 }^{-1}} }
	. 	\label{eq:markov}
\EE
Our next step is to simplify \eqref{eq:markov}. Note that $\vbXP$ is a standard Gaussian matrix. Hence, when $\ratiop>0$, from (2.104) and (2.105) of \cite{tulino2004random}, we know
\BE
	\frac{n}{p}\trace{\left(\vbXP^{\t}\vbXP\right)^{-1}}
	\ \ast \ 
	\frac{\ration}{\ration -\ratiop}
	\qand	
	\frac{n^2}{p}\cdot \trace{\left(\vbXP^{\t}\vbXP\right)^{-2}}
	\ \ast \
	\frac{\ration^3}{(\ration -\ratiop)^3}
	.	\n
\EE
When $\ratiop=0$, i.e., $p=o(n)$, from (2.110) and (2.111) of \cite{tulino2004random}, we know 
\BE
	\frac{n}{p}\trace{\left(\vbXP^{\t}\vbXP\right)^{-1}}
	\ \ast \
	1	\qand	
	\frac{n^2}{p} \trace{\left(\vbXP^{\t}\vbXP\right)^{-2}}
	\ \ast \
	1.	\n
\EE
Therefore, with \eqref{eq:p<nexpect} and \eqref{eq:markov}, we have for all $\ratiop<\ration$,
\BE
  \bbE_{\vtheta}[\error]
	&=&
	\bbE[ \error|\vXP]\cdot \left(1+\Op\left(\sqrt{\frac{\ration \ratiop(\ration -\ratiop)^{-3}}{N\left(\frac{\ratiop}{\ration -\ratiop}+1\right)^2}}\right)\right) \n
  \\
  & = &
	 \bbE[ \error|\vXP]\cdot \left(1+\Op\left(\frac{1}{\sqrt{N}}\right)\right)
		\n\\
  & \ipt &
	\trace{\vSigmapC}\cdot\left(\trace{\left(\vbXP^{\t}\vbXP\right)^{-1}}+1\right)
	\ \ipt \
	\trace{\vSigmapC}\cdot\frac{\ration }{\ration -\ratiop}. \label{eq:ipta>0}
\EE
Our final step is to analyze $\trace{\vSigmapC}$.
Note that $\int_{s}^{s+1}t^{-\kappa}\dif t<\frac{1}{s^\kappa}<\int_{s-1}^st^{-\kappa}\dif t$.
Hence, we have
\BE
	\int_{p+1}^{N}\frac{N^{\kappa}}{t^{\kappa}}\dif t/N
	\ <\
	\frac{N^{\kappa}}{N}\sum_{i=p+1}^{N}\frac{1}{i^\kappa}
	\ = N^{\kappa-1}\trace{\vSigmapC}
	\ < \
	\int_{p}^N\frac{N^{\kappa}}{t^{\kappa}}\dif t/N
	. \label{eq:tracep<n2}
\EE
Therefore, we know $\trace{\vSigmapC}\rightarrow N^{1-\kappa}\int_{\ratiop}^1t^{-\kappa}\dif t$ as $p\rightarrow \infty$ and thus \eqref{eq:ipt} holds.

\section{Proof of \Cref{thm:p>n}}\label{app:p>n}

\subsection{Existence and positivity of $m_{\kappa}'(0)$}\label{app:monoto}

We already showed in \Cref{sec:proofp>n} that $m_{\kappa}(0)$ is well-defined.
We now show that $m_{\kappa}(z)$ is well-defined in a neighborhood of $z=0$, which we can then use to establish the existence and positivity of $m_{\kappa}'(0)$.
Note that, in fact, \Cref{lem:St} in \Cref{app:part1} shows that $m_{\kappa}(z)$ is the Stieltjes transform of a distribution, specifically the limiting distribution of the empirical eigenvalue distribution of $\vSigmaP$.
This \namecref{lem:St}, which is proved in \Cref{app:St}, establishes the existence of the Stieltjes transform for all $z\leq 0$.
Here, we just give the arguments needed to show the existence of $m_{\kappa}'(0)$.

Define 
\[
    z_{\kappa}(m) \ :=\ -\frac{1}{m}+\frac{1}{\ration}\int_{\ratiop^{-\kappa}}^{\infty}\frac{1}{\kappa t^{1/\kappa}(1+t \cdot m)}\dif t.
\]
Based on \eqref{eq:mz}, we can consider $z_{\kappa}(m)$ to be the inverse of $m_{\kappa}(z)$ wherever $m_{\kappa}(z)$ exists.
Then, note that
\BE
    \frac{\dif z_{\kappa}(m)}{\dif m}
    &=&
    \frac{1}{m^2} - \frac{1}{\ration}\int_{\ratiop^{-\kappa}}^{\infty}\frac{t^2}{\kappa t^{1+1/\kappa}(1+t \cdot m)^2}\dif t.\n
\EE
Hence, we have
\[
   \frac{\dif z_{\kappa}(m)}{\dif m}\ \gtreqless\ 0\quad \Leftrightarrow \quad 1 \ \gtreqless \  \frac{1}{\ration}\int_{\ratiop^{-\kappa}}^{\infty}\frac{t^2}{\kappa t^{1+1/\kappa}(m^{-1}+t)^2}\dif t. 
\]
Note that $\frac{1}{\ration}\int_{\ratiop^{-\kappa}}^{\infty}\frac{t^2}{\kappa t^{1+1/\kappa}(m^{-1}+t)^2}\dif t$ is a increasing function of $m$ with \BE
    \frac{1}{\ration}\int_{\ratiop^{-\kappa}}^{\infty}\frac{t^2}{\kappa t^{1+1/\kappa}(m^{-1}+t)^2}\dif t&\rightarrow& 0 \quad \text{as}\ m\rightarrow 0 ; \n \\
   \frac{1}{\ration}\int_{\ratiop^{-\kappa}}^{\infty}\frac{t^2}{\kappa t^{1+1/\kappa}(m^{-1}+t)^2}\dif t&\rightarrow& \frac{1}{\ration} > 1 \quad \text{as}\ m\rightarrow \infty.\n 
\EE
Hence, there exists a constant $m_c$ such that for all $0<m<m_c$, the function $z_{\kappa}(m)$ is increasing on the interval $\intoo{0,m_c}$ and decreasing on $\intoo{m_c,\infty}$.
Furthermore, note that
\BE
    m\cdot z_{\kappa}(m)&=&\frac{1}{\ration}\int_{\ratiop^{-\kappa}}^{\infty}\frac{1}{\kappa t^{1/\kappa}(m^{-1}+t)}\dif t-1.
\EE
Evaluating this integral as $m \to 0^+$ and as $m \to +\infty$ shows that
\BE
  m\cdot z_{\kappa}(m)
  & \to &
  \begin{cases}
    -1 & \text{as $m \to 0^+$} , \\
    \frac1\ration - 1 > 0 & \text{as $m \to +\infty$} ,
  \end{cases}
\EE
which in turn implies
\BE
  z_{\kappa}(m)
  & \to &
  \begin{cases}
    -\infty & \text{as $m \to 0^+$} , \\
    0 & \text{as $m \to +\infty$} .
  \end{cases}
\EE
Therefore, $z_{\kappa}(m)$ is strictly increasing on $z\leq 0$. Further, for $z\in [0, z_{\kappa}(m_c)]$, there are two only solutions of $m$ satisfying \eqref{eq:mz}. Therefore, since $m_{\kappa}(z)$ is defined to be the smallest positive solution of \eqref{eq:mz}, the mapping between $z\in (-\infty, z_{\kappa}(m_c)]$ and $m\in (0, m_c]$ defined by $z_{\kappa}(m)$ and $m_{\kappa}(z)$ is continuous, one-to-one, and $z_{\kappa}(m_c) > 0$.
This shows that $m_{\kappa}(z)$ is well-defined and continuous at $z=0$.
Then, by continuity of the defining expression, we conclude that $m_{\kappa}'(0)$ exists.

Next, we use the chain rule to calculate the value of $m_{\kappa}'(0)$.
From the definition of $m_{\kappa}$ in \eqref{eq:mz}, the change-of-variable in \eqref{eq:transformeq}, and the definition of $q_{\kappa}$ in \eqref{eq:q}, we have
\BE
	-z&=&\frac{1}{\ration\ratiop\cdot m_{\kappa}(z)^{1-1/\kappa}} \cdot q_{\kappa}\left(\frac{m_{\kappa}(z)^{1/\kappa}}{\ratiop},\ratiop\right) \label{eq:monotoeq1}
\EE
for $z$ in a neighborhood of $z=0$.
Also, from the analysis in Section \ref{sec:proofp>n}, we have $m_{\kappa}(0)=(s^*_{\kappa}\ratiop)^{\kappa}$ and $q_{\kappa}(s^*_{\kappa},\ratiop)=0$. Then, taking the derivative with respect to $z$ on both sides of \eqref{eq:monotoeq1} and with the chain rule, we have
\BE
	-1&=&\left(\frac{1}{\kappa}-1\right) \cdot \frac{m_{\kappa}(z)^{1/\kappa-2}}{\ration\ratiop} \cdot q_{\kappa}\left(\frac{m_{\kappa}(z)^{1/\kappa}}{\ratiop},\ratiop\right) \n \\
    && +\frac{1}{\ration\ratiop m_{\kappa}(z)^{1-1/\kappa}} \cdot \frac{\partial q_{\kappa}\left(s,\ratiop\right)}{\partial s}\Big|_{s=\frac{m_{\kappa}(z)^{1/\kappa}}{\ratiop}}\cdot\frac{m_{\kappa}(z)^{1/\kappa-1}}{\kappa\ratiop}\cdot m_{\kappa}'(z).\n
\EE
Hence, plugging in $z=0$ and solving for $m_{\kappa}'(0)$ gives
\[
	m_{\kappa}'(0)\=\frac{\kappa\ration\ratiop^2 (m_{\kappa}(0))^{2-2/\kappa}}{-\frac{\partial q_{\kappa}(s,\ratiop)}{\partial s}\big|_{s=s^*_{\kappa}}}
\]
Then, using the formula for the derivative of $q_{\kappa}$ in \eqref{eq:qpartial}, we have
\BE
	m_{\kappa}'(0)\=\kappa\ration m^2_{\kappa}(0)\cdot\frac{1+(s^*_{\kappa})^{\kappa}}{\ration-(\ratiop-\ration)(s^*_{\kappa})^{\kappa}}
	.\label{eq:m**}
\EE
Since $(s^*_\kappa)^\kappa < \ration/(\ratiop-\ration)$ (recall the argument in \Cref{sec:proofp>n} following Equation~\eqref{eq:qpartial}), it follows that $m_{\kappa}'(0) > 0$.

\subsection{Analysis of part 1} \label{app:part1}

In this section, we will prove that
\BE
	\trace{\vSigmaP\left(\vI-\vPi_{\vXP}\right)}
	&\Ipt&
	\frac{N^{1-\kappa}\ration}{m_{\kappa}(0)}
	.	\label{eq:p>npart1}
\EE
(The existence and uniqueness of $m^*_{\kappa} := m_{\kappa}(0)$ is proved in the beginning of \Cref{sec:proofp>n}.)
Let $\vtSigmaP=N^{\kappa}\vSigmaP$ and $\vtXP=N^{\kappa/2}\vXP$, then we have, for all $\mu > 0$,
\BE
	\trace{\vSigmaP\left(\vI-\vPi_{\vXP}\right)}
	&=&
	\frac{n}{N^{\kappa}}\left(\frac{1}{n}\trace{\vtSigmaP}-\frac{1}{n}\trace{\vtSigmaP \vtXP^{\t}\left(\vtXP\vtXP^{\t}\right)^{-1}\vtXP}\right)
		\n\\
	&=&
	\frac{n}{N^{\kappa}}\left(\frac{1}{n}\trace{\vtSigmaP}-\frac{1}{n}\trace{\vtSigmaP \left(\vtXP^{\t}\vtXP+\mu n\vI\right)^{-1}\vtXP^{\t}\vtXP}+\epsilon_{\mu_n}\right)
		\n\\
	&=&
	\frac{n}{N^{\kappa}}\left(\mu\cdot\frac{1}{n}\trace{\vtSigmaP \left(\frac{1}{n}\vtXP^{\t}\vtXP+\mu \vI\right)^{-1}}+\epsilon_{\mu_n}\right)
	, \label{eq:p>neq_2}
\EE
where $\epsilon_{\mu_n}$ is given by
\[
	\epsilon_{\mu_n}
	\ := \
	\frac{1}{n}\trace{\vtSigmaP\left(\vtXP^{\t}\vtXP+n\mu\vI\right)^{-1}\vtXP^{\t}\vtXP}-\frac{1}{n}\trace{\vtSigmaP\vtXP^{\t}\left(\vtXP\vtXP^{\t}\right)^{-1}\vtXP}
	.	
\] 
Since $n/N^{\kappa} \to N^{1-\kappa}\ration$, the claim in \eqref{eq:p>npart1} is implied by
\[
	\mu\cdot\frac{1}{n}\trace{\vtSigmaP \left(\frac{1}{n}\vtXP^{\t}\vtXP+\mu \vI\right)^{-1}}+\epsilon_{\mu_n}
	\=
	\frac{1}{m_{\kappa}(0)}+\op(1)
	.
\]
Hence, our task is reduced to finding a suitable positive sequence $(\mu_n)_{n\geq1}$ such that the following hold:
\BE
	|\epsilon_{\mu_n}|
	&=& 
	\op(1)
	,\label{eq:p>ngoal1}
\EE
and
\BE
	\mu_n\cdot\frac{1}{n}\trace{\vtSigmaP \left(\frac{1}{n}\vtXP^{\t}\vtXP+\mu_n \vI\right)^{-1}} &\ipt& \frac{1}{m_{\kappa}(0)}
	.	\label{eq:p>ngoal2}
\EE
With foresight, we shall assume that
\[ \mu_n<\min\left\{\frac{1}{\sqrt{N}},o(N^{-\kappa})\right\} . \]

\subsubsection{Proof of Equation~\eqref{eq:p>ngoal1}}

Let us first show \eqref{eq:p>ngoal1}.
Towards this end, we bound $|\epsilon_{\mu_n}|$ as follows:
\BE
	|\epsilon_{\mu_n}|
	&=&
	\frac{1}{n}\left|\trace{\vtSigmaP\left(\left(\vtXP^{\t}\vtXP+\mu_n n\vI\right)^{-1}\vtXP^{\t}\vtXP-\vtXP^{\t}\left(\vtXP\vtXP^{\t}\right)^{-1}\vtXP\right)}\right|
		\n\\
	&\stackrel{\text{(i)}}{\leq}&
	\frac{1}{n}\|\vtSigmaP\|_2\trace{\vtXP^{\t}\left(\vtXP\vtXP^{\t}\right)^{-1}\vtXP-\left(\vtXP^{\t}\vtXP+\mu_n n\vI\right)^{-1}\vtXP^{\t}\vtXP}
		\n\\
	&\leq&
	\frac{N^{\kappa}}{n}\cdot \sum_{i=1}^n\frac{\mu_n}{\tsigma_i+\mu_n}
	\=
	N^{\kappa}\cdot \mu_n\cdot m_n(-\mu_n)
	\ \leq \
	N^{\kappa}\cdot \frac{\mu_n}{\min_i(\tsigma_i)}
	,	\label{eq:elambda}
\EE
where $\tsigma_i$ is the $i$-th eigenvalue of $\frac{1}{n}\vtXP\vtXP^{\t}$ and $m_n(z)$ is the Stieltjes transform of the empirical eigenvalue distribution of $\frac{1}{n}\vtX\vtX^{\t}$. Inequality (i) holds because
\[
\vtXP^{\t}\left(\vtXP\vtXP^{\t}\right)^{-1}\vtXP-\left(\vtXP^{\t}\vtXP+\mu_n n\vI\right)^{-1}\vtXP^{\t}\vtXP\] 
is positive semi-definite.
Hence, the proof of \eqref{eq:p>ngoal1} only require us to lower bound $\min_{i}(\tsigma_i)$ and the following lemma will help us complete this task.

\begin{lemma}\label{lem:St}
Suppose the empirical eigenvalue distribution of the diagonal matrix $\vH$ converges to a limiting distribution $\mathcal{H}$ with probability density function $f_h$.
Assume that the support of $f_h$ is a subset of the interval $[\eta_1,\infty)$ for some positive constant $\eta_1$.
Let $\vbX\in \bbR^{n\times p}$ be a standard Gaussian matrix and suppose $p/n\rightarrow \gamma>1$.
Let $m_n(z)$ be the Stieltjes transform of the empirical eigenvalue distribution $\mathcal{F}_n$ of $\frac{1}{n}\vbX\vH\vbX^{\t}$.
Then $\mathcal{F}_n$ converges to a limit $\mathcal{F}$ whose Stieltjes transform, denoted by $m(z)$, satisfies
\BE
	m(z)&=& -\left(z-\gamma \int_{\eta_1}^{\infty} \frac{t f_h(t)\dif t}{1+t\cdot m(z)}\right)^{-1} , \quad \forall z\in \operatorname{supp}(\mathcal{F})^c.	\label{eq:lem_m}
\EE
Further, there exists a constant $c_{\epsilon}>0$ such that the minimum eigenvalue of $\frac{1}{n}\vbX\vH\vbX^{\t}$ is lower-bounded by $c_{\epsilon}$ in probability. Finally, for any increasing sequence $z_n\rightarrow 0^{-}$, we have 
\BE
	m_n(z_n) \ \ipt \ m(0)  \quad \text{and} \quad m_n'(z_n) \ \ipt \ m'(0). 
\EE	
\end{lemma}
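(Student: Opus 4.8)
The plan is to recognize $\tfrac1n\vbX\vH\vbX^\t$ as the $n\times n$ companion of a generalized Marchenko--Pastur ensemble and then settle, in order, (i) the existence of the limiting spectral distribution $\mathcal F$ together with the fixed-point equation \eqref{eq:lem_m}, (ii) the lower bound $c_\epsilon$ on the smallest eigenvalue, and (iii) the convergence of $m_n$ and $m_n'$ as $z\to0^-$. For (i), write $\vbX\vH\vbX^\t=\sum_{j=1}^p\vH_{jj}\,\vbx_j\vbx_j^\t$ with $\vbx_j\in\bbR^n$ the columns of $\vbX$; this exhibits $\tfrac1n\vbX\vH\vbX^\t$ as the companion of the $p\times p$ sample covariance matrix $\tfrac1n\vH^{1/2}\vbX^\t\vbX\vH^{1/2}$, which has population spectral distribution $\mathcal H$ and aspect ratio $p/n\to\gamma$, and the two matrices share all nonzero eigenvalues. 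When $\|\vH\|_2$ is bounded, the Marchenko--Pastur/Silverstein theorem (\citet{silverstein1995analysis}; see also \citet{dobriban2018high,ledoit2011eigenvectors}) gives that $\mathcal F_n$ converges a.s.\ to a deterministic $\mathcal F$ whose companion Stieltjes transform solves \eqref{eq:lem_m}. Since $f_h$ may have unbounded support, I would reduce to this case by truncation: cap the diagonal entries of $\vH$ at a level $K$ to form $\vH^{(K)}$, apply the bounded-norm result to get $\mathcal F^{(K)}$ with transform $m^{(K)}$ solving \eqref{eq:lem_m} for $\vH^{(K)}$, and then let $K\to\infty$. Because $\mathcal H([K,\infty))\to0$, for large $K$ only an $\epsilon$-fraction of diagonal entries are modified, so a rank/Weyl interlacing bound makes $\limsup_n$ of the L\'evy distance between $\mathcal F_n^{(K)}$ and $\mathcal F_n$ small; and since for $z$ to the left of the support one has $m(z)>0$ and $t/(1+t\,m(z))\le 1/m(z)$, dominated convergence carries \eqref{eq:lem_m} from $\vH^{(K)}$ to $\vH$ as $K\to\infty$.

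For (ii), the unbounded support is harmless: $\vH\succeq\eta_1\vI$ forces $\tfrac1n\vbX\vH\vbX^\t\succeq\eta_1\cdot\tfrac1n\vbX\vbX^\t$, and since $p/n\to\gamma>1$ the Bai--Yin law (or a standard Gaussian singular-value bound) gives $\lambda_{\min}(\tfrac1n\vbX\vbX^\t)\to(\sqrt\gamma-1)^2>0$ almost surely. Choosing $c_\epsilon:=\tfrac12\eta_1(\sqrt\gamma-1)^2$ yields $\Prb{\lambda_{\min}(\tfrac1n\vbX\vH\vbX^\t)\ge c_\epsilon}\to1$, hence $\operatorname{supp}(\mathcal F)\subseteq[c_\epsilon,\infty)$ and $m(z)=\int (t-z)^{-1}\dif\mathcal{F}(t)$ is analytic on the disk $\{|z|<c_\epsilon\}$ with $m'(0)=\int t^{-2}\dif\mathcal{F}(t)\in(0,\infty)$. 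For (iii), note that on the event $\{\lambda_{\min}(\tfrac1n\vbX\vH\vbX^\t)\ge c_\epsilon\}$, which has probability $1-o(1)$, the function $z\mapsto m_n(z)=\tfrac1n\tr{(\tfrac1n\vbX\vH\vbX^\t-z\vI)^{-1}}$ is analytic and bounded by $2/c_\epsilon$ on the disk $\{|z|\le c_\epsilon/2\}$. Combining this local boundedness with the standard pointwise convergence $m_n(z)\ipt m(z)$ for $z$ in the upper half-plane, Vitali's theorem upgrades the convergence to locally uniform convergence on $\{|z|<c_\epsilon/2\}$, and Cauchy's estimates then give $m_n'\to m'$ locally uniformly as well; therefore any $z_n\to0^-$ (eventually inside $[-c_\epsilon/2,0]$) satisfies $m_n(z_n)\ipt m(0)$ and $m_n'(z_n)\ipt m'(0)$.

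The hard part is step (i): the off-the-shelf Marchenko--Pastur results being invoked---including those of \citet{dobriban2018high}---ultimately require the population operator norm to stay bounded, so the genuine work lies in making the truncation-and-limit argument rigorous. In particular one must verify that $\mathcal F^{(K)}\to\mathcal F$ and, crucially, that the $K\to\infty$ limit of $m^{(K)}$ still solves \eqref{eq:lem_m} for $\vH$ even when $\int t\,f_h(t)\,\dif t=\infty$ (which does occur for the density $f_\kappa$ used in \Cref{thm:p>n} when $\kappa\ge1$). This is precisely the $\|\vtSigmaP\|_2\to\infty$ subtlety flagged in the remark after the proof sketch of \Cref{thm:p>n}; once it is dispatched, the remaining claims reduce to classical random-matrix facts.
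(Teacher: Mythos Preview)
Your plan would work, but you have inverted the difficulty. The classical Marchenko--Pastur/Silverstein theorem (Theorem~2.38 of \citet{tulino2004random}, which is precisely what the paper cites) requires only weak convergence $F^{\vH}\to\mathcal H$; it does \emph{not} assume $\|\vH\|_2$ is bounded. So part (i) is a one-line citation, and your truncation scheme, though it would go through, is superfluous. The remark after the sketch of \Cref{thm:p>n} about $\|\vtSigmaP\|_2\to\infty$ is \emph{not} about this lemma; it concerns the separate task of controlling $\tfrac{\mu_n}{n}\tr{\vtSigmaP\vtS_n}$ in \Cref{hardpart}, where $\vtSigmaP$ sits \emph{outside} the resolvent and a crude $\|\vtSigmaP\|_2$-bound would lose a factor of $N^\kappa$. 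That is handled by a leave-one-out argument elsewhere, not inside \Cref{lem:St}.

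For (ii) your Bai--Yin route is more elementary than the paper's. The paper instead studies the inverse map $z(m)=\gamma\int_{\eta_1}^\infty\tfrac{t}{1+tm}f_h(t)\,\dif t-\tfrac1m$, shows it is strictly increasing on some interval $(0,m_c)$ with $z(m^*)=0$ for a unique $m^*<m_c$, and then invokes the support characterization of \citet{silverstein1995analysis} to conclude $[0,c_\epsilon]\subset\operatorname{supp}(\mathcal F)^c$. Your argument is cleaner but has a small gap: the hypothesis is only $\operatorname{supp}(f_h)\subset[\eta_1,\infty)$, not that every entry of $\vH$ is at least $\eta_1$, so $\vH\succeq\eta_1\vI$ is not literally available. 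The fix is routine---ESD convergence forces all but $o(p)$ diagonal entries to exceed $\eta_1/2$, so one may restrict to those columns (aspect ratio still $\to\gamma$) and write $\tfrac1n\vbX\vH\vbX^\t\succeq(\eta_1/2)\cdot\tfrac1n\vbX_S\vbX_S^\t$ before applying Bai--Yin. For (iii), your Vitali/normal-families argument is a valid and tidy alternative to the paper's more pedestrian approach, which on the high-probability event $\{\lambda_{\min}\ge c_\epsilon\}$ simply uses the Lipschitz bound $|m_n(z)-m_n(z')|\le|z-z'|/c_\epsilon^2$ together with pointwise a.s.\ convergence at a fixed $z=-\epsilon''$ and continuity of $m$ at $0$, and likewise for $m_n'$ with $c_\epsilon^3$ in the denominator.
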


The proof of \Cref{lem:St} is shown in \Cref{app:St}. Hence to apply Lemma \ref{lem:St}, we need the empirical distribution of the eigenvalues of the covariance matrix $\vSigmaP$ converges to a limiting distribution and thus we need to scale $\vSigmaP$ properly. The following \namecref{lem:dH} confirms that the correct scaling is $p^{\kappa}$.

\begin{lemma}\label{lem:dH}
  Let $S=\{i\}_{p_1< i \leq p_2}$ with $0\leq p_1<p_2\leq N$. Suppose $\frac{p_1}{N}\rightarrow \alpha_1$ and $\frac{p_2}{N}\rightarrow \alpha_2$ with $0\leq \alpha_1<\alpha_2\leq 1$. Then, the empirical eigenvalue distribution of $N^{\kappa}\vSigma_{S}$ converges to a (non-random) distribution $\mathcal{F}$ with probability density function $f$ given by 
\BE
	f(s) \= \left\{\begin{aligned}
	&\frac{1}{\kappa(\alpha_2-\alpha_1)}s^{-1-\frac{1}{\kappa}}\cdot \ind{s \in \left[\alpha_2^{-\kappa},\alpha_1^{-\kappa}\right]}, && \alpha_1>0\\
	&\frac{1}{\kappa\alpha_2}s^{-1-\frac{1}{\kappa}}\cdot \ind{s \in [\alpha_2^{-\kappa},\infty)}, && \alpha_1=0
	\end{aligned}\right.
	.	\label{eq:pdf}
\EE
\end{lemma}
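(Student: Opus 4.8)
The plan is to observe that the whole statement is deterministic---$N^{\kappa}\vSigma_S$ is a diagonal matrix with explicitly prescribed entries---and to prove it by computing the empirical cumulative distribution function and taking its pointwise limit. Since $\vSigma=\operatorname{diag}(\lambda_1,\dots,\lambda_N)$ with $\lambda_j=j^{-\kappa}$, the diagonal entries of $N^{\kappa}\vSigma_S$ are $N^{\kappa}j^{-\kappa}=(j/N)^{-\kappa}$ for $j\in\{p_1+1,\dots,p_2\}$; equivalently, the empirical eigenvalue distribution of $N^{\kappa}\vSigma_S$ is the push-forward under the decreasing map $x\mapsto x^{-\kappa}$ of the uniform distribution on the grid $\{(p_1+1)/N,\dots,p_2/N\}$, which, by $p_1/N\to\alpha_1$ and $p_2/N\to\alpha_2$, converges weakly to the uniform law on $[\alpha_1,\alpha_2]$. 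I would carry this out at the level of CDFs rather than invoking a push-forward/continuous-mapping argument, precisely to handle cleanly the case $\alpha_1=0$, where $x\mapsto x^{-\kappa}$ blows up near $0$ and the limiting density has unbounded support.

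Concretely, for each fixed $s>0$ the empirical CDF evaluated at $s$ is $\tfrac{1}{p_2-p_1}\#\{j:p_1<j\le p_2,\ (j/N)^{-\kappa}\le s\}=\tfrac{1}{p_2-p_1}\#\{j:p_1<j\le p_2,\ j\ge Ns^{-1/\kappa}\}$, using monotonicity of $t\mapsto t^{-\kappa}$. This count equals $\bigl(p_2-\max\{p_1,\lceil Ns^{-1/\kappa}\rceil-1\}\bigr)^{+}$, so dividing by $p_2-p_1$ and letting $N\to\infty$ gives the limit $\bigl(\alpha_2-\max\{\alpha_1,s^{-1/\kappa}\}\bigr)^{+}/(\alpha_2-\alpha_1)$. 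Hence the limiting CDF $F(s)$ equals $0$ for $s\le\alpha_2^{-\kappa}$, equals $(\alpha_2-s^{-1/\kappa})/(\alpha_2-\alpha_1)$ for $\alpha_2^{-\kappa}<s<\alpha_1^{-\kappa}$ (reading $\alpha_1^{-\kappa}=\infty$ when $\alpha_1=0$), and equals $1$ for $s\ge\alpha_1^{-\kappa}$. Differentiating $F$ on the interior of its support yields exactly the two cases of \eqref{eq:pdf}: $\tfrac{1}{\kappa(\alpha_2-\alpha_1)}s^{-1-1/\kappa}$ on $[\alpha_2^{-\kappa},\alpha_1^{-\kappa}]$ when $\alpha_1>0$, and $\tfrac{1}{\kappa\alpha_2}s^{-1-1/\kappa}$ on $[\alpha_2^{-\kappa},\infty)$ when $\alpha_1=0$; a one-line integral verifies that the corresponding density integrates to $1$. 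Since $F$ is continuous at every $s>0$ and $F(s)=0$ for $s\le0$, pointwise convergence of the empirical CDFs at all such $s$ is precisely weak convergence to $\mathcal F$, which is the assertion of the lemma.

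I do not expect a genuine obstacle. The one point deserving care is the regime $\alpha_1=0$: there the atoms of the empirical distribution can escape to $+\infty$ (for instance if $p_1\asymp\sqrt N$), so the sequence of empirical measures is not tight in the naive sense. Working directly with CDFs sidesteps this, because convergence in distribution to a probability measure with unbounded support is still characterized by CDF convergence at continuity points; and the computation above in fact shows the tail mass $1-F_N(M)\to M^{-1/\kappa}/\alpha_2$ for each fixed $M$, which vanishes as $M\to\infty$, confirming that no mass is lost in the limit.
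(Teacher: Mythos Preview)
Your proposal is correct and follows essentially the same approach as the paper: both compute the empirical distribution function of the explicit diagonal entries $(N/j)^{\kappa}$ and pass to the limit using $p_1/N\to\alpha_1$, $p_2/N\to\alpha_2$. The paper works with the survival function $\P(\sigma_n>t)=\frac{1}{|S|}(\lfloor N t^{-1/\kappa}\rfloor-p_1)$ rather than the CDF, but this is the same calculation; your treatment is in fact slightly more careful about weak convergence in the unbounded-support case $\alpha_1=0$, which the paper leaves implicit.
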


The proof of \Cref{lem:dH} is shown in \Cref{app:dH}.
Using \Cref{lem:St}, \Cref{lem:dH}, and \eqref{eq:elambda},
we see that since $\mu_n = o(N^{-\kappa})$, we have
\[ |\epsilon_{\mu_n}| = \op(1) , \]
which establishes Equation~\eqref{eq:p>ngoal1}.

\subsubsection{Proof of Equation~\eqref{eq:p>ngoal2}}
\label{hardpart}

Our next goal is to prove \eqref{eq:p>ngoal2}, i.e.,
\[
	\frac{\mu_n}{n}\tr{\vtSigmaP \vtS_n} \ipt \frac{1}{m_{\kappa}(0)}
\]
where
\[ \vtS_n := \left(\frac1n\vtXP^{\t}\vtXP+\mu_n\vI\right)^{-1} . \]
The same result has been proved in Lemma 2.2 of \cite{ledoit2011eigenvectors} with additional assumption that
the empirical eigenvalue distribution of $\vtSigma$
converges to a limiting distribution with bounded support.
However, this assumption does not hold in our case.
We employ a similar proof strategy with more involved arguments based on leave-one-out estimates~\citep{xu2019consistent}.

Let $\vtx_i$ be the $i$-th row of $\vtXP$.
Then using the identity
\[ \vtS_n^{-1}-\mu_n\vI=\frac{1}{n}\sum_{i=1}^n\vtx_i\vtx_i^{\t} , \]
we have
\BE
	\frac{1}{n}\sum_{i=1}^n\vtx_i^{\t}\vtS_n\vtx_i
	&=&
	\frac{1}{n}\trace{\sum_{i=1}^n\vtS_n\vtx_i\vtx_i^{\t}}
		\n\\
	&=&
	\trace{\vtS_n(\vtS_n^{-1}-\mu_n\vI)}
		\n\\	
	&=&
	\trace{\vI-\mu_n \vtS_n}
	.	\label{eq:lemSt_eq1}
\EE
For each $i=1,\dotsc,n$, define
\[ \vtS_n^{\backslash i}:= \left( \frac1n\vtXP^{\t}\vtXP-\frac{1}{n}\vtx_i\vtx_i^{\t}+\mu_n\vI \right)^{-1} = \left( \vtS_n^{-1}-n^{-1}\vtx_i\vtx_i^{\t} \right)^{-1} . \]
By the Sherman-Morrison formula, we have
\BE
	\vtS_n
	&=&
	\vtS_n^{\backslash i}
	-\frac{1}{n}\cdot\frac{\vtS_n^{\backslash i}\vtx_i\vtx_i^{\t}\vtS_n^{\backslash i}}{1+\frac{1}{n}\vtx_i^{\t}\vtS_n^{\backslash i}\vtx_i}
	.	\label{eq:p>npart1bi}
\EE
Hence, with \eqref{eq:lemSt_eq1}, we have
\BE
	\tr{\vI-\mu_n \vtS_n}
	&=&
	\frac{1}{n}\sum_{i=1}^n\vtx_i^{\t}\vtS_n\vtx_i
	\=
	\frac{1}{n}\sum_{i=1}^n\vtx_i^{\t}\left(\vtS_n^{\backslash i}
	-\frac{1}{n}\cdot\frac{\vtS_n^{\backslash i}\vtx_i\vtx_i^{\t}\vtS_n^{\backslash i}}{1+\frac{1}{n}\vtx_i^{\t}\vtS_n^{\backslash i}\vx_i}\right)\vtx_i
		\n\\
	&=&
	\frac{1}{n}\sum_{i=1}^n\frac{\vtx_i^{\t}\vtS_n^{\backslash i}\vtx_i}{1+\frac{1}{n}\vtx_i^{\t}\vtS_n^{\backslash i}\vtx_i}
	\=
	n-\sum_{i=1}^n\frac{1}{1+\frac{1}{n}\vtx_i^{\t}\vtS_n^{\backslash i}\vtx_i}
	.	\n
\EE
Since $\tr{\vI-\mu_n \vtS_n}=n-n\mu_n \cdot m_n(-\mu_n)$, we have
\BE
	m_n(-\mu_n)&=&\frac{1}{n}\sum_{i=1}^n\frac{1}{\mu_n+\frac{\mu_n}{n}\vtx_i^{\t}\vtS_n^{\backslash i}\vtx_i}
	.	\label{eq:p>neq_equal}
\EE
Note that $|m_n(-\mu_n)-m_n(0)|\leq \frac{\mu_n}{\min(\tsigma_i^2)}$ where $\tsigma_i$ is the $i$th eigenvalue of $\frac{1}{n}\vtXP\vtXP^{\t}$. By \Cref{lem:St}, we have 
\BE
	m_n(-\mu_n)\= m_n(0)+\Op(\mu_n)\ \ipt \ m_{\kappa}(0). \label{eq:msmallterm}
\EE
Therefore, the LHS of \eqref{eq:p>neq_equal} converges to $m_{\kappa}(0)$ in probability. Then we just need to show the RHS of \eqref{eq:p>neq_equal} converges to \[ \left(\frac{\mu_n}{n}\trace{\vtSigmaP\vtS_n}\right)^{-1} \] in probability. Let 
\[
	\Delta_i
	\ := \ \frac{\mu_n}{n}\trace{\vtSigmaP\vtS_n}-\frac{\mu_n}{n}\vtx_i^{\t}\vtS_n^{\backslash i}\vtx_i-\mu_n,
\]
then note that
\BE
  \left|\left(\frac{\mu_n}{n}\trace{\vtSigmaP\vtS_n}\right)^{-1}-m_n(-\mu_n)\right|
  & = &
	\left|\left(\frac{\mu_n}{n}\trace{\vtSigmaP\vtS_n}\right)^{-1}-\frac{1}{n}\sum_{i=1}^n\frac{1}{\mu_n+\frac{\mu_n}{n}\vtx_i^{\t}\vtS_n^{\backslash i}\vtx_i}\right|
		\n\\
	&=&
	\left|\frac{1}{n}\sum_{i=1}^n\frac{\Delta_i}{\frac{\mu_n}{n}\trace{\vtSigmaP\vtS_n}\cdot\left(\frac{\mu_n}{n}\trace{\vtSigmaP\vtS_n}-\Delta_i\right)}\right|
		\n\\
	&\leq&
	\sup_i \frac{|\Delta_i|}{\frac{\mu_n}{n}\trace{\vtSigmaP\vtS_n}\cdot\left|\frac{\mu_n}{n}\trace{\vtSigmaP\vtS_n}-|\Delta_i|\right|}
	.	\n
\EE
We claim that
\BE
  \frac{\mu_n}{n}\trace{\vtSigmaP\vtS_n} &=& \Thp(1) ; \n \\
  \sup_i|\Delta_i| &=& \Op\left(\frac{\ln N}{\sqrt{N}}\right) \n
\EE
(\Cref{claim:p>ngoal4} and \Cref{claim:p>ngoal5} below).
Then with \eqref{eq:msmallterm}, we have
\[
	\left(\frac{\mu_n}{n}\trace{\vtSigmaP\vtS_n}\right)^{-1}\ \ipt \ m_{\kappa}(0).
\]
This in turn implies Equation~\eqref{eq:p>ngoal2} as desired.

\subsubsection{Supporting propositions}

\begin{proposition}
  \label{claim:p>ngoal4}
  \[ \frac{\mu_n}{n}\trace{\vtSigmaP\vtS_n} \= \Thp(1) . \]
\end{proposition}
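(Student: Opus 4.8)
\emph{Proof plan.}
The plan is to trade the regularizer $\mu_n$ for an exact algebraic identity and then reduce everything to well-conditioned Wishart matrices. First, recalling that the left-hand side of \eqref{eq:p>neq_2} is independent of $\mu$, specialize that identity to $\mu=\mu_n$; this gives $\frac{\mu_n}{n}\tr{\vtSigmaP\vtS_n}=\frac{N^\kappa}{n}\tr{\vSigmaP(\vI-\vPi_{\vXP})}-\epsilon_{\mu_n}$ with $\epsilon_{\mu_n}=\op(1)$ by \eqref{eq:p>ngoal1} (already established). Since $N^\kappa/n=\Theta(N^{\kappa-1})$, it suffices to prove $\tr{\vSigmaP(\vI-\vPi_{\vXP})}=\Thp(N^{1-\kappa})$.

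Next I would record the deterministic identity $\tr{\vSigmaP(\vI-\vPi_{\vXP})}=\tr{(\vQ^\t\vSigmaP^{-1}\vQ)^{-1}}$, where $\vQ\in\bbR^{p\times(p-n)}$ is any matrix with orthonormal columns spanning $\ker(\vbXP)$ and $\vbXP:=\vXP\vSigmaP^{-1/2}$ (a matrix with i.i.d.\ $\Normal(0,1)$ entries). Indeed $\vI-\vPi_{\vXP}$ is the orthogonal projection onto $\ker(\vXP)$, and $\vXP v=0\iff\vSigmaP^{1/2}v\in\ker(\vbXP)$, so $\ker(\vXP)=\mathrm{col}(\vSigmaP^{-1/2}\vQ)$; writing this projection as $\vB(\vB^\t\vB)^{-1}\vB^\t$ with $\vB:=\vSigmaP^{-1/2}\vQ$, cyclicity of trace together with $\vB^\t\vSigmaP\vB=\vQ^\t\vQ=\vI$ and $\vB^\t\vB=\vQ^\t\vSigmaP^{-1}\vQ$ gives the identity. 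Since $\vbXP$ has i.i.d.\ Gaussian entries, $\ker(\vbXP)$ is a Haar-random $(p-n)$-dimensional subspace, and because $\tr{(\vQ^\t\vSigmaP^{-1}\vQ)^{-1}}$ depends only on $\mathrm{col}(\vQ)$ we may take $\vQ=\vG(\vG^\t\vG)^{-1/2}$ with $\vG\in\bbR^{p\times(p-n)}$ having i.i.d.\ $\Normal(0,1)$ entries. A change of variables then gives $\tr{(\vQ^\t\vSigmaP^{-1}\vQ)^{-1}}=\tr{(\vG^\t\vG)(\vG^\t\vSigmaP^{-1}\vG)^{-1}}$, which lies between $(p-n)\lambda_{\min}(\vG^\t\vG)/\lambda_{\max}(\vG^\t\vSigmaP^{-1}\vG)$ and $(p-n)\lambda_{\max}(\vG^\t\vG)/\lambda_{\min}(\vG^\t\vSigmaP^{-1}\vG)$.

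It then remains to control four extreme eigenvalues. Since $p/(p-n)\to\ratiop/(\ratiop-\ration)>1$, standard concentration bounds for the extreme singular values of Gaussian matrices give that $\lambda_{\min}(\vG^\t\vG)$ and $\lambda_{\max}(\vG^\t\vG)$ are both $\Thp(p)$; moreover $\lambda_{\max}(\vG^\t\vSigmaP^{-1}\vG)\le\|\vSigmaP^{-1}\|_2\lambda_{\max}(\vG^\t\vG)=p^\kappa\lambda_{\max}(\vG^\t\vG)=\Op(p^{\kappa+1})$. (From these, or just from the deterministic inequality $\vQ^\t\vSigmaP^{-1}\vQ\preceq\|\vSigmaP^{-1}\|_2\vI=p^\kappa\vI$, one already obtains the lower bound $\tr{\vSigmaP(\vI-\vPi_{\vXP})}=\Omp(N^{1-\kappa})$.) The only nontrivial bound is $\lambda_{\min}(\vG^\t\vSigmaP^{-1}\vG)=\Omp(p^{\kappa+1})$, and here I would use that the large diagonal entries of $\vSigmaP^{-1}=\diag{1,2^\kappa,\dotsc,p^\kappa}$ are concentrated at the top coordinates. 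Fixing a constant $c\in(1,\ratiop/(\ratiop-\ration))$, let $k:=\lceil c(p-n)\rceil$ and let $\vP_S$ be the projection onto the top $k$ coordinates $S$; then $\vSigmaP^{-1}\succeq(p-k)^\kappa\vP_S$, and $(p-k)^\kappa=\Theta(p^\kappa)$ since $p-k=p(1-c(\ratiop-\ration)/\ratiop+o(1))$ has a strictly positive limiting coefficient by the choice of $c$. Hence $\vG^\t\vSigmaP^{-1}\vG\succeq(p-k)^\kappa\vG_S^\t\vG_S$, where $\vG_S\in\bbR^{k\times(p-n)}$ is the Gaussian submatrix of $\vG$ on the rows in $S$; as $k/(p-n)\to c>1$ we get $\lambda_{\min}(\vG_S^\t\vG_S)=\Omp(p)$, so $\lambda_{\min}(\vG^\t\vSigmaP^{-1}\vG)=\Omp(p^{\kappa+1})$. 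Feeding the four bounds into the sandwich yields $\tr{\vSigmaP(\vI-\vPi_{\vXP})}=\Thp(p^{1-\kappa})=\Thp(N^{1-\kappa})$, and therefore $\frac{\mu_n}{n}\tr{\vtSigmaP\vtS_n}=\Theta(N^{\kappa-1})\cdot\Thp(N^{1-\kappa})-\op(1)=\Thp(1)$.

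The hard part will be the upper bound, i.e.\ the $\Omp(p^\kappa)$ lower bound on $\lambda_{\min}(\vQ^\t\vSigmaP^{-1}\vQ)$. The crude estimate $\tr{\vSigmaP(\vI-\vPi_{\vXP})}\le\tr{\vSigmaP}$ is of the right order only for $\kappa<1$; for $\kappa\ge1$ it is $\Theta(1)$ or $\Theta(\log N)$, far larger than the truth $\Theta(N^{1-\kappa})\to0$. So one genuinely has to exploit that $\ker(\vXP)$ is ``generic'' and hence well aligned with the large-eigenvalue subspace of $\vSigmaP^{-1}$; this alignment is exactly what the well-conditioning of the $k\times(p-n)$ submatrix $\vG_S$ encodes, and it is available precisely because $k>p-n$, i.e.\ because we are in the overparametrized regime $p>n$.
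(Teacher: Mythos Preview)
Your proof is correct and takes a genuinely different route from the paper's. The paper works directly with $\frac{\mu_n}{n}\tr{\vtSigmaP\vtS_n}$: for the lower bound it uses the near-trivial observations $\vtSigmaP\succeq\vI$ and that $\vtS_n$ has at least $p-n$ eigenvalues equal to $1/\mu_n$, yielding $\frac{\mu_n}{n}\tr{\vtSigmaP\vtS_n}\ge\frac{p-n}{n}\to\frac{\ratiop-\ration}{\ration}>0$; for the upper bound it rewrites the trace as $\frac{\mu_n}{n}\tr{\vbS_n}$ with $\vbS_n=(\frac{1}{n}\vbXP^\t\vbXP+\mu_n\vtSigmaP^{-1})^{-1}$ and shows $\|\vbS_n\|_2=\Op(1/\mu_n)$ via a separate technical lemma (Lemma~\ref{lem:eigen}) whose proof, like yours, isolates a block of coordinates on which $\vtSigmaP^{-1}$ is uniformly large and exploits the well-conditioning of the corresponding Gaussian submatrix. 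Your route instead eliminates $\mu_n$ at the outset by passing to $\tr{\vSigmaP(\vI-\vPi_{\vXP})}$ and the clean kernel-basis identity $\tr{(\vQ^\t\vSigmaP^{-1}\vQ)^{-1}}$, which makes the geometry (alignment of the Haar-random null space with the heavy part of $\vSigmaP^{-1}$) explicit and reduces everything to extreme singular values of standard Gaussian (sub)matrices. The trade-off: you need the already-established bound $|\epsilon_{\mu_n}|=\op(1)$ as input, whereas the paper's lower bound is self-contained and one line; on the other hand, your $\vG_S$ step subsumes the content of Lemma~\ref{lem:eigen} directly into the argument rather than stating and proving it separately.
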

\begin{proof}
  Note that 
\BE
	\frac{\mu_n}{n}\trace{\vtSigmaP\vtS_n}
	&\stackrel{\mathrm{(i)}}{\geq}&
	\frac{\mu_n}{n}\trace{\vtS_n}
	\=
	\frac{\mu_n}{n}\trace{\left(\frac{1}{n}\vtXP^{\t}\vtXP+\mu_n\vI\right)^{-1}}
		\n\\
	&\stackrel{\mathrm{(ii)}}{\geq}&
	\frac{\mu_n}{n}\cdot\frac{p-n}{\mu_n}
	\ \rightarrow \ \frac{\ratiop-\ration}{\ration}\ >\ 0
	,	\n
\EE
where inequality (i) holds due to the fact that $\vtSigmaP$ is a diagonal matrix with diagonal elements lower bounded by $1$, and inequality (ii) holds due to the fact that $\left(\frac{1}{n}\vtXP^{\t}\vtXP+\mu_n\vI\right)^{-1}$ has at least $p-n$ number of eigenvalues $\frac{1}{\mu_n}$. Hence, we have $\frac{\mu_n}{n}\tr{\vtSigmaP\vtS_n}=\Omp(1)$. To show $\frac{\mu_n}{n}\trace{\vtSigmaP\vtS_n}=\Op(1)$ as well, let us introduce $\vbS_n=\vtSigmaP^{1/2}\vS_n\vtSigmaP^{1/2}$, then we have
\[
	\frac{\mu_n}{n}\trace{\vtSigmaP\vtS_n} \= \frac{\mu_n}{n}\trace{\vbS_n} \ \leq \mu_n \frac{p}{n}\|\vbS_n\|_2.
\]
Therefore, as $p/n\rightarrow \ratiop/\ration$, we just need to upper bound $\|\vbS_n\|_2$.
To do this, we use the following \namecref{lem:eigen}.
 
\begin{lemma}\label{lem:eigen}
Let $\vSigma\in \bbR^{p\times p}$ be a diagonal matrix. Let $\vbX\in \bbR^{n\times p}$ be a standard Gaussian matrix with $p>n$. Suppose $\frac{p}{n}\rightarrow \gamma>1$ as $n,p\rightarrow \infty$. Suppose the $\frac{n}{2}$th smallest diagonal element of $\vSigma$ can be lower bounded by a constant $\nu$ with probability $1-\delta$. Then the minimum eigenvalue of $\frac{1}{n}\vbX^{\t}\vbX+\mu \vSigma$ is lower bounded by 
\[
	\min\left(c_1, c_2 \mu\right)
\]
with probability $1-cn^2\cdot \mathrm{exp}(-c'n)-\delta$ for some positive constants $c_1,c_2, c,c'>0$ that only depend on $\gamma$.
\end{lemma}

The proof of Lemma \ref{lem:eigen} is shown in Appendix \ref{app:eigen}. Note that 
\[
	\vbS_n=\left(\frac{1}{n}\vbXP^{\t}\vbXP+\mu_n\vtSigmaP^{-1}\right)^{-1}
\]
where $\vbXP=\vtXP\vtSigmaP^{-1/2}$ is a standard Gaussian matrix. Further, the $\frac{n}{2}$ smallest eigenvalue of $\vtSigmaP^{-1}$ is $\frac{n^{\kappa}}{(2 p)^{\kappa}}$ which converges to a constant $(\frac{\ration}{2\ratiop})^{\kappa}$. Hence, by \Cref{lem:eigen}, we know $\|\vbS_n\|_2$ is upper bounded by $\Op(\frac{1}{\mu_n})$ and thus, $\frac{\mu_n}{n}\trace{\vtSigmaP\vtS_n}=\Op(1)$.
This completes the proof of \Cref{claim:p>ngoal4}.
\end{proof}

\begin{proposition}
  \label{claim:p>ngoal5}
  \[ \sup_i|\Delta_i| \= \Op\left(\frac{\ln N}{\sqrt{N}}\right) . \]
\end{proposition}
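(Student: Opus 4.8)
The plan is to analyze $\Delta_i$ through a leave-one-out (Sherman--Morrison) decomposition together with Hanson--Wright concentration for Gaussian quadratic forms, the point being that although the resolvent blows up --- $\norm{\vtS_n^{\backslash i}}_2 = \Op(1/\mu_n)$ and $\norm{\vtSigmaP}_2 = N^{\kappa}$ --- every such blow-up is cancelled by a compensating factor $\mu_n$. Write $\vtx_i = \vtSigmaP^{1/2}\vg_i$ with $\vg_1,\dots,\vg_n$ i.i.d.\ $\Normal(\v0,\vI_p)$, let $\vbXP^{\backslash i}$ be $\vbXP$ with its $i$th row deleted, and set $\vbS_n^{\backslash i} := \vtSigmaP^{1/2}\vtS_n^{\backslash i}\vtSigmaP^{1/2} = \left(\tfrac1n(\vbXP^{\backslash i})^{\t}\vbXP^{\backslash i}+\mu_n\vtSigmaP^{-1}\right)^{-1}$, which is independent of $\vg_i$. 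Then
\[
  \Delta_i \ = \ \underbrace{\tfrac{\mu_n}{n}\left(\tr{\vtSigmaP\vtS_n^{\backslash i}}-\vtx_i^{\t}\vtS_n^{\backslash i}\vtx_i\right)}_{A_i}
  \ + \ \underbrace{\tfrac{\mu_n}{n}\left(\tr{\vtSigmaP\vtS_n}-\tr{\vtSigmaP\vtS_n^{\backslash i}}\right)}_{B_i}
  \ - \ \mu_n ,
\]
and the Sherman--Morrison identity \eqref{eq:p>npart1bi} gives $B_i = -\tfrac{\mu_n}{n^2}\cdot\tfrac{\vtx_i^{\t}\vtS_n^{\backslash i}\vtSigmaP\vtS_n^{\backslash i}\vtx_i}{1+\frac{1}{n}\vtx_i^{\t}\vtS_n^{\backslash i}\vtx_i} = -\tfrac{\mu_n}{n^2}\cdot\tfrac{\norm{\vbS_n^{\backslash i}\vg_i}^2}{1+\frac{1}{n}\vg_i^{\t}\vbS_n^{\backslash i}\vg_i}$, using $\vtx_i^{\t}\vtS_n^{\backslash i}\vtx_i = \vg_i^{\t}\vbS_n^{\backslash i}\vg_i$ and $\vtx_i^{\t}\vtS_n^{\backslash i}\vtSigmaP\vtS_n^{\backslash i}\vtx_i = \norm{\vbS_n^{\backslash i}\vg_i}^2$.

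The ingredients I would prove, each uniformly over $i\in\{1,\dots,n\}$ by a union bound (the relevant failure probabilities being either $e^{-\Omega(n)}$ or $N^{-\omega(1)}$, and $n\le N$): (i) $\norm{\vbS_n^{\backslash i}}_2 = \Op(1/\mu_n)$, from \Cref{lem:eigen} applied to the $(n-1)\times p$ standard Gaussian $\vbXP^{\backslash i}$ and the diagonal matrix $\vtSigmaP^{-1}$, whose $\tfrac{n-1}{2}$th smallest entry tends to $(\ration/2)^{\kappa}>0$, so that (since $\mu_n$ is eventually below the threshold there) the minimum-eigenvalue bound is the $c_2\mu_n$ branch; (ii) $\norm{\vg_i}^2\le 2p$ by a $\chi^2_p$ tail bound \citep{laurent2000adaptive}; and (iii) the Gaussian quadratic-form deviation $\abs{\vg_i^{\t}\vbS_n^{\backslash i}\vg_i-\tr{\vbS_n^{\backslash i}}} = \Op\!\left(\norm{\vbS_n^{\backslash i}}_F\sqrt{\ln N}+\norm{\vbS_n^{\backslash i}}_2\ln N\right)$ (Hanson--Wright, conditionally on $\vbXP^{\backslash i}$; cf.\ \citet{laurent2000adaptive}), which after $\norm{\vbS_n^{\backslash i}}_F\le\sqrt{p}\,\norm{\vbS_n^{\backslash i}}_2$ and (i) becomes $\Op(\sqrt{N\ln N}/\mu_n)$. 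I would also record the \emph{deterministic} bound $\tfrac{\mu_n}{n}\tr{\vtSigmaP\vtS_n^{\backslash i}} = \tfrac{\mu_n}{n}\tr{\vbS_n^{\backslash i}} \ge \tfrac{p-n+1}{n}$: since $\tfrac1n(\vbXP^{\backslash i})^{\t}\vbXP^{\backslash i}$ has rank $\le n-1$ while $\vtSigmaP^{-1}\preceq\vI$ (because $\norm{\vtSigmaP^{-1}}_2 = (p/N)^{\kappa}\le1$), Weyl's inequality forces the $p-n+1$ largest eigenvalues of $\vbS_n^{\backslash i}$ to be $\ge1/\mu_n$. Combined with (iii), this gives $1+\tfrac1n\vg_i^{\t}\vbS_n^{\backslash i}\vg_i = \Thp(1/\mu_n)$ uniformly in $i$.

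Assembling the pieces: $\abs{A_i} = \tfrac{\mu_n}{n}\abs{\vg_i^{\t}\vbS_n^{\backslash i}\vg_i-\tr{\vbS_n^{\backslash i}}} = \tfrac{\mu_n}{n}\cdot\Op(\sqrt{N\ln N}/\mu_n) = \Op(\sqrt{\ln N/N})$, so $\sup_i\abs{A_i} = \Op(\ln N/\sqrt{N})$. For $B_i$, the numerator satisfies $\norm{\vbS_n^{\backslash i}\vg_i}^2\le\norm{\vbS_n^{\backslash i}}_2^2\norm{\vg_i}^2 = \Op(N/\mu_n^2)$ by (i)--(ii), and the denominator is $\Thp(1/\mu_n)$, so $\abs{B_i} = \tfrac{\mu_n}{n^2}\cdot\Op(N/\mu_n^2)\cdot\Op(\mu_n) = \Op(N/n^2) = \Op(1/N)$, whence $\sup_i\abs{B_i} = \Op(1/N)$. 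Finally $\mu_n < 1/\sqrt{N}\le\ln N/\sqrt{N}$. Summing, $\sup_i\abs{\Delta_i} = \Op(\ln N/\sqrt{N})$.

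The main obstacle is exactly the one flagged in the remark following the proof sketch of \Cref{thm:p>n}: handled naively, $\norm{\vtSigmaP}_2\to\infty$ and $\norm{\vbS_n^{\backslash i}}_2\to\infty$ yield only $\sup_i\abs{\Delta_i} = \Op(1)$ (or worse). The resolution is the bookkeeping that each blown-up factor $1/\mu_n$ is matched by a $\mu_n$, coming either from the prefactor $\mu_n/n$ (for $A_i$) or from the large denominator $1+\tfrac1n\vtx_i^{\t}\vtS_n^{\backslash i}\vtx_i\asymp 1/\mu_n$ (for $B_i$). Making the ``large denominator'' claim rigorous and uniform in $i$ --- i.e.\ the eigenvalue count resting on the rank deficiency of $(\vbXP^{\backslash i})^{\t}\vbXP^{\backslash i}$ and $\vtSigmaP^{-1}\preceq\vI$, upgraded by the quadratic-form concentration --- together with the $\Op(1/\mu_n)$ operator-norm bound from \Cref{lem:eigen}, is where essentially all the work lies; the remaining steps are routine Gaussian concentration and union bounds.
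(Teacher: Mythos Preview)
Your proposal is correct and follows essentially the same route as the paper: the same three-term decomposition $\Delta_i = A_i + B_i - \mu_n$, the same $\Op(1/\mu_n)$ operator-norm control on $\vbS_n^{\backslash i}$ via \Cref{lem:eigen}, and the same Gaussian quadratic-form concentration for $A_i$. The one substantive difference is in how you bound $B_i$. The paper dispatches it in one line via the ratio inequality
\[
  \frac{\mu_n}{n}\cdot\frac{\vbx_i^{\t}(\vbS_n^{\backslash i})^2\vbx_i}{n+\vbx_i^{\t}\vbS_n^{\backslash i}\vbx_i}
  \ < \
  \frac{\mu_n}{n}\cdot\frac{\vbx_i^{\t}(\vbS_n^{\backslash i})^2\vbx_i}{\vbx_i^{\t}\vbS_n^{\backslash i}\vbx_i}
  \ \le \
  \frac{\mu_n}{n}\,\norm{\vbS_n^{\backslash i}}_2
  \ = \ \Op(1/n),
\]
which requires no separate control of the denominator. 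You instead bound numerator and denominator separately, and for the latter you need the extra deterministic eigenvalue-counting argument (rank deficiency plus $\vtSigmaP^{-1}\preceq\vI$) to show $1+\tfrac1n\vg_i^{\t}\vbS_n^{\backslash i}\vg_i = \Omp(1/\mu_n)$. Your argument is correct, but the paper's ratio trick is shorter and avoids that detour entirely; it is worth internalizing for future leave-one-out computations.
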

\begin{proof}
Let us introduce $\vbS_n^{\backslash i}=\vtSigmaP^{1/2}\vS_n^{\backslash i}\vtSigmaP^{1/2}$ and $\vbx_i=\vtSigmaP^{-1/2}\vtx_i$. Then,  
\BE
	\vbS_n^{\backslash i}=\left(\frac{1}{n}\vbXP^{\t}\vbXP-\frac{1}{n}\vbx_i\vbx_i^{\t}+\mu_n\vtSigmaP^{-1}\right)^{-1}
	,	\label{eq:36}
\EE
where $\vbx_i$ is the $i$th row of $\vbXP$. Further, we have 
\[
	\Delta_i = \frac{\mu_n}{n}\trace{\vbS_n}-\frac{\mu_n}{n}\vbx_i^{\t}\vbS_n^{\backslash i}\vbx_i-\mu_n.
\] 
To bound $|\Delta_i|$, we can decompose $\Delta_i$ into three parts:
\[
	\Delta_i \= \left(\frac{\mu_n}{n}\trace{\vbS_n}-\frac{\mu_n}{n}\trace{\vbS_n^{\backslash i}}\right)+\left(\frac{\mu_n}{n}\trace{\vbS_n^{\backslash i}}-\frac{\mu_n}{n}\vbx_i^{\t}\vbS_n^{\backslash i}\vbx_i\right)-\mu_n
\]
Intuitively, the first part should be small since $\vbS_n$ and $\vbS_n^{\backslash i}$ only differ at one sample. For the second part, since $\vbx_i$ is independent of $\vbS_n^{\backslash i}$, the law of large numbers implies that it should be small as well. Finally, we have $\mu_n\rightarrow 0$. We now make these arguments rigorous. By \Cref{lem:eigen} again, we have 
\BE
	\max\left(\|\vbS_n\|_2, \max_{i}\|\vbS_n^{\backslash i}\|_2\right)
	&\leq& \Op\left(\frac{1}{\mu_n}\right)
	.	\label{eq:maximum}
\EE
Then, we can show that the difference between $\frac{\mu_n}{n}\trace{\vbS_n}$ and $\frac{\mu_n}{n}\trace{\vbS_n^{\backslash i}}$ is small. Note that, by the Sherman-Morrison formula,
\BE
	\sup_i\left|\frac{\mu_n}{n}\trace{\vbS_n}-\frac{\mu_n}{n}\trace{\vbS_n^{\backslash i}}\right|
	&=&
	\sup_i\left|\frac{\mu_n}{n}\trace{\frac{\vbS_n^{\backslash i}\vbx_i\vbx_i^{\t}\vbS_n^{\backslash i}}{n+\vbx_i^{\t}\vbS_n^{\backslash i}\vbx_i}}\right|
	\=
	\sup_i\frac{1}{n}\frac{\mu_n\vbx_i^{\t}\left(\vbS_n^{\backslash i}\right)^2\vbx_i}{n+\vbx_i^{\t}\vbS_n^{\backslash i}\vbx_i}
		\n\\
	\ < 
	\sup_i\frac{1}{n} \frac{\mu_n\vbx_i^{\t}\left(\vbS_n^{\backslash i}\right)^2\vbx_i}{\vbx_i^{\t}\vbS_n^{\backslash i}\vbx_i}
	& \leq &
	\sup_i\frac{\mu_n}{n}\cdot \Op\left(\frac{1}{\mu_n}\right)\cdot \frac{\vbx_i^{\t}\vbS_n^{\backslash i}\vbx_i}{\vbx_i^{\t}\vbS_n^{\backslash i}\vbx_i}
	\=
	\Op\left(\frac{1}{n}\right)
	.	\n
\EE
Then we want to show the difference between $\frac{\mu_n}{n}\trace{\vbS_n^{\backslash i}}$ and $\frac{\mu_n}{n}\vbx_i^{\t}\vbS_n^{\backslash i}\vbx_i$ is small. Note that $\vbx_i^{\t}$ is a standard Gaussian vector and it is independent of $\vbS_n^{\backslash i}$. Hence, the expectation of $\frac{\mu_n}{n}\vbx_i^{\t}\vbS_n^{\backslash i}\vbx_i$ is given by $\frac{\mu_n}{n}\trace{\vbS_n^{\backslash i}}$. Further, by standard $\chi^2$ tail bounds \citep{laurent2000adaptive}, we have
\BE
	\P\left(\max_{i}\left|\frac{\mu_n}{n}\vbx_i^{\t}\vbS_n^{\backslash i}\vbx_i-\frac{\mu_n}{n}\trace{\vbS_n^{\backslash i}}\right|\geq \frac{2\mu_n p}{n}(\epsilon+\epsilon^2)\|\vbS_n^{\backslash i}\|\right)
	&\leq& e^{-\epsilon^2 p}
	.	\label{eq:lemSt_eq3}
\EE
Choose $\epsilon=\frac{\log n}{\sqrt{p}}$, we know 
\BE
	\sup_i\left|\frac{\mu_n}{n}\vbx_i^{\t}\vbS_n^{\backslash i}\vbx_i-\frac{\mu_n}{n}\trace{\vbS_n^{\backslash i}}\right|
	\=
	\Op\left(\frac{\ln N}{\sqrt{N}}\right)
	. \label{eq:similarpart1}
\EE
Hence, we have
\[
	|\Delta_i|\leq \sup_i\left|\frac{\mu_n}{n}\trace{\vbS_n}-\frac{\mu_n}{n}\trace{\vbS_n^{\backslash i}}\right|+\sup_i\left|\frac{\mu_n}{n}\vbx_i^{\t}\vbS_n^{\backslash i}\vbx_i-\frac{\mu_n}{n}\trace{\vbS_n^{\backslash i}}\right|+|\mu_n|
	\= \Op\left(\frac{\ln N}{\sqrt{N}}\right).
\]
\end{proof}

\subsection{Analysis of part 2}\label{app:part2}

In this section, we will prove that
\BE
  \text{part 2} 
	\ \ipt \ 
	N^{1-\kappa}\cdot \frac{m_{\kappa}'(0)}{m^2_{\kappa}(0)} \cdot \int_{\ratiop}^1t^{\kappa-2}\dif t+\op(N^{1-\kappa})
  .
  \n
\EE
We apply a proof similar to that of \Cref{thm:p<n} in \Cref{app:p<n}.
The conditional expectation of part 2 given $\vXP$ is 
\BE
  \bbE[\text{part 2} \mid \vXP ]&=&\trace{\vSigmapC}\cdot\left(\trace{\vSigmaP\vXP^{\t}\left(\vXP\vXP^{\t}\right)^{-2}\vXP}+1\right)
	.	\label{eq:p>npart2_expect}
\EE
(This expectation only conditions on $\vXP$; in particular, it averages over $\vXpC$.)
The variance of part 2 given $\vXP$ is
\BE
	\var\ (\text{part 2}~|\vXP) 
	&\leq&
	2\cdot\trace{\vSigmapC^2}\cdot\left\|\left(\vXP\vXP^{\t}\right)^{-1}\vXP\vSigmaP\vXP^{\t}\left(\vXP\vXP^{\t}\right)^{-1}\right\|_F^2
		\n\\
	&=&
	2\cdot\trace{\vSigmapC^2}\cdot\trace{\left(\vSigmaP\vXP^{\t}\left(\vXP\vXP^{\t}\right)^{-2}\vXP\right)^{2}}
	.	\label{eq:p>npart2var}
\EE
Let
\[ \psi := \trace{\vSigmaP\vXP^{\t}\left(\vXP\vXP^{\t}\right)^{-2}\vXP} . \]
Then by Markov's inequality, we have
\BE
	\text{part 2}&=& \trace{\vSigmapC}\cdot (\psi+1)+\Op\left(\frac{\sqrt{N-p}}{N^{\kappa}}\cdot \psi\right)
	.	\label{eq:p>npart2}
\EE
By \eqref{eq:tracep<n2}, we have
\BE
	\trace{\vSigmapC} 
	&\rightarrow&
	N^{1-\kappa}\int_{\ratiop}^1t^{-\kappa}\dif t
	.\n
\EE
Hence, we just need to show
\BE
	\psi+1&\ipt&\frac{m_{\kappa}'(0)}{m^2_{\kappa}(0)}, \label{eq:p>npart2goal}
\EE
as this will imply
\[
	\text{part 2} \ \ipt \ N^{1-\kappa}\cdot \frac{m_{\kappa}'(0)}{m^2_{\kappa}(0)} \cdot \int_{\ratiop}^1t^{-\kappa}\dif t+\op(N^{1-\kappa})
\]
as required.

To prove \eqref{eq:p>npart2goal}, let us first rescale $\vSigma$ to $\vtSigma$ and introduce the positive sequence $(\mu_n)_{n\geq1}$ just like what we did for part 1, and with foresight, we pick the sequence such that
\[ \mu_n = o(N^{-\kappa}) . \]
Then we have
\BE
	\psi+1
	&=& 
	\trace{\vtSigmaP\vtXP^{\t}\left(\vtXP\vtXP^{\t}\right)^{-2}\vtXP}+1
		\n\\
	&=&
	\frac{1}{n}\trace{\vtSigmaP\left(\frac{1}{n}\vtXP^{\t}\vtXP+\mu_n\vI\right)^{-1}\left(\frac{1}{n}\vtXP^{\t}\vtXP\right)\left(\frac{1}{n}\vtXP^{\t}\vtXP+\mu_n\vI\right)^{-1}}+\epsilon'_{\mu_n}+1
		\n\\
	&=&
	\frac{1}{n}\trace{\vtSigmaP\vtS_n}-\frac{\mu_n}{n}\trace{\vtSigmaP\vtS_n^2}+1+\epsilon'_{\mu_n}
	,	\n
\EE
where $\epsilon'_{\mu_n}$ is given by
\BE
  \epsilon'_{\mu_n}
	&=&
	\frac{1}{n}\trace{\vtSigmaP\frac{1}{\sqrt{n}}\vtXP^{\t}\left(\frac{1}{n}\vtXP\vtXP^{\t}\right)^{-2}\frac{1}{\sqrt{n}}\vtXP}
		\n\\
	&&-\frac{1}{n}\trace{\vtSigmaP\left(\frac{1}{n}\vtXP^{\t}\vtXP+\mu_n\vI\right)^{-1}\left(\frac{1}{n}\vtXP^{\t}\vtXP\right)\left(\frac{1}{n}\vtXP^{\t}\vtXP+\mu_n\vI\right)^{-1}}
	.	\n	
\EE
We shall prove the following:
\BE
  |\epsilon'_{\mu_n}|
  & = & \op(1) ,
  \label{eq:p>npart2goal0}
\EE
and
\BE
	\frac{1}{n}\trace{\vtSigmaP\vtS_n}-\frac{\mu_n}{n}\trace{\vtSigmaP\vtS_n^2}+1
	&\ipt&
	\frac{m_{\kappa}'(0)}{m^2_{\kappa}(0)}
	,	\label{eq:p>npart2goal1}
\EE
which suffices to establish \eqref{eq:p>npart2goal}.

\subsubsection{Proof of Equation~\eqref{eq:p>npart2goal0}}

To bound $|\epsilon_{\mu_n}'|$, note that
\BE
	|\epsilon_{\mu_n}'|
	&\stackrel{\text{(i)}}{\leq}&
	\frac{1}{n}\|\vtSigmaP\|_2\left(\trace{\frac{1}{\sqrt{n}}\vtXP^{\t}\left(\frac{1}{n}\vtXP\vtXP^{\t}\right)^{-2}\frac{1}{\sqrt{n}}\vtXP}\right.
		\n\\
	&&
	\left.-\trace{\left(\frac{1}{n}\vtXP^{\t}\vtXP+\mu_n\vI\right)^{-1}\left(\frac{1}{n}\vtXP^{\t}\vtXP\right)\left(\frac{1}{n}\vtXP^{\t}\vtXP+\mu_n\vI\right)^{-1}}\right)
		\n\\
	&\leq&
	\frac{N^{\kappa}}{n}\cdot \sum_{i=1}^n\frac{\mu_n(2\tsigma_i+\mu_n)}{(\tsigma_i+\mu_n)^2\tsigma_i}
	\ \leq \
	2N^{\kappa}\cdot \frac{\mu_n}{\min_i(\tsigma_i^2)}
	,	\label{eq:elambdapart2}
\EE
where $\tsigma_i$ is the $i$-th eigenvalue of $\frac{1}{n}\vtXP\vtXP^{\t}$ and inequality (i) holds due to the fact that
\[
\frac{1}{\sqrt{n}}\vtXP^{\t}\left(\frac{1}{n}\vtXP\vtXP^{\t}\right)^{-2}\frac{1}{\sqrt{n}}\vtXP-\left(\frac{1}{n}\vtXP^{\t}\vtXP+\mu_n\vI\right)^{-1}\left(\frac{1}{n}\vtXP^{\t}\vtXP\right)\left(\frac{1}{n}\vtXP^{\t}\vtXP+\mu_n\vI\right)^{-1} \] 
is positive semi-definite.
By \Cref{lem:St}, \Cref{lem:dH}, and \eqref{eq:elambdapart2}, since $\mu_n=o(N^{-\kappa})$, we have
\BE
	|\epsilon'_{\mu_n}| &=& \op(1).	\n 
\EE

\subsubsection{Proof of Equation~\eqref{eq:p>npart2goal1}}

We now prove
\BE
	\frac{1}{n}\trace{\vtSigmaP\vtS_n}-\frac{\mu_n}{n}\trace{\vtSigmaP\vtS_n^2}+1
	&\ipt&
	\frac{m_{\kappa}'(0)}{m^2_{\kappa}(0)}
	. \n
\EE
Towards this goal, we employ a strategy similar to the proof of \eqref{eq:p>ngoal2}. Using the identity $\vtS_n^{-1}-\mu_n\vI=\frac{1}{n}\sum_{i=1}^n\vtx_i\vtx_i^{\t}$, we have
\BE
	\frac{1}{n}\sum_{i=1}^n\vtx_i^{\t}\vtS_n^2\vtx_i
	&=&
	\frac{1}{n}\trace{\sum_{i=1}^n\vtS_n^2\vtx_i\vtx_i^{\t}}
		\n\\
	&=&
	\trace{\vtS_n^2(\vtS_n^{-1}-\mu_n\vI)}
		\n\\	
	&=&
	\trace{\vtS_n-\mu_n \vtS_n^2}
	.	\n
\EE  
With \eqref{eq:p>npart1bi}, we have
\BE
	\trace{\vtS_n-\mu_n \vtS_n^2}
	&=&
	\frac{1}{n}\sum_{i=1}^n\vtx_i^{\t}\vtS_n^2\vtx_i \n
  \\
  & = &
	\frac{1}{n}\sum_{i=1}^n\vtx_i^{\t}\left(\vtS_n^{\backslash i}
	-\frac{1}{n}\cdot\frac{\vtS_n^{\backslash i}\vtx_i\vtx_i^{\t}\vtS_n^{\backslash i}}{1+\frac{1}{n}\vtx_i^{\t}\vtS_n^{\backslash i}\vtx_i}
\right)^2\vtx_i
		\n\\
	&=&
  \sum_{i=1}^n\frac{\frac{1}{n}\vtx_i^{\t}\del[2]{\vtS_n^{\backslash i}}^2\vtx_i}{\left(1+\frac{1}{n}\vtx_i^{\t}\vtS_n^{\backslash i}\vtx_i\right)^2}
	.	\label{eq:p>npart2eq_1}
\EE
Note that $\frac{1}{n}\trace{\vtS_n-\mu_n \vtS_n^2}=m_n(-\mu_n)-\mu_nm_n'(-\mu_n)$. With \eqref{eq:p>neq_equal} and \eqref{eq:p>npart2eq_1}, we have
\BE
	-\mu_nm_n'(-\mu_n)
	&=&
	\frac{1}{n}\sum_{i=1}^n\frac{\frac{1}{n}\vtx_i^{\t}\left(\vtS_n^{\backslash i}\right)^2\vtx_i}{\left(1+\frac{1}{n}\vtx_i^{\t}\vtS_n^{\backslash i}\vtx_i\right)^2}-m_n(-\mu_n)
		\n\\
	&=&
	\frac{1}{n}\sum_{i=1}^n\frac{\frac{1}{n}\vtx_i^{\t}\left(\vtS_n^{\backslash i}\right)^2\vtx_i}{\left(1+\frac{1}{n}\vtx_i^{\t}\vtS_n^{\backslash i}\vtx_i\right)^2}-\frac{1}{n}\sum_{i=1}^n\frac{1}{\mu_n+\frac{\mu_n}{n}\vtx_i^{\t}\vtS_n^{\backslash i}\vtx_i}
		\n\\
	&=&
	\mu_n\cdot \frac{1}{n}\sum_{i=1}^n\frac{\frac{\mu_n}{n}\vtx_i^{\t}\left(\vtS_n^{\backslash i}\right)^2\vtx_i-1-\frac{1}{n}\vtx_i^{\t}\vtS_n^{\backslash i}\vtx_i}{\left(\mu_n+\frac{\mu_n}{n}\vtx_i^{\t}\vtS_n^{\backslash i}\vtx_i\right)^2}
	.	\n
\EE
Hence, we have
\BE
	m_n'(-\mu_n)
	&=&
	\frac{1}{n}\sum_{i=1}^n\frac{1+\frac{1}{n}\vtx_i^{\t}\left(\vtS_n^{\backslash i}-\mu_n\left(\vtS_n^{\backslash i}\right)^2\right)\vtx_i}{\left(\mu_n+\frac{\mu_n}{n}\vtx_i^{\t}\vtS_n^{\backslash i}\vtx_i\right)^2}
	.	\label{eq:p>npart2eq_2}
\EE
Note that
\[ |m_n'(-\mu_n)-m_n'(0)|\leq \frac{2\mu_n }{\min(\tilde{\lambda}_i^3)} , \]
where $\tsigma_1,\dotsc,\tsigma_n$ are the eigenvalues of $\frac{1}{n}\vtXP\vtXP^{\t}$.
Therefore, by \Cref{lem:St}, we have
\[
	m_n'(-\mu_n)\ = \  m_n'(0)+ \Op(\mu_n) \ \ipt \ m_{\kappa}'(0).
\]
From \eqref{eq:p>ngoal2}, \Cref{claim:p>ngoal4}, and \Cref{claim:p>ngoal5}, we know that
\BE
	\left(\mu_n+\frac{\mu_n}{n}\vtx_i^{\t}\vtS_n^{\backslash i}\vtx_i\right)^2
	&\ipt&
	\frac{1}{m^2_{\kappa}(0)}>0
	.	\label{eq:p>npart2eq_3}
\EE
We claim that
\BE
	\frac{1}{n}\trace{\vtSigmaP\vtS_n-\mu_n\vtSigmaP\vtS_n^2}
	&=&
	\Op(1) 
  ,
		\label{eq:p>npart2goal2}\\
	\frac{1}{n}\vtx_i^{\t}\left(\vtS_n^{\backslash i}-\mu_n\left(\vtS_n^{\backslash i}\right)^2\right)\vtx_i
	&=&
	\frac{1}{n}\trace{\vtSigmaP\vtS_n-\mu_n\vtSigmaP\vtS_n^2}+\Op\left(\frac{\ln N}{\sqrt{N}}\right)
	\label{eq:p>npart2goal3}
\EE
(\Cref{prop:p>npart2goal2} and \Cref{prop:p>npart2goal3} below).
So, we obtain from \eqref{eq:p>npart2eq_2}
\BE
m_n'(-\mu_n) \= \frac{1}{n}\sum_{i=1}^n\frac{1+\frac{1}{n}\vtx_i^{\t}\left(\vtS_n^{\backslash i}-\mu_n\left(\vtS_n^{\backslash i}\right)^2\right)\vtx_i}{\left(\mu_n+\frac{\mu_n}{n}\vtx_i^{\t}\vtS_n^{\backslash i}\vtx_i\right)^2}
	&\ipt&
	\frac{1+\frac{1}{n}\trace{\vtSigmaP\vtS_n-\mu_n\vtSigmaP\vtS_n^2}}{1/m_{\kappa}(0)^2}
	,	\n
\EE
i.e.,
\[
  \frac{m_n'(-\mu_n)}{m_\kappa(0)^2}
  \ipt
  1+\frac{1}{n}\trace{\vtSigmaP\vtS_n-\mu_n\vtSigmaP\vtS_n^2}
  .
\]
This suffices to prove \eqref{eq:p>npart2goal1} as required.

\subsubsection{Supporting propositions}

\begin{proposition}
  \label{prop:p>npart2goal2}
  \[ \frac{1}{n}\trace{\vtSigmaP\vtS_n-\mu_n\vtSigmaP\vtS_n^2}
	\=
	\Op(1) 
  .
\]
\end{proposition}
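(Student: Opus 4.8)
The plan is to use two facts: $\vtS_n-\mu_n\vtS_n^2$ is positive semidefinite, so only an upper bound on this (hence nonnegative) quantity is needed; and the ill-conditioned diagonal matrix $\vtSigmaP$ can be ``absorbed'' into the resolvent by conjugation, becoming $\vtSigmaP^{-1}$, all of whose eigenvalues lie in $(0,1]$. Since $\vtS_n=(\tfrac1n\vtXP^{\t}\vtXP+\mu_n\vI)^{-1}$ commutes with $\tfrac1n\vtXP^{\t}\vtXP$, we have $\vtS_n-\mu_n\vtS_n^2=\vtS_n(\tfrac1n\vtXP^{\t}\vtXP)\vtS_n\succeq 0$. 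Put $\vbXP:=\vtXP\vtSigmaP^{-1/2}$ (a standard Gaussian matrix, as in the proofs of \Cref{claim:p>ngoal4,claim:p>ngoal5}), $\bar\vM:=\tfrac1n\vbXP^{\t}\vbXP$, and $\vbS_n:=\vtSigmaP^{1/2}\vtS_n\vtSigmaP^{1/2}=(\bar\vM+\mu_n\vtSigmaP^{-1})^{-1}$; conjugating by $\vtSigmaP^{1/2}$ and using $\vtSigmaP^{-1/2}(\tfrac1n\vtXP^{\t}\vtXP)\vtSigmaP^{-1/2}=\bar\vM$ gives
\[
  \frac1n\tr{\vtSigmaP\vtS_n-\mu_n\vtSigmaP\vtS_n^2} \= \frac1n\tr{\vbS_n\bar\vM\vbS_n} \= \frac1n\tr{\bar\vM\vbS_n^2} \ \geq\ 0 .
\]
The gain is that the ``outer'' factor is now the well-behaved Gram matrix $\bar\vM$ (of rank $n$, since $p>n$), while the singular $\vtSigmaP^{-1}$ enters only through the resolvent.

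Next I would diagonalize $\bar\vM=\sum_{j=1}^n\tau_j\vv_j\vv_j^{\t}$ with $\tau_j>0$, so that $\tfrac1n\tr{\bar\vM\vbS_n^2}=\tfrac1n\sum_{j=1}^n\tau_j\|\vbS_n\vv_j\|^2$. The crux is the per-eigenvector estimate $\|\vbS_n\vv_j\|\leq K/\tau_j$ with $K=\Op(1)$: from $\vbS_n^{-1}\vv_j=\tau_j\vv_j+\mu_n\vtSigmaP^{-1}\vv_j$ one obtains, applying $\vbS_n$, that $\vbS_n\vv_j=\tau_j^{-1}(\vv_j-\mu_n\vbS_n\vtSigmaP^{-1}\vv_j)$, whence $\|\vbS_n\vv_j\|\leq K/\tau_j$ where $K:=1+\mu_n\|\vbS_n\|_2\|\vtSigmaP^{-1}\|_2$; here $\|\vtSigmaP^{-1}\|_2=(p/N)^{\kappa}\leq 1$ and $\mu_n\|\vbS_n\|_2=\Op(1)$ by \Cref{lem:eigen} applied with $\vSigma=\vtSigmaP^{-1}$ (equivalently, $\|\vbS_n\|_2=\Op(1/\mu_n)$, exactly as in the proof of \Cref{claim:p>ngoal4}), so $K=\Op(1)$. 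Consequently
\[
  \frac1n\tr{\vtSigmaP\vtS_n-\mu_n\vtSigmaP\vtS_n^2} \= \frac1n\sum_{j=1}^n\tau_j\|\vbS_n\vv_j\|^2 \ \leq\ \frac{K^2}{n}\sum_{j=1}^n\frac1{\tau_j} \= K^2\cdot\frac1n\tr{\Parens{\tfrac1n\vbXP\vbXP^{\t}}^{-1}} \= \Op(1) ,
\]
since $p/n\to\ratiop/\ration>1$ makes $\tfrac1n\tr{(\tfrac1n\vbXP\vbXP^{\t})^{-1}}$ converge to a finite constant (Marchenko--Pastur; cf.\ \cite{tulino2004random}).

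I expect the main obstacle to be the one flagged in the remark after \Cref{thm:p>n}: the naive estimate $\tfrac1n\tr{\vtSigmaP(\vtS_n-\mu_n\vtS_n^2)}\leq\|\vtSigmaP\|_2\cdot\tfrac1n\tr{\vtS_n-\mu_n\vtS_n^2}$ bleeds a factor $\|\vtSigmaP\|_2=N^{\kappa}\to\infty$ and is useless. The conjugation step---equivalently, the identity $\mu_n\vtSigmaP^{-1}\vbS_n=\vI-\bar\vM\vbS_n$, which eliminates the explicit $\vtSigmaP^{-1}$ from the trace---is what trades that catastrophic factor for the bounded $\tfrac1n\tr{(\tfrac1n\vbXP\vbXP^{\t})^{-1}}$. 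After that, the only inputs are $\mu_n\|\vbS_n\|_2=\Op(1)$ (from \Cref{lem:eigen}, the genuinely delicate estimate, already proven) and a standard Wishart inverse-moment; the new work here is just verifying that $\vbS_n$ applied to an eigenvector $\vv_j$ of $\bar\vM$ has norm $\Op(1/\tau_j)$.
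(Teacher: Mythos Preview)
Your argument is correct. Both proofs start from the same conjugation identity
\[
  \frac{1}{n}\trace{\vtSigmaP\vtS_n-\mu_n\vtSigmaP\vtS_n^2}
  \=
  \frac{1}{n}\trace{\vbS_n\Parens{\tfrac{1}{n}\vbXP^{\t}\vbXP}\vbS_n},
\]
which absorbs the ill-conditioned $\vtSigmaP$ into the resolvent. From here the two proofs diverge. The paper bounds the \emph{operator norm} $\Norm{\vbS_n\bar\vM\vbS_n}_2$: it diagonalizes $\bar\vM=\vU\vLambda\vU^{\t}$, introduces the shift $\vLambda_\phi=\vLambda+\tfrac{\mu_n\phi_n}{2}\vI$ to split off a nonsingular diagonal part, applies a Woodbury/Sherman--Morrison expansion, and reduces to $\|\vLambda_\phi^{-1}\vLambda\vLambda_\phi^{-1}\|_2\leq 1/\Lambda_{n,n}$, which is $\Op(1)$ by \Cref{lem:eigenprevious}. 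You instead bound the trace directly via the per-eigenvector estimate $\|\vbS_n\vv_j\|\leq K/\tau_j$ with $K=1+\mu_n\|\vbS_n\|_2\|\vtSigmaP^{-1}\|_2=\Op(1)$, and then invoke $\tfrac{1}{n}\sum_{j=1}^n 1/\tau_j=\Op(1)$.

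Your route is shorter and more transparent for the proposition as stated. The paper's route, however, yields the stronger conclusion $\Norm{\vbS_n\bar\vM\vbS_n}_2=\Op(1)$ (Equation~\eqref{eq:eigenpart2}), which is not incidental: the same operator-norm bound, in its leave-one-out form \eqref{eq:eigenpart2_eq2}, is reused in the proof of \Cref{prop:p>npart2goal3} to control $\sup_i\|\vM_i\|_2$ and to apply the $\chi^2$ concentration argument behind \eqref{eq:twoarguments1}. Your per-eigenvector technique, as written, does not immediately deliver this operator-norm bound, so if you proceed to \Cref{prop:p>npart2goal3} you would need to supply it separately.
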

\begin{proof}
Recall that 
\[
	\vbS_n\=\vtSigmaP^{1/2}\vtS_n\vtSigmaP^{1/2}\=\left(\frac{1}{n}\vbXP^{\t}\vbXP+\mu_n\vtSigmaP^{-1}\right)^{-1}
	,
\]
where $\vbXP=\vtXP\vtSigmaP^{-1/2}$ is a standard Gaussian matrix. Let $\frac{1}{n}\vbXP^{\t}\vbXP=\vU\vLambda\vU^{\t}$ be the singular value decomposition of $\frac{1}{n}\vbXP^{\t}\vbXP$, where $\vU\vU^{\t}=\vI$ and $\vLambda$ is a diagonal matrix with 
\[ \Lambda_{1,1}\geq \Lambda_{2,2} \geq \dotsb \geq \Lambda_{n,n} \geq \Lambda_{n+1,n+1}=\dotsb=\Lambda_{p,p}=0 . \]
Hence, we have
\BE
	\vtSigmaP^{1/2}\vtS_n\vtSigmaP^{1/2}-\mu_n\vtSigmaP^{1/2}\vtS_n^2\vtSigmaP^{1/2}
	&=&
	\vbS_n\left(\frac{1}{n}\vbXP^{\t}\vbXP\right)\vbS_n
		\n\\
	&=&
	\left(\vLambda+\mu_n\vU^{\t}\vtSigmaP^{-1}\vU\right)^{-1}\vLambda\left(\vLambda+\mu_n\vU^{\t}\vtSigmaP^{-1}\vU\right)^{-1}
	.	\n
\EE
Our next step is to bound the maximum eigenvalue of 
\[	
	\left(\vLambda+\mu_n\vU^{\t}\vtSigmaP^{-1}\vU\right)^{-1}\vLambda\left(\vLambda+\mu_n\vU^{\t}\vtSigmaP^{-1}\vU\right)^{-1}
	.
\]
Let $\phi_n$ be the smallest eigenvalue of $\vtSigmaP^{-1}$. Define $\vLambda_{\phi}=\vLambda+\frac{\mu_n\phi_n}{2}\vI$ and $\vSigma_{\phi}^{-1}=\mu_n(\vtSigmaP^{-1}-\frac{\phi_n}{2}\vI)$. Then $\vLambda_{\phi}$ and $\vSigma_{\phi}$ are two positive definite diagonal matrices. Intuitively, for $\mu_n$ small enough,
\[
	\left(\vLambda+\mu_n\vU^{\t}\vtSigmaP^{-1}\vU\right)^{-1}\vLambda\left(\vLambda+\mu_n\vU^{\t}\vtSigmaP^{-1}\vU\right)^{-1}
  \approx \vLambda_{\phi}^{-1}\vLambda\vLambda_{\phi}^{-1} , \]
the latter having a maximum eigenvalue bounded by a constant.
We now make this argument rigorous. By the Sherman-Morrision formula, we have
\BE
	\left(\vLambda+\mu_n\vU^{\t}\vtSigmaP^{-1}\vU\right)^{-1}
  &=&
	\left(\vLambda_{\phi}+\vU^{\t}\vSigma_{\phi}^{-1}\vU\right)^{-1}
  \n
  \\
  &=&
	\vLambda_{\phi}^{-1}-\vLambda_{\phi}^{-1}\vU^{\t}\left(\vSigma_{\phi}+\vU^{\t}\vLambda_{\phi}^{-1}\vU\right)^{-1}\vU\vLambda_{\phi}^{-1}
	.	\n
\EE
Hence, we know
\BE
	\lefteqn{\left\|\left(\vLambda+\mu_n\vU^{\t}\vtSigmaP^{-1}\vU\right)^{-1}\vLambda\left(\vLambda+\mu_n\vU^{\t}\vtSigmaP^{-1}\vU\right)^{-1}\right\|_2}
		\n\\
	&\leq&
	2\left\|\vLambda_{\phi}^{-1}\vLambda\vLambda_{\phi}^{-1}\right\|_2
		\n\\
	&&+2\left\|\vLambda_{\phi}^{-1}\vU^{\t}\left(\vSigma_{\phi}+\vU^{\t}\vLambda_{\phi}^{-1}\vU\right)^{-1}\vU\vLambda_{\phi}^{-1}\vLambda\vLambda_{\phi}^{-1}\vU^{\t}\left(\vSigma_{\phi}+\vU^{\t}\vLambda_{\phi}^{-1}\vU\right)^{-1}\vU\vLambda_{\phi}^{-1}\right\|_2
		\n\\
	&\leq&
	2\left\|\vLambda_{\phi}^{-1}\vLambda\vLambda_{\phi}^{-1}\right\|_2\left(1+\left\|\vLambda_{\phi}^{-1}\vU^{\t}\left(\vSigma_{\phi}+\vU^{\t}\vLambda_{\phi}^{-1}\vU\right)^{-1}\vU\vU^{\t}\left(\vSigma_{\phi}+\vU^{\t}\vLambda_{\phi}^{-1}\vU\right)^{-1}\vU\vLambda_{\phi}^{-1}\right\|_2\right)
		\n\\
	&=&
	2\left\|\vLambda_{\phi}^{-1}\vLambda\vLambda_{\phi}^{-1}\right\|_2\left(1+\left\|\vLambda_{\phi}^{-1}\left(\vU\vSigma_{\phi}\vU^{\t}+\vLambda_{\phi}^{-1}\right)^{-2}\vLambda_{\phi}^{-1}\right\|_2\right)
		\n\\
	&=&
	2\left\|\vLambda_{\phi}^{-1}\vLambda\vLambda_{\phi}^{-1}\right\|_2\left(1+\left\|\left(\vLambda_{\phi}\left(\vU\vSigma_{\phi}\vU^{\t}+\vLambda_{\phi}^{-1}\right)^{2}\vLambda_{\phi}\right)^{-1}\right\|_2\right)
		\n\\
	&=&
	2\left\|\vLambda_{\phi}^{-1}\vLambda\vLambda_{\phi}^{-1}\right\|_2\left(1+\left\|\left(\vI+\vLambda_{\phi}\left(\vU\vSigma_{\phi}\vU^{\t}\vLambda_{\phi}^{-1}+\vLambda_{\phi}^{-1}\vU\vSigma_{\phi}\vU^{\t}+(\vU\vSigma_{\phi}\vU^{\t})^2\right)\vLambda_{\phi}\right)^{-1}\right\|_2\right)
		\n\\
	&\leq&
	4\left\|\vLambda_{\phi}^{-1}\vLambda\vLambda_{\phi}^{-1}\right\|_2
	.	\n
\EE
Note that $\vLambda_{\phi}$ and $\vLambda$ are both diagonal. Hence, we have 
\[
	\|\vLambda_{\phi}^{-1}\vLambda\vLambda_{\phi}^{-1}\|_2\=\max_{1\leq i\leq n}\frac{\Lambda_{i,i}}{\left(\Lambda_{i,i}+\frac{\phi_n\mu_n}{2}\right)^2}\leq \frac{1}{\Lambda_{n,n}}.
\]
To lower bound $\Lambda_{n,n}$, we use the following \namecref{lem:eigenprevious}.
\begin{lemma}[Lemma 10 of \citealp{xu2019consistent}] \label{lem:eigenprevious}
Let $\vX\in \bbR^{n\times p}$ be a standard Gaussian random matrix, and let $\vx_i$ be the $i$-th row of the matrix $\vX$. Let $\rho=n/p>1$. There exist constants $c,c'>0$ such that for large enough $n$, with probability at least $1-c'(p^2+n^2)e^{-cn}$, the eigenvalues of
$\frac{1}{n}\vX^{\t}\vX$ and of
  $\frac{1}{n}(\vX^{\t}\vX-\vx_i\vx_i^{\t})$ for each $i=1,\dotsc,n$ are contained in the interval
  \[ \intoo{ \frac{1}{2}\cdot\min\cbr{(1-1/\sqrt{\rho})^2,1/\rho}, 9\rho^2 } . \]
\end{lemma}
Hence, by Lemma \ref{lem:eigenprevious}, we have $\Lambda_{n,n}\geq \frac{1}{2}\min((1-\sqrt{\ration/\ratiop})^2,\ration/\ratiop)>0$ hold with probability $1-c\cdot n^2\mathrm{exp}(-c'n)$ for some absolute constants $c,c'>0$. Hence, we have
\BE
	\left\|\vbS_n\left(\frac{1}{n}\vbXP^{\t}\vbXP\right)\vbS_n\right\|_2
	 \ \leq \ \Op(1) \label{eq:eigenpart2}
\EE
as required.
\end{proof}

\begin{proposition}
  \label{prop:p>npart2goal3}
  \[
	\frac{1}{n}\vtx_i^{\t}\left(\vtS_n^{\backslash i}-\mu_n\left(\vtS_n^{\backslash i}\right)^2\right)\vtx_i
	\=
	\frac{1}{n}\trace{\vtSigmaP\vtS_n-\mu_n\vtSigmaP\vtS_n^2}+\Op\left(\frac{\ln N}{\sqrt{N}}\right)
  .
\]
\end{proposition}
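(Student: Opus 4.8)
The plan is to imitate the proof of \Cref{claim:p>ngoal5}, the only new difficulty being the matrix power $(\vtS_n^{\backslash i})^2$. Since $\vtx_i=\vtSigmaP^{1/2}\vbx_i$ for a standard Gaussian vector $\vbx_i$ independent of $\vtS_n^{\backslash i}$, we have
\[
  \frac{1}{n}\vtx_i^{\t}\bigl(\vtS_n^{\backslash i}-\mu_n(\vtS_n^{\backslash i})^2\bigr)\vtx_i \= \frac{1}{n}\vbx_i^{\t}\vM_i\vbx_i,
  \qquad
  \vM_i \ := \ \vtSigmaP^{1/2}\bigl(\vtS_n^{\backslash i}-\mu_n(\vtS_n^{\backslash i})^2\bigr)\vtSigmaP^{1/2}.
\]
The crucial point is that $\vM_i=\vbS_n^{\backslash i}\bigl(\tfrac1n(\vbXP^{\backslash i})^{\t}\vbXP^{\backslash i}\bigr)\vbS_n^{\backslash i}$ --- the same computation as the one behind \eqref{eq:eigenpart2}, but with the leave-one-out Gram matrix ($\vbXP^{\backslash i}$ denotes $\vbXP$ with its $i$-th row removed). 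Hence $\vM_i\succeq 0$, and the proof of \Cref{prop:p>npart2goal2}, applied with $\vbXP$ replaced by $\vbXP^{\backslash i}$ and with \Cref{lem:eigenprevious} (which bounds the eigenvalues of $\tfrac1n\vbXP^{\t}\vbXP$ and of all the $\tfrac1n(\vbXP^{\backslash i})^{\t}\vbXP^{\backslash i}$ simultaneously), yields $\sup_i\|\vM_i\|_2=\Op(1)$, as well as $\|\vM\|_2=\Op(1)$ for the full-sample matrix $\vM:=\vtSigmaP^{1/2}(\vtS_n-\mu_n\vtS_n^2)\vtSigmaP^{1/2}=\vbS_n(\tfrac1n\vbXP^{\t}\vbXP)\vbS_n$.

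Given these norm bounds the argument proceeds in two routine steps. First, since $\vM_i\succeq 0$ is independent of $\vbx_i$, the $\chi^2$ tail bound of \citet{laurent2000adaptive} (as used in \eqref{eq:lemSt_eq3}) gives, for every $\epsilon>0$, $\Pr\bigl(|\tfrac1n\vbx_i^{\t}\vM_i\vbx_i-\tfrac1n\tr{\vM_i}|\geq\tfrac{2p}{n}(\epsilon+\epsilon^2)\|\vM_i\|_2\bigr)\leq e^{-\epsilon^2p}$; taking $\epsilon=\ln N/\sqrt N$, union bounding over $i=1,\dotsc,n$, and using $\sup_i\|\vM_i\|_2=\Op(1)$ together with $p\asymp n\asymp N$, I obtain $\sup_i|\tfrac1n\vbx_i^{\t}\vM_i\vbx_i-\tfrac1n\tr{\vM_i}|=\Op(\ln N/\sqrt N)$. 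Second, I compare $\tfrac1n\tr{\vM_i}=\tfrac1n\tr{\vtSigmaP(\vtS_n^{\backslash i}-\mu_n(\vtS_n^{\backslash i})^2)}$ with $\tfrac1n\tr{\vM}=\tfrac1n\tr{\vtSigmaP(\vtS_n-\mu_n\vtS_n^2)}$: writing $\vbS_n=\vbS_n^{\backslash i}-\tfrac{1}{n\beta_i}(\vbS_n^{\backslash i}\vbx_i)(\vbS_n^{\backslash i}\vbx_i)^{\t}$ (Sherman--Morrison, cf.\ \eqref{eq:p>npart1bi}) with $\beta_i:=1+\tfrac1n\vbx_i^{\t}\vbS_n^{\backslash i}\vbx_i$, and $\tfrac1n\vbXP^{\t}\vbXP=\tfrac1n(\vbXP^{\backslash i})^{\t}\vbXP^{\backslash i}+\tfrac1n\vbx_i\vbx_i^{\t}$, and expanding $\vM=\vbS_n(\tfrac1n\vbXP^{\t}\vbXP)\vbS_n$, every term of $\vM-\vM_i$ other than $\vM_i$ itself carries one of the two rank-one factors $\vbx_i\vbx_i^{\t}$ or $(\vbS_n^{\backslash i}\vbx_i)(\vbS_n^{\backslash i}\vbx_i)^{\t}$, so $\operatorname{rank}(\vM-\vM_i)\leq 7$ and hence $|\tr{\vM-\vM_i}|\leq 7\,(\|\vM\|_2+\|\vM_i\|_2)=\Op(1)$ uniformly in $i$, giving $\sup_i\tfrac1n|\tr{\vM_i}-\tr{\vM}|=\Op(1/n)=\op(\ln N/\sqrt N)$. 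Adding the two contributions and using $\tr{\vM}=\tr{\vtSigmaP\vtS_n-\mu_n\vtSigmaP\vtS_n^2}$ proves the proposition.

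The step I expect to be the main obstacle is the uniform bound $\sup_i\|\vM_i\|_2=\Op(1)$: both $\vtS_n^{\backslash i}$ (operator norm of order $1/\mu_n\to\infty$) and $\vtSigmaP$ (operator norm of order $N^{\kappa}\to\infty$) are ill-conditioned, and only the specific combination $\vtS_n^{\backslash i}-\mu_n(\vtS_n^{\backslash i})^2$, conjugated by $\vtSigmaP^{1/2}$, is well-behaved --- exactly the phenomenon already handled, for a single matrix, in \Cref{prop:p>npart2goal2}; the extra work here is bookkeeping to make the estimate uniform over the $n$ leave-one-out matrices, which is what \Cref{lem:eigenprevious} is for. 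The rank argument in the second step is a secondary subtlety of the same flavour: the individual Sherman--Morrison corrections to $\tr{\vM}$ diverge like $1/\mu_n$, and it is the boundedness of $\|\vM-\vM_i\|_2$ (via bounded rank plus bounded operator norm), rather than term-by-term estimates, that produces the $\Op(1/n)$ error.
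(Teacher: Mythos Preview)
Your proof is correct and follows the same two–step decomposition as the paper: first concentrate the quadratic form $\tfrac1n\vbx_i^{\t}\vM_i\vbx_i$ around $\tfrac1n\tr{\vM_i}$, then compare $\tfrac1n\tr{\vM_i}$ with $\tfrac1n\tr{\vM}$. The first step is identical to the paper's (same rewriting $\vM_i=\vbS_n^{\backslash i}\bigl(\tfrac1n(\vbXP^{\backslash i})^{\t}\vbXP^{\backslash i}\bigr)\vbS_n^{\backslash i}$, same uniform norm bound $\sup_i\|\vM_i\|_2=\Op(1)$ via \Cref{lem:eigenprevious}, same $\chi^2$ tail bound).

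The second step is where you take a genuinely different, and cleaner, route. The paper expands $\vM-\vM_i$ via Sherman--Morrison and then bounds the resulting trace term by term, tracking the scalars $\rho=\tfrac{\mu_n}{n}\vbx_i^{\t}\vbS_n^{\backslash i}\vbx_i$, $\tau=\tfrac{\mu_n^2}{n}\vbx_i^{\t}(\vbS_n^{\backslash i})^2\vbx_i$, $\tfrac1n\|\vbx_i\|_2^2$, and $\|\vM_i\|_2$ through a somewhat laborious computation to arrive at $\Op(\ln^2 N/N)$. You instead observe that the Sherman--Morrison expansion writes $\vM-\vM_i$ as a sum of at most seven terms each carrying a rank-one factor, so $\operatorname{rank}(\vM-\vM_i)\le 7$; since $\vM-\vM_i$ is symmetric with $\|\vM-\vM_i\|_2\le\|\vM\|_2+\|\vM_i\|_2=\Op(1)$, you get $|\tr{\vM-\vM_i}|=\Op(1)$ and hence $\tfrac1n|\tr{\vM-\vM_i}|=\Op(1/n)$ directly. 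This bypasses all the scalar bookkeeping, gives a slightly sharper bound, and makes transparent \emph{why} the individually divergent Sherman--Morrison corrections collapse: the difference matrix is low-rank with bounded spectrum, regardless of how large its individual entries may be.
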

\begin{proof}
It is clear that we just need to prove the following two arguments
\BE
	\sup_i\left|\frac{1}{n}\vtx_i^{\t}\left(\vtS_n^{\backslash i}-\mu_n\left(\vtS_n^{\backslash i}\right)^2\right)\vtx_i
	-
	\frac{1}{n}\trace{\vtSigmaP\vtS_n^{\backslash i}-\mu_n\vtSigmaP\left(\vtS_n^{\backslash i}\right)^2}\right|	
	&=&
	\Op\left(\frac{\ln N}{\sqrt{N}}\right)\n\\ \label{eq:twoarguments1} \\
	\sup_i\left|\frac{1}{n}\trace{\vtSigmaP\vtS_n-\mu_n\vtSigmaP\left(\vtS_n\right)^2}
	-
	\frac{1}{n}\trace{\vtSigmaP\vtS_n^{\backslash i}-\mu_n\vtSigmaP\left(\vtS_n^{\backslash i}\right)^2}\right|
	&=&
	\Op\left(\frac{\ln N}{\sqrt{N}}\right).\n\\	\label{eq:twoarguments2}
\EE
To show \eqref{eq:twoarguments1}, we use a proof similar to that of \eqref{eq:eigenpart2}.
By \Cref{lem:eigenprevious}, we know
\BE
	\max_i\left\|\vbS_n^{\backslash i}\left(\frac{1}{n}\vbXP^{\t}\vbXP-\frac{1}{n}\vbx_i\vbx_i^{\t}\right)\vbS_n^{\backslash i}\right\|_2 &=& \Op(1).	\label{eq:eigenpart2_eq2}
\EE
Note that 
\BE
	\vtx_i^{\t}\left(\vtS_n^{\backslash i}-\mu_n\left(\vtS_n^{\backslash i}\right)^2\right)\vtx_i
	&=&
	\vbx_i^{\t}\vtSigmaP^{1/2}\left(\vtS_n^{\backslash i}-\mu_n\left(\vtS_n^{\backslash i}\right)^2\right)\vtSigmaP^{1/2}\vbx_i
		\n\\
	&=&
	\vbx_i^{\t}\left(\vbS_n^{\backslash i}\left(\frac{1}{n}\vbXP^{\t}\vbXP-\frac{1}{n}\vbx_i\vbx_i^{\t}\right)\vbS_n^{\backslash i}\right)\vbx_i
	.\n
\EE
Furthermore, $\vtx_i$ is a standard Gaussian vector, and it is independent of the matrix
\[ \vbS_n^{\backslash i}\left(\frac{1}{n}\vbXP^{\t}\vbXP-\frac{1}{n}\vbx_i\vbx_i^{\t}\right)\vbS_n^{\backslash i} . \]
Hence, we apply the same proof of \eqref{eq:similarpart1} with \eqref{eq:eigenpart2_eq2} and Lemma 1 of \cite{laurent2000adaptive}; this gives
\begin{multline*}
	\sup_i\left|\frac{1}{n}\vbx_i^{\t}\vtSigmaP^{1/2}\left(\vtS_n^{\backslash i}-\mu_n\left(\vtS_n^{\backslash i}\right)^2\right)\vtSigmaP^{1/2}\vbx_i-
	\frac{1}{n}\trace{\vtSigmaP^{1/2}\left(\vtS_n^{\backslash i}-\mu_n\left(\vtS_n^{\backslash i}\right)^2\right)\vtSigmaP^{1/2}}\right| \\
  = \Op\left(\frac{\ln N}{\sqrt{N}}\right)
	.
\end{multline*}
Hence, \eqref{eq:twoarguments1} holds.
Therefore, it remains to show \eqref{eq:twoarguments2}, which is equivalent to
\BE
	\sup_i\left|\frac{1}{n}\trace{\vbS_n\left(\frac{1}{n}\vbXP^{\t}\vbXP\right)\vbS_n}
	-
	\frac{1}{n}\trace{\vbS_n^{\backslash i}\left(\frac{1}{n}\vbXP^{\t}\vbXP-\frac{1}{n}\vbx_i\vbx_i^{\t}\right)\vbS_n^{\backslash i}}\right|&=&\Op\left(\frac{\ln N}{\sqrt{N}}\right).	\n\\
	\label{eq:finalgoal}
\EE
By the Sherman-Morrison formula, we have 
\[
	\vbS_n\= \vbS_n^{\backslash i}-\frac{\vbS_n^{\backslash i}\vbx_i\vbx_i^{\t}\vbS_n^{\backslash i}}{n+\vbx_i^{\t}\vbS_n^{\backslash i}\vbx_i},
\]
and therefore
\BE
	\lefteqn{\trace{\vbS_n\left(\frac{1}{n}\vbXP^{\t}\vbXP\right)\vbS_n-\vbS_n^{\backslash i}\left(\frac{1}{n}\vbXP^{\t}\vbXP-\frac{1}{n}\vbx_i\vbx_i^{\t}\right)\vbS_n^{\backslash i}}}
		\n\\
	&=&
	 \trace{\vbS_n^{\backslash i}\left(\frac{1}{n}\vbx_i\vbx_i^{\t}\right)\vbS_n^{\backslash i}}-2\cdot\trace{\vbS_n^{\backslash i}\left(\frac{1}{n}\vbXP^{\t}\vbXP\right)\frac{\vbS_n^{\backslash i}\vbx_i\vbx_i^{\t}\vbS_n^{\backslash i}}{n+\vbx_i^{\t}\vbS_n^{\backslash i}\vbx_i}}
	 	\n\\
	&&+\trace{\frac{\vbS_n^{\backslash i}\vbx_i\vbx_i^{\t}\vbS_n^{\backslash i}}{n+\vbx_i^{\t}\vbS_n^{\backslash i}\vbx_i}\left(\frac{1}{n}\vbXP^{\t}\vbXP\right)\frac{\vbS_n^{\backslash i}\vbx_i\vbx_i^{\t}\vbS_n^{\backslash i}}{n+\vbx_i^{\t}\vbS_n^{\backslash i}\vbx_i}}
	.	\label{eq:p>npart2final_eq1}
\EE
Let $\vM_i=\vbS_n^{\backslash i}\left(\frac{1}{n}\vbXP^{\t}\vbXP-\frac{1}{n}\vbx_i\vbx_i^{\t}\right)\vbS_n^{\backslash i}$. Let $\rho = \frac{\mu_n}{n}\vbx_i^{\t}\vbS_n^{\backslash i}\vbx_i$ and $\tau = \frac{\mu_n^2}{n}\vbx_i^{\t}\left(\vbS_n^{\backslash i}\right)^2\vbx_i$. Then, from \eqref{eq:p>npart2final_eq1}, we have
\BE
	\lefteqn{\sup_i\left|\frac{1}{n}\trace{\vbS_n\left(\frac{1}{n}\vbXP^{\t}\vbXP\right)\vbS_n-\vbS_n^{\backslash i}\left(\frac{1}{n}\vbXP^{\t}\vbXP-\frac{1}{n}\vbx_i\vbx_i^{\t}\right)\vbS_n^{\backslash i}}\right|}
		\n\\
	&=&
	\sup_i\left|\frac{\tau}{\mu_n^2n}-\frac{2}{n}\cdot\left(\trace{\vM_i\frac{\frac{\mu_n}{n}\vbx_i\vbx_i^{\t}\vbS_n^{\backslash i}}{\mu_n+\rho}}+\frac{\rho}{\mu_n+\rho}\frac{\tau}{\mu_n^2}\right)+\frac{1}{n}\frac{\tau}{(\mu_n+\rho)^2}\cdot\frac{1}{n}\vbx_i^{\t}\vM_i\vbx_i+\frac{1}{\mu_n^2n}\frac{\rho^2\tau}{(\mu_n+\rho)^2}\right|
		\n\\
	&=&
	\sup_i\left|\frac{\left((\mu_n+\rho)^2-2\rho(\mu_n+\rho)+\rho^2\right)\tau}{\mu_n^2n(\mu_n+\rho)^2}-\frac{2}{n}\cdot\trace{\vM_i\frac{\frac{\mu_n}{n}\vbx_i\vbx_i^{\t}\vbS_n^{\backslash i}}{\mu_n+\rho}}+\frac{1}{n}\frac{\tau}{(\mu_n+\rho)^2}\cdot\frac{1}{n}\vbx_i^{\t}\vM_i\vbx_i\right|
		\n\\
	&\leq&
	\sup_i\left(\frac{\tau}{n(\mu_n+\rho)^2}+\frac{2}{n(\mu_n+\rho)}\|\vM_i\|_2\cdot\frac{\mu_n}{n}\|\vbx_i\vbx_i^{\t}\vbS_n^{\backslash i}\|_2+\frac{\tau}{n(\mu_n+\rho)^2}\cdot \frac{1}{n}\|\vbx_i\|_2^2\|\vM_i\|_2\right)
		\n\\
	&\leq&
		\sup_i\left(\frac{\tau}{n(\mu_n+\rho)^2}+\frac{2}{n(\mu_n+\rho)}\|\vM_i\|_2\cdot\sqrt{\frac{1}{n}\|\vbx_i\|_2^2\cdot\tau}+\frac{\tau}{n(\mu_n+\rho)^2}\cdot \frac{1}{n}\|\vbx_i\|_2^2\|\vM_i\|_2\right)
	.	\label{eq:p>npart2final_eq2}
\EE 
Our next step is to bound $\rho,\tau,\sup_i\|\vbx_i\|_2$ and $\sup_i\|\vM_i\|_2$.
Since the $\vbx_i$ are standard Gaussian vectors, standard $\chi^2$ tail bounds \citep{laurent2000adaptive} establish that $\sup_i\frac{1}{n}\|\vx_i\|_2^2=\Op(\ln N)$.
Then, by \eqref{eq:maximum}, we know 
\[
	\tau \=  \frac{\mu_n^2}{n}\vbx_i^{\t}\left(\vbS_n^{\backslash i}\right)^2\vbx_i \leq \mu_n^2\cdot \Op(\ln N)\cdot \Op\left(\frac{1}{\mu_n^2}\right)\=\Op\left(\ln N\right).
\]
Using \Cref{claim:p>ngoal4}, \Cref{claim:p>ngoal5}, and \eqref{eq:36}, we also have $\rho = \Thp(1)$.
Finally, by \eqref{eq:eigenpart2}, we have $\sup_i\|\vM_i\|=\Op(1)$. Plug in these results in \eqref{eq:p>npart2final_eq2}, we have
\[
	\sup_i\left|\frac{1}{n}\trace{\vbS_n\left(\frac{1}{n}\vbXP^{\t}\vbXP\right)\vbS_n-\vbS_n^{\backslash i}\left(\frac{1}{n}\vbXP^{\t}\vbXP-\frac{1}{n}\vbx_i\vbx_i^{\t}\right)\vbS_n^{\backslash i}}\right|
	\=
	\Op\left(\frac{\ln^2 N}{N}\right)
	.
\]
Hence \eqref{eq:finalgoal} holds.
\end{proof}

\subsection{Proof of \Cref{lem:St}}\label{app:St}

The first part of the lemma, Equation~\eqref{eq:lem_m}, follows from Theorem 2.38 of \cite{tulino2004random}.

For the second part, to lower bound the minimum eigenvalue $\lambda_{\min}$ of $\frac{1}{n}\vbX\vH\vbX^{\t}$, we need to find the support of $\mathcal{F}$. From Section 4 of \cite{silverstein1995analysis}, we have 
\BE
	z\in ~\operatorname{supp}(\mathcal{F})^c &\Leftrightarrow& m(z)\in B \ \ \text{and}\ \ \frac{1}{m(z)^2}-\gamma \int_{\eta_1}^{\infty} \frac{t^2f_h(t)\dif t}{(1+tm(z))^2}>0,	\n
\EE
where $B:=\{m: m\neq 0, -m^{-1}\in \operatorname{supp}(\mathcal{H})^c \}$.

To show $\lambda_{\min} > c_{\epsilon}>0$ holds in probability for some small enough constant $c_{\epsilon}$, we just need to show that for all $0\leq z\leq c_{\epsilon}$, 
\BE
	m(z)\ >\ 0 \quad \text{and} \quad \frac{1}{m(z)^2}-\gamma \int_{\eta_1}^{\infty} \frac{t^2}{(1+t\cdot m(z))^2}\cdot f_h(t)\dif t >0
	.	\label{eq:support}	
\EE   
Note that the equation \eqref{eq:lem_m} defining $m(z)$, i.e.,
\[
	m(z) \= -\left(z-\gamma \int_{\eta_1}^{\infty} \frac{t f_h(t)\dif t}{1+t\cdot m(z)}\right)^{-1} , \quad \forall z\in \operatorname{supp}(\mathcal{F})^c
\]
is equivalent to
\[
	z\=\gamma \int_{\eta_1}^{\infty}\frac{t}{1+t\cdot m(z)}\cdot f_h(t)\dif t-\frac{1}{m(z)}, \quad \forall z\in ~\operatorname{supp}(\mathcal{F})^c
\]
Let us consider the ``inverse'' of $m(z)$ defined by the following equation:
\[
	z(m) \ := \ \gamma \int_{\eta_1}^{\infty}\frac{t}{1+t\cdot m}\cdot f_h(t)\dif t-\frac{1}{m}.
\]
Note that 
\BE
	\inf_{m<0}z(m)
	&\geq& \gamma \ >\ 1.\n
\EE
Hence, for all $z\leq 1$, if $m(z)$ exists, we have $m(z)>0$. Further, note that
\BE
	\frac{\dif z(m)}{\dif m}>0&\Leftrightarrow&\frac{1}{m(z)^2}-\gamma \int_{\eta_1}^{\infty} \frac{t^2}{(1+t\cdot m(z))^2}\cdot f_h(t)\dif t >0
		\n\\
	&\Leftrightarrow&
	\gamma \int_{\eta_1}^{\infty} \frac{t^2}{(m^{-1}+t)^2}\cdot f_h(t)\dif t<1
	.	\n
\EE
Moreover, $\gamma\int_{\eta_1}^{\infty} \frac{t^2}{(m^{-1}+t)^2}\cdot f_h(t)\dif t$ is a continuous increasing function of $m$ with
\BE
	\gamma\int_{\eta_1}^{\infty} \frac{t^2}{(m^{-1}+t)^2}\cdot f_h(t)\dif t&\rightarrow& 0 \quad \text{as} \ m\rightarrow 0
	\n\\
	\gamma\int_{\eta_1}^{\infty} \frac{t^2}{(m^{-1}+t)^2}\cdot f_h(t)\dif t&\rightarrow& \gamma>1 \quad \text{as} \ m\rightarrow \infty
	.	\n
\EE
Therefore, we know there exists a constant $m_c$ such that for all $0<m<m_c$, $z(m)$ is a strictly increasing function on $m\in (0,m_c)$ and strictly decreasing function on $m\in [m_c,\infty)$. Thus, the conditions in \eqref{eq:support} (with $m$ in place of $m(z)$) are met for all $0<m<m_c$. Note that 
\BE
	m \cdot z(m)&=&\left(\gamma \int_{\eta_1}^{\infty} \frac{t}{1/m+t}\cdot f_h(t)\dif t-1\right)
	\ \rightarrow\ 
	\left\{\begin{aligned}
	&-1,  &&\text{as}\ m\rightarrow 0^{+}\\
	&\gamma-1>0, && \text{as}\ m\rightarrow +\infty
	\end{aligned}\right.
	,	\n
\EE
Therefore, we have $z(m)\rightarrow -\infty$ as $m\rightarrow 0^{+}$ and $z(m)\rightarrow 0^{+}$ as $m\rightarrow \infty$. Then, by continuity of the function $z(m)$, we know for any non-positive value $z$, the mapping between $z$ and $m>0$ defined by \eqref{eq:lem_m} is an one to one mapping. Moreover, since the function $z(m)$ is increasing on $\intoo{0,m_c}$ and decreasing on $\intco{m_c,\infty}$, there exists an unique $m^*$ such that $z(m^*)=0$ and $z(m)$ is a continuous and increasing function on $[0,m^*]$. Hence, we have $m^*<m_c$. This implies $m(z)$ is a continuous increasing function on $z\leq 0$. Further, we can find a small enough constant $\epsilon>0$ such that $m^*+\epsilon<m_c$ and $0<z(m^*)<1$ ($z$ is a function here). With $c_{\epsilon} := z(m^*+\epsilon)$, we have that for all $0\leq z\leq c_{\epsilon}$, the conditions in \eqref{eq:support} are met. Hence $\lambda_{\min} >c_{\epsilon}>0$ holds in probability. 

Finally, by the dominated convergence theorem, we have
\BE
	\lim_{n\rightarrow \infty}m_n(z)= m(z), \ \text{a.s.} \quad \text{and} \quad \lim_{n\rightarrow \infty}m_n'(z)=m'(z), \ \text{a.s. \ for}\quad \forall z<0.\n
\EE 
For an increasing sequence $z_n\rightarrow 0^{-}$, note that for all $\epsilon'>0$, we have $|m_n(z_n)-m_n(-\epsilon')|\leq \frac{\epsilon'-z_n}{c_{\epsilon}^2}$ holds in probability. Further, $m_n(-\epsilon')\rightarrow m(-\epsilon')$ almost surely and $m(-\epsilon')\rightarrow m(0)$ as $\epsilon'\rightarrow 0$. Hence, for all $\epsilon'>0$, we can choose a small enough $\epsilon''>0$ such that
\BE
	\P(|m_n(z_n)-m_n(-\epsilon'')| \leq  \frac{\epsilon'}{3})
	&\rightarrow& 1
	\n\\
	\P(|m_n(-\epsilon'')-m(-\epsilon'')|\leq \frac{\epsilon'}{3})&\rightarrow& 1
	\n\\
	|m(-\epsilon'')-m(0)|&\leq& \frac{\epsilon'}{3}
	.	\n
\EE
Hence, we have $m_n(z_n)\ipt m(0)$. Similarly, we have $m_n'(z_n)\ipt m'(0)$.

\subsection{Proof of \Cref{lem:dH}}\label{app:dH}
Let $\sigma_n$ be the random variable that follows the empirical eigenvalue distribution of $N^{\kappa}\vSigma_S$. Since the minimum eigenvalue of $N^{\kappa}\vSigma_S$ is $\frac{N^{\kappa}}{p_2^{\kappa}}$ and its maximum eigenvalue is $\frac{N^{\kappa}}{(p_1+1)^{\kappa}}$. Then for all $t\in [\frac{N^{\kappa}}{p_2^{\kappa}},\frac{N^{\kappa}}{(p_1+1)^{\kappa}}]$, we have
\BE
	\P(\sigma_n>t)
	&=&
	\frac{1}{|S|}\sum_{i=1+p_1}^{p_2}\ind{\frac{N^{\kappa}}{i^{\kappa}}>t}
		\n\\
	&=&
	\frac{1}{|S|}\max\left(0,\left\lfloor\frac{N}{t^{1/\kappa}}\right\rfloor-p_1\right)
		\n\\
	&=&
	\frac{1}{|S|}\left(\left\lfloor\frac{N}{t^{1/\kappa}}\right\rfloor-p_1\right)
	,	\n
\EE 
where the last inequality is due to the fact that 
\[
	\left\lfloor\frac{N}{t^{1/\kappa}}\right\rfloor
	\ \geq \
	\left\lfloor\frac{N(p_1+1)}{N}\right\rfloor
	\=
	\left\lfloor p_1+1\right\rfloor
	\ \geq \
	p_1
	.
\]
Hence, as $N\rightarrow \infty$, we have
\[
	\P(\sigma_n>t)\rightarrow \left\{\begin{aligned}
	&1,	&&t\leq \frac{1}{\alpha_2^{\kappa}}\\
	&\max\left(0,\frac{1}{\alpha_2-\alpha_1}(\frac{1}{t^{1/\kappa}}-\alpha_1)\right), && t>\frac{1}{\alpha_2^{\kappa}}
	\end{aligned}\right.
	.
\]
Hence, the probability density function for the limiting distribution of $\sigma_n$ is indeed $f(s)$ given by \eqref{eq:pdf}.

\subsection{Proof of \Cref{lem:eigen}}\label{app:eigen}
Without loss of generality, we assume that the diagonal elements of $\vSigma$ are in a non-increasing order. We condition on the event where the $\frac{n}{2}$ smallest diagonal elements of $\vSigma$ are lower-bounded by $\nu$.
The minimum eigenvalue of
\[ \vS=\left(\frac{1}{n}\vbX^{\t}\vbX+\mu \vSigma\right) , \]
is given by
\BE
	\sigma_{\min}(S)&=&\min_{\|\vv\|=1}\vv^{\t}\left(\frac{1}{n}\vbX^{\t}\vbX+\mu \vSigma\right)\vv.	\n
\EE
Let $\vv=(\vv_1,\vv_2)$ where $\vv_1$ is the first $p-\frac{n}{2}$ number of components of $\vv$ and $\vv_2$ is the last $\frac{n}{2}$ number of components of $\vv$. If $\|\vv_1\|^2\geq \frac{1}{400\gamma^2}$, then immediately, we have
\BE
	\sigma_{\min}(S)&\geq&\mu \|\vv_1\|^2\nu\ \geq \ \mu \frac{\nu}{400\gamma^2}.\n
\EE  
Otherwise, let $\vbX=(\vbX_1,\vbX_2)$ where $\vbX_1$ is the first $p-\frac{n}{2}$ columns of $\vbX$ and $\vbX_2$ is the last $\frac{n}{2}$ columns of $\vbX$. Then we have
\BE
	\sigma_{\min}(S)&\geq &\min_{\|\vv\|=1,\|\vv_1\|^2<\frac{1}{400\gamma^2}} \ \frac{1}{n}\|\vbX_2\vv_2\|^2+\frac{1}{n}\|\vbX_1\vv_1\|^2-2\frac{1}{n}\|\vbX_1\vv_1\|\cdot\|\vbX_2\vv_2\|
		\n\\
	&=&
	\min_{\|\vv\|=1,\|\vv_1\|^2<\frac{1}{400\gamma^2}} \ \left(\frac{1}{\sqrt{n}}\|\vbX_2\vv_2\|-\frac{1}{\sqrt{n}}\|\vbX_1\vv_1\|\right)^2.	\n
\EE
Note that $\vX_2$ is a $n\times \frac{n}{2}$ standard Gaussian matrix and therefore the minimum eigenvalue of $\frac{1}{n}\vX_2^{\t}\vX_2$ can be lower bounded away from 0. Further $\vX_1$ is a $n\times (p-\frac{n}{2})$ standard Gaussian matrix with $\frac{p-\frac{n}{2}}{n}\rightarrow \gamma-\frac{1}{2}$ as $p,n\rightarrow \infty$. Hence the maximum eigenvalue of $\frac{1}{n}\vX_1^{\t}\vX_1$ can be upper bounded. In fact, from Lemma \ref{lem:eigenprevious} (Lemma 10 of \cite{xu2019consistent}), we have with probability $1-c n^2\mathrm{exp}(-c'n)$, we have
\[
	\min_{\|\vv\|=1}\frac{1}{n}\|\vbX_2\vv\|^2\geq \frac{1}{25} \quad \text{and} \quad \max_{\|\vv\|=1}\frac{1}{n}\|\vbX_1\vv\|^2\leq 9\gamma^2.
\]
Hence, we have
\BE
	\sqrt{\sigma_{\min}(S)}&\geq&\min_{\|\vv\|=1,\|\vv_1\|^2<\frac{1}{400\gamma^2}} \frac{1}{\sqrt{n}}\|\vbX_2\vv_2\|-\frac{1}{\sqrt{n}}\|\vbX_1\vv_1\|
		\n\\
	& \geq &
	\frac{1}{5}\sqrt{1-\frac{1}{400\gamma^2}}-3\gamma \cdot \frac{1}{20\gamma}
		\n\\
	&\geq&
	\frac{\sqrt{399}}{100}-\frac{3}{20}
	\ >\ 0. \n
\EE
This completes the proof of this lemma.

\section{Analysis under polynomial eigenvalue decay with noise $\sigma > 0$}\label{app:noise}

In this section, we consider analogues of \Cref{thm:p<n}--\Cref{thm:compare} that permit noisy independent observations
\[ y_i = \vx_i^\t\vtheta + w_i , \quad i=1,\dotsc,n , \]
where $\vw = (w_1,\dotsc,w_n) \sim \Normal(\v0,\sigma^2\vI)$, where we allow $\sigma^2>0$.
\begin{theorem}\label{thm:noise}
  Assume \A1 with constant $\kappa$ and \A2 with constants $\ratiop$ and $\ration$.
\begin{itemize}
\item[(i)]
We have for all $\ratiop <\ration $,
\BE
\bbE_{\vw,\vtheta}[\error] \ \ipt \ \left( N^{1-\kappa}\int_{\ratiop}^1t^{-\kappa}\dif t+\sigma^2\right)\cdot \frac{\ration }{\ration -\ratiop }=:\mathcal{R}_{\kappa}(\ratiop,\sigma), \quad \forall \ratiop<\ration. \label{eq:iptnoise}
\EE
When $\kappa>1$, the minimum of $\mathcal{R}_{\kappa}(\ratiop, \sigma)$ is achieved at $\ratiop=0$ and the minimum risk is given by
\BE
	\min_{\ratiop <\ration }\ \mathcal{R}_{\kappa}(\ratiop, \sigma)
	&=&
	\sigma^2
	.\label{eq:minp<nnoise_eq1}
\EE 
When $\kappa\leq1$, we have nearly the same results as in \Cref{thm:p<n}, i.e., the minimum of $\mathcal{R}_{\kappa}(\ratiop, \sigma)$ is achieved at $\ratiop^*$ which is the unique solution of the equation $h_{\kappa}(\ratiop )=0$ on $(0,\ration)$, where $h_{\kappa}(\ratiop)$ is given by
\BE
	h_{\kappa}(\ratiop )
	&:=&
	\frac{\ration }{\ratiop }-\int_{\ratiop }^1t^{\kappa-2}\dif t-1-\sigma^2\ind{\kappa=1}
	.\label{eq:hnoise_eq1}
\EE	
The minimum risk is therefore given by
\BE
	\min_{\ratiop <\ration }\ \mathcal{R}_{\kappa}(\ratiop, \sigma)
	&=&
	N^{1-\kappa}\frac{\ration }{(\ratiop^*)^{\kappa}}
	.\label{eq:minp<nnoise_eq2}
\EE
\item[(ii)]
For all $\ratiop >\ration$,
the function $m_{\kappa}$ defined in \Cref{eq:mz} and its derivative $m_{\kappa}'$ are well-defined and positive at $z=0$, and
\BE
\bbE_{\vw,\vtheta}[\error] \ \Ipt \ 
	N^{1-\kappa}\frac{\ration }{m_{\kappa}(0)}+\left(N^{1-\kappa}\int_{\ratiop}^{1}t^{-\kappa}\dif t+\sigma^2\right)\frac{m_{\kappa}'(0)}{m^2_{\kappa}(0)}=:\mathcal{R}_{\kappa}(\ratiop,\sigma) . \label{eq:iptp>nnoise}
\EE

\item[(iii)] 
When $\kappa>1$, the minimum risk for all $\ratiop<1$ and $\ratiop \neq \ration$ is achieved at $\ratiop=0$, i.e., $p=o(n)$. When $\kappa<1$, let $\ratiop^*$ be the minimizer of $\mathcal{R}_{\kappa}(\ratiop, \sigma)$ over the interval $\intco{0,\ration}$. Then $\limsup_N\mathcal{R}_{\kappa}(1, \sigma) / \mathcal{R}_{\kappa}(\ratiop^*, \sigma) < 1$.

\end{itemize}
\end{theorem}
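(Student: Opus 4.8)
The plan is to re-run the noiseless arguments of \Cref{thm:p<n}--\Cref{thm:compare} while tracking the extra noise contribution, and then to supply the one new inequality that part~(iii) requires.

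\emph{Risk formulas (parts (i) and (ii)).} Since $\vy=\vX\vtheta+\vw$ with $\vw\sim\Normal(\v0,\sigma^2\vI)$ independent of $\vtheta$, a fresh sample gives $\error=\sigma^2+\norm{\vSigma^{1/2}(\vtheta-\hat\vtheta)}^2$, and all cross terms in $\bbE_{\vw,\vtheta}[\error]$ vanish by independence and mean-zeroness. For $p<n$ this yields $\bbE_{\vw,\vtheta}[\error]=(\tr{\vSigmapC}+\sigma^2)\parens{1+\tr{(\vbXP^\t\vbXP)^{-1}}}$, and for $p>n$ it yields $\text{part 1}+(\tr{\vSigmapC}+\sigma^2)(\psi+1)$ with $\psi:=\tr{\vSigmaP\vXP^\t(\vXP\vXP^\t)^{-2}\vXP}$ exactly as in \Cref{app:part2}; the noise piece contributes $\sigma^2\tr{(\vbXP^\t\vbXP)^{-1}}$ (resp.\ $\sigma^2\psi$), which combines with the standalone $\sigma^2$ into the stated multiplicative factor. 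The Markov-inequality concentration over $\vXpC$, the Wishart asymptotics for $\tr{(\vbXP^\t\vbXP)^{-1}}$, the estimate \eqref{eq:tracep<n2} for $\tr{\vSigmapC}$, the relation $\text{part 1}\ipt N^{1-\kappa}\ration/m_\kappa(0)$ from \Cref{app:part1}, and the relation $\psi+1\ipt m_\kappa'(0)/m_\kappa(0)^2$ from \eqref{eq:p>npart2goal} all apply verbatim and deliver \eqref{eq:iptnoise} and \eqref{eq:iptp>nnoise}. Well-definedness and positivity of $m_\kappa(0)$ and $m_\kappa'(0)$ is the $\sigma$-free statement already proved in \Cref{app:monoto}.

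\emph{Minimization in part (i).} Writing $g_\kappa(\ratiop):=N^{\kappa-1}\mathcal{R}_\kappa(\ratiop,\sigma)$, adding the constant $N^{\kappa-1}\sigma^2$ inside the bracket of \eqref{eq:derivp<n} gives $g_\kappa'(\ratiop)=\frac{\ration}{(\ration-\ratiop)^2}\parens{-\ratiop^{1-\kappa}h_\kappa(\ratiop)+N^{\kappa-1}\sigma^2}$ with $h_\kappa$ the function of \eqref{eq:h}. When $\kappa>1$, $N^{\kappa-1}\sigma^2\to\infty$ dominates; since $\ratiop\mapsto -\ratiop^{1-\kappa}h_\kappa(\ratiop)+N^{\kappa-1}\sigma^2$ is strictly increasing on $(0,\ration)$ (derivative $\kappa(\ration-\ratiop)/\ratiop^{1+\kappa}>0$, by \eqref{eq:hderiv}) and runs from $-\infty$ to a positive value, it has a unique zero $\ratiop^*$; the first-order relation $\int_{\ratiop^*}^1 t^{-\kappa}\dif t+N^{\kappa-1}\sigma^2=(\ration-\ratiop^*)(\ratiop^*)^{-\kappa}$ gives minimum value $N^{1-\kappa}\ration(\ratiop^*)^{-\kappa}$, and since the same relation forces $\ratiop^*\to0$ (hence $\tr{\vSigmapC}\to0$ is negligible) and the value $\to\sigma^2$, we obtain \eqref{eq:minp<nnoise_eq1}. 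When $\kappa\le1$, $N^{\kappa-1}\sigma^2\to\sigma^2\ind{\kappa=1}$, so the limiting first-order condition is $h_\kappa(\ratiop)=0$ for $h_\kappa$ as in \eqref{eq:hnoise_eq1}; re-running the monotonicity argument of \Cref{sec:proofp<n} for $\tilde h_\kappa(\ratiop):=\ratiop^{1-\kappa}h_\kappa(\ratiop)$ (still strictly decreasing, and for $\kappa=1$ satisfying $\tilde h_\kappa(\ration)=\ln\ration-\sigma^2<0$) gives a unique root, and the first-order relation again yields \eqref{eq:minp<nnoise_eq2}.

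\emph{Part (iii), and the main obstacle.} For $\kappa>1$, part~(i) gives $\inf_{\ratiop<\ration}$ of the risk equal to $\sigma^2$; in the regime $\ratiop>\ration$ the factor $N^{1-\kappa}\to0$ kills the bias terms of \eqref{eq:iptp>nnoise}, leaving $\bbE_{\vw,\vtheta}[\error]\Ipt\sigma^2\,m_\kappa'(0)/m_\kappa(0)^2$, so it suffices to prove $m_\kappa'(0)/m_\kappa(0)^2>1$ --- this is the one genuinely new ingredient. Differentiating the defining equation \eqref{eq:mz} at $z=0$ (as in \Cref{app:St}) gives $m_\kappa'(0)/m_\kappa(0)^2=\parens{1-\gamma\,m_\kappa(0)^2\int t^2 f_\kappa(t)(1+t\,m_\kappa(0))^{-2}\dif t}^{-1}$ with $\gamma=\ratiop/\ration$ and $f_\kappa$ the density of \Cref{lem:dH}; the integral term lies strictly in $(0,1)$ because, in the notation of the proof of \Cref{lem:St}, $m_\kappa(0)=m^*<m_c$ and $z$ is strictly increasing at $m^*$ --- which is precisely $\gamma m^2\int t^2 f_h(t)(1+tm)^{-2}\dif t<1$ at $m=m^*$. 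Hence $m_\kappa'(0)/m_\kappa(0)^2>1$, and the overall minimum is attained at $\ratiop=0$. For $\kappa<1$, $N^{1-\kappa}\to\infty$, so $\mathcal{R}_\kappa(1,\sigma)$ and $\mathcal{R}_\kappa(\ratiop^*,\sigma)$ are both governed by their $N^{1-\kappa}$ terms, and (using the first-order relation at $\ratiop^*$) their ratio tends to $(\ratiop^*)^\kappa/m_\kappa(0)=(\ratiop^*/s^*_\kappa)^\kappa$; since the noisy minimizer $\ratiop^*$ converges to the noiseless one, the inequality $s^*_\kappa>\ratiop^*$ from the proof of \Cref{thm:compare} gives $\limsup_N\mathcal{R}_\kappa(1,\sigma)/\mathcal{R}_\kappa(\ratiop^*,\sigma)<1$. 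Apart from the amplification inequality above, everything is bookkeeping on top of the noiseless proofs; the only other point needing care is that, for $\kappa>1$ in (i) and (iii), ``the minimum is achieved at $\ratiop=0$'' is a limiting statement --- the finite-$N$ minimizer drifts to $0$ while its value converges to $\sigma^2$ --- whose justification compares $\int_{\ratiop^*}^1 t^{-\kappa}\dif t$ with $N^{\kappa-1}\sigma^2$.
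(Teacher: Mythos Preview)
Your proposal is correct and, for parts (i) and (ii), follows the paper's route essentially verbatim: add the noise contribution to the error decomposition, then re-use the Wishart asymptotics, the computation of $\tr{\vSigmapC}$, and the limits for part~1 and $\psi+1$ already established in \Cref{app:p<n}--\Cref{app:part2}. Your treatment of the minimization in part~(i) is in fact more detailed than the paper's, which simply appeals to \eqref{eq:ipta>0}; your finite-$N$ analysis (tracking the drift of the minimizer to $0$ when $\kappa>1$) and the paper's limit-then-minimize argument both reach \eqref{eq:minp<nnoise_eq1}.

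The one place where you take a genuinely different route is part~(iii), $\kappa>1$: both you and the paper reduce to the inequality $m_\kappa'(0)/m_\kappa(0)^2>1$, but you obtain it by differentiating the Stieltjes equation to get
\[
\frac{m_\kappa'(0)}{m_\kappa(0)^2}=\Bigl(1-\gamma\, m_\kappa(0)^2\!\int \frac{t^2 f_\kappa(t)}{(1+t\,m_\kappa(0))^2}\dif t\Bigr)^{-1},
\]
and then invoke the monotonicity argument from the proof of \Cref{lem:St} (namely $m_\kappa(0)=m^*<m_c$, equivalently $z'(m^*)>0$) to place the bracketed term strictly in $(0,1)$. The paper instead substitutes the explicit formula \eqref{eq:m**} for $m_\kappa'(0)$ and compares numerator and denominator directly. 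Your argument is cleaner and density-agnostic---it would carry over unchanged to the general-$f$ setting of \Cref{sec:general}---whereas the paper's is more concrete but tied to the particular computation in \eqref{eq:m**}. For $\kappa<1$ in part~(iii), both approaches coincide: the $N^{1-\kappa}$ terms dominate and the comparison reduces to $s^*_\kappa>\ratiop^*$ from \Cref{sec:compare}.
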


The proof of (i) can be easily derived from \eqref{eq:ipta>0}. The proof of (ii) can be easily derived as well from \eqref{eq:bbEwthetaerrorp>n} and \eqref{eq:p>npart2goal}. For the proof of (iii), note that when $\kappa<1$, the dominant part of the risk is the same as the noiseless case, so (iii) follows from the arguments in \Cref{thm:compare}. When $\kappa>1$, the dominant part of the risk is the noise, and therefore from \eqref{eq:m**}, we have
\[
	\min_{\ratiop >\ration }\ \mathcal{R}_{\kappa}(\ratiop, \sigma)\ \geq \ \min_{\ratiop >\ration }\sigma^2\frac{\ration(1+(s^*_{\kappa})^{\kappa})}{\ration+(\ration-\ratiop)(s^*_{\kappa})^{\kappa}}\ > \ \sigma^2\ =\ \mathcal{R}_{\kappa}(0, \sigma).
\] 
Further (and still with $\kappa>1$), for $N$ large enough,
\[
    \min_{\ratiop<\ration} \ \mathcal{R}_{\kappa}(\ratiop, \sigma)\ \rightarrow \ \min_{\ratiop<\ration} \ \frac{\ration}{\ration-\ratiop} \sigma^2 \ \geq \ \sigma^2\ = \  \mathcal{R}_{\kappa}(0, \sigma) .
\] 
This proves (iii) in the case $\kappa>1$.

\section{Proof of \Cref{thm:general}}\label{app:general}

\subsection{Proof of Part (i)}
Since $p<n$ holds almost surely as $N\rightarrow \infty$, by excluding an additional zero probability event $p\geq n$, we can apply the same calculation in Section \ref{sec:proofp<n} and conclude that the following equation holds under our new settings, i.e.,
\BE
\bbE_{\vw,\vtheta}[\error] \ \ipt \ \left( \trace{\vSigmapC}+\sigma^2\right)\frac{\ration}{\ration-\ratiop(\nu)}. \n
\EE
Hence, to show \eqref{eq:generalp<nthm_1}, we just need to characterize $\trace{\vSigmapC}$. By Assumption $\B1$, we have
\BE
	\trace{\vSigmapC}\=\left(\sum_{i=1}^N\sigma_i^2\ind{c_N\sigma_i^2\leq \nu}\right)\rightarrow \frac{N}{c_N}\cdot\delta\int_{\eta_1}^{\nu}tf(t)\dif t.\n
\EE
Hence, we have
\BE
  \bbE_{\vw,\vtheta}[\error] & \ipt &
	\left(\frac{N}{c_N}\cdot\delta\int_{\eta_1}^{\nu}tf(t)\dif t+\sigma^2\right)\frac{\ration}{\ration-\delta\int_{\nu}^{\infty} f(t)\dif t }. \n
\EE
Hence, \eqref{eq:generalp<nthm_1} holds. Then our next step is to find the optimal $\nu^*$ in $(\nu_b,\infty)$ when $\sigma=0$. Define 
\[
	g_f(\nu)\ := \ \frac{\int_{\eta_1}^{\nu}tf(t)\dif t}{\ration-\delta\int_{\nu}^{\infty} f(t)\dif t}.
\]
To minimize $\bbE_{\vw,\vtheta}[\error]$, we just need to minimize $g_f(\nu)$ over $\nu \in (\nu_b,\infty)\bigcap \operatorname{supp}(f)$. To do this,  we analyze the first derivative of $g_f(\nu)$. Note that
\BE
	\frac{\dif g_f(\nu)}{\dif \nu}
	&=&
	\frac{\nu f(\nu)}{\ration-\delta\int_{\nu}^{\infty} f(t)\dif t }-\frac{\delta f(\nu)\int_{\eta_1}^{\nu}tf(t)\dif t}{\left(\ration-\delta\int_{\nu}^{\infty} f(t)\dif t \right)^2}
		\n\\
	&=&
	\frac{f(\nu)}{\left(\ration-\delta\int_{\nu}^{\infty} f(t)\dif t \right)^2}\left(\nu\ration-\nu\delta\int_{\nu}^{\infty}f(t)\dif t-\delta\int_{\eta_1}^{\nu}tf(t)\dif t\right)
		\n\\
	&=&
	\frac{f(\nu)}{\left(\ration-\delta\int_{\nu}^{\infty} f(t)\dif t \right)^2}h_f(\nu)
	,	\quad \forall \nu \in (\nu_b,\infty)\bigcap \operatorname{supp}(f). \n
\EE
Therefore, the sign of $\frac{\dif g_f(\nu)}{\dif \nu}$ is the same as the sign of $h_f(\nu)$ on $\nu \in (\nu_b,\infty)\bigcap \operatorname{supp}(f)$. Further, note that 
\BE
	\frac{\dif h_f(\nu)}{\dif \nu}&=& \ration-\delta\int_{\nu}^{\infty}f(t)\dif t\ >\ 0, \quad \forall \nu \in (\nu_b,\infty)\bigcap \operatorname{supp}(f). \n
\EE
Hence $h_f(\nu)$ is a strictly increasing function of $\nu$ in $(\nu_b,\infty)\bigcap \operatorname{supp}(f)$. Further, note that
\BE
	\lim_{\nu\rightarrow \nu_b}h_f(\nu)&=&-\delta\int_{\eta_1}^{\nu_b}tf(t) \ < \ 0. \n
\EE
Hence, by continuity of $h_f(\nu)$, either equation $h_f(\nu)=0$ admits an unique solution denoted by $\nu^*$ on $(\nu_b,\infty)\bigcap \operatorname{supp}(f)$ or $h_f(\nu)<0$ holds for all $\nu \in (\nu_b,\infty)\bigcap \operatorname{supp}(f)$. Hence, the minimum risk is achieved at $\nu =\nu^*$ if $\nu^*$ exists. Otherwise, it is achieved at any $\nu \in \bbR\bigcup \{+\infty\}$ such that $\int_\nu^{\infty}f(s)\dif s=0$. Hence, if $\nu^*$ exists, the value of the minimum risk given by
\BE
  \bbE_{\vw,\vtheta}[\error] &\ipt& \frac{N}{c_N}\cdot\frac{\ration}{\ration-\delta\int_{\nu^*}^{\infty} f(t)\dif t }\cdot \delta\int_{\eta_1}^{\nu^*}tf(t)\dif s \=  \frac{N}{c_N}\cdot \ration\nu^*
	, \n
\EE
where the last equation is due to the fact that $h_f(\nu^*)=0$. Otherwise, the value of the minimum risk given by
\BE
\bbE_{\vw,\vtheta}[\error]
	&\ipt&
    \frac{N}{c_N} \delta\int_{\eta_1}^{\infty}tf(t)\dif t.\n
\EE

\subsection{Proof of Part (ii)}
We apply the same strategy for the proof of Theorem \ref{thm:p>n}.  Since the proof is similar to the proof we have shown for Theorem \ref{thm:p>n} in Section \ref{sec:proofp>n} and Appendix \ref{app:p>n}, we only address a few differences here. 

From Section \ref{sec:proofp>n}, we should first show that equation $q_f(s,\nu)=0$ admits an unique solution on $(0,\infty)$. Note that
\BE
	\frac{\partial q_f(s,\nu)/s}{\partial s} &=& \delta\int_{\nu}^{\infty} \frac{tf(t)}{(s+t)^2}\dif t \ >\ 0. \label{eq:generalrelation1} 
\EE
Hence, $q_f(s,\nu)/s$ is a strictly increasing function of $s$ on $s\in(0,\infty)$. Further, since $\nu<\nu_b$, we have
\BE
	\lim_{s\rightarrow 0}\frac{q_f(s,\nu)}{s}&=&\ration-\delta\int_{\nu}^{\infty}f(t)\dif t \=  \ < \ 0,	\n\\
	\lim_{s\rightarrow \infty}\frac{q_f(s,\nu)}{s}&=&\ration - 0 \ > 0.	\label{eq:generalrelation_eq2}
 \EE	
Hence, by continuity of function $q_f(s,\nu)/s$, we know $q_f(s,\nu)/s=0$ admits a unique solution denoted by $s^*_f$ on $(0,\infty)$.

Note that with the same proof shown in Section \ref{sec:proofp>n}, we have
\BE
\bbE_{\vw,\vtheta}[\error]
	&=&	
	\left(\underbrace{\trace{\vSigmaP\left(\vI-\vP^{\perp}_{\vXP}\right)}}_{\text{part 1}}+\underbrace{\trace{\vXpC^{\t}\left(\vXP\vXP^{\t}\right)^{-1}\vXP\vSigmaP\vXP^{\t}\left(\vXP\vXP^{\t}\right)^{-1}\vXpC}+\trace{\vSigmapC}}_{\text{part 2}}\right)	\n\\
	&&+\sigma^2\underbrace{\left(\trace{\left(\vXP\vXP^{\t}\right)^{-1}\vXP\vSigmaP\vXP^{\t}\left(\vXP\vXP^{\t}\right)^{-1}}+1\right)}_{\text{part 3}}.\n
\EE 

To calculate part 1, we employ the proof strategy shown in Appendix \ref{app:part1} with the following remarks. First, the expression for $\ratiop$ is now given by 
\[
	\ratiop(\nu)=\int_{\nu}^{\infty}f(t)\dif t.
\]
Second, we should choose $\mu_n=\min(\frac{1}{\sqrt{N}},o(1/c_N))$ instead of $\mu_n=\min(\frac{1}{\sqrt{N}},o(N^{-\kappa}))$. Third, to directly apply \Cref{lem:St}, we require $\delta=1$ from Assumption $\B1$. Yet, since we restrict $\ration<\delta$ in Assumption $\B2$, it is straightforward to extend the results in Lemma \ref{lem:St} to handle the case where $\delta\in (0,1)$ by following the proof presented in \Cref{app:St}. The results of \Cref{lem:dH} is directly assumed by Assumption $\B1$. Finally to apply \Cref{lem:eigen}, we require $\frac{n}{2}$ smallest eigenvalue of $(c_N\vSigmaP)^{-1}$ is lower bounded by a positive constant. This can be easily verified due to Assumption $\B1$ and the restriction on $\ration<\delta$. Hence, follow the proof in Appendix \ref{app:part1} with these remarks, we can conclude that 
\BE
	\text{part 1}
	&\ipt&
	\frac{N}{c_N} \cdot \frac{\ration}{m_f(0)}
	,	\label{eq:generalriskp=Npre}
\EE
where $m_f(-\mu)$, the Stieltjes transform of the limiting spectral distribution of the matrix $\frac{1}{n}\vtX\vtX^{\t}$, is given by
\BE
	\mu
	&=&
	\frac{1}{m_f(-\mu)}-\frac{\ratiop(\nu)}{\ration} \cdot \frac{\int_{\nu}^{\infty} \frac{t f(t)}{1+t\cdot m_f(-\mu)}\dif t}{\int_{\nu}^{\infty}f(t)\dif t}
	,	\n
\EE
which is equivalent to
\BE
	\mu
	&=&
	\frac{1}{m_f(-\mu)}-\frac{\delta}{\ration} \cdot \int_{\nu}^{\infty} \frac{t f(t)}{1+t\cdot m_f(-\mu)}\dif t
	.	\label{eq:mmgeneral}
\EE
Therefore, we know $m^*_f=m_f(0)>0$ is the solution of the following equation
\BE
	0&=&
	\frac{\ration}{m^*_f}- \frac{\delta}{m^*_f} \int_{\nu}^{\infty} \frac{t f(t)}{1/m^*_f+t }\dif t
	\= q_f\left(\frac{1}{m^*_f},\nu\right)
	.	\label{eq:mmgeneral2}
\EE
Then $s=\frac{1}{m^*_f}$ should be the solution of equation $q_f(s,\nu)=0$. By uniqueness of $s^*_f$, we have $s^*_f=\frac{1}{m^*_f}$.

For part 2 and part 3, we employ the proof strategy shown in \Cref{app:part2} with a few remarks. First, note that due to Assumption $\B1$, we have
\[
	\trace{c_N\vSigmapC}\rightarrow N \cdot \delta\int_{\eta_1}^{\nu}tf(t)\dif t \quad \text{and }\quad \trace{c_N^2\vSigmapC^2}\rightarrow N \cdot \delta\int_{\eta_1}^{\nu}t^2f(t)\dif t.
\]
Hence, we have the following analogue of \eqref{eq:p>npart2}:
\[
	\text{part 2}\ \ipt \ \frac{N}{c_N} \cdot \delta\int_{\eta_1}^{\nu}tf(t)\dif t \cdot (\psi+1)+\Op\left(\frac{\sqrt{N}}{c_N}\cdot \psi\int_{\eta_1}^{\nu}t^2f(t)\dif t\right),
\]
where $\psi =\trace{\vSigmaP\vXP^{\t}\left(\vXP\vXP^{\t}\right)^{-2}\vXP}$. Finally, to show \eqref{eq:p>npart2goal}, we should choose $\mu_n=\min(\frac{1}{\sqrt{N}},o(1/c_N))$ instead of $\mu_n=\min(\frac{1}{\sqrt{N}},o(N^{-\kappa}))$. Thus, with these remarks and modifications, we can show that
\[
	\text{part 2}\ \ipt \ \frac{N}{c_N} \cdot \delta\int_{\eta_1}^{\nu}tf(t)\dif t \cdot \frac{m_f'(0)}{m^2_f(0)},
\]
and
\[
	\text{part 3}\ \ipt \  \frac{m_f'(0)}{m^2_f(0)}.
\]
Hence, our last step is to characterize $m_f'(0)$ using the chain rule. Note that from \eqref{eq:mmgeneral} and \eqref{eq:mmgeneral2}, we have
\[
	-\ration z = q_f\left(\frac{1}{m_f(z)},\nu\right)
\]
Hence, taking the derivative with respect to $z$ on both sides and with the chain rule, we have
\BE
	-\ration&=&\frac{\partial q_f(s,\nu)}{\partial s}\Big|_{s=\frac{1}{m_f(z)}}\cdot \left(-\frac{m'_f(z)}{(m_f(z))^2}\right).\n
\EE  
Hence, we have
\BE
	\frac{m_f'(0)}{m^2_f(0)}&=&\left(\frac{\partial q_f(s,\nu)}{\partial s}\Big|_{s=s^*_f}\right)^{-1}
	\= \ration\left(\frac{q_f(s^*_f,\nu)}{s^*_f}+s^*_f\delta\int_{\nu}^{\infty}\frac{tf(t)}{(s^*_{f}+t)^2}\dif t\right)^{-1}
		\n\\
	&=&
	\ration\left(s^*_f\delta\int_{\nu}^{\infty}\frac{tf(t)}{(s^*_{f}+t)^2}\dif t\right)^{-1}
	,	\n
\EE
where last equation is due to the fact that $q_f(s^*_f,\nu)=0$ and $s^*_f>0$. Hence, we have
\[
	\text{part 2}\ \ipt \ \frac{N}{c_N} \cdot \ration\frac{\int_{\eta_1}^{\nu}tf(t)\dif t}{s^*_f\int_{\nu}^{\infty}\frac{tf(t)}{(s^*_{f}+t)^2}\dif t},
\]
and
\[
	\text{part 3}\ \ipt \  \ration\left(s^*_f\delta\int_{\nu}^{\infty}\frac{tf(t)}{(s^*_{f}+t)^2}\dif t\right)^{-1}.
\]
This completes the proof of (ii) of the theorem.

\subsection{Proof of Part (iii)}
Suppose equation $h_f(\nu)=0$ has a solution on $(\nu_b,\infty)\bigcap \operatorname{supp}(f)$. Then by comparing the two formula in \eqref{eq:generalriskp=N} and \eqref{eq:generalminp<n1}, we just need to show $s^*_f=\frac{1}{m^*_f}<\nu^*$. Then, from \eqref{eq:generalrelation1} and \eqref{eq:generalrelation_eq2}, we have 
\BE
	\forall\ s_0\in (0,\infty),\ \  \text{if} \ \ q_f(s_0)>0, \quad \text{then} \ \ s^*_f<s_0. \n
\EE
Hence, it is sufficient to show that $q_f(\nu^*)>h_f(\nu^*)=0$. Note that $\forall \nu\geq \eta_1$
\BE
	h_f(\nu)-q_f(\nu)
	&=&
	\nu\delta\int_{\eta_1}^{\infty} \frac{tf(t)}{\nu+t}\dif t-\nu\delta\int_{\nu}^{\infty}f(t)\dif t-\delta\int_{\eta_1}^{\nu}tf(t)\dif t
		\n\\
	&=&\delta\nu \left(\int_{\nu}^{\infty} \frac{tf(t)}{\nu+t}\dif t-\int_{\nu}^{\infty}f(t)\dif t\right)+\delta\left(\int_{\eta_1}^{\nu} \frac{\nu}{\nu+t}tf(t)\dif t-\int_{\eta_1}^{\nu}tf(t)\dif t\right)
	\ < \
	0.	\n
\EE 
Then since $\nu^*>\nu_b>\eta_1$, we have $q_f(\nu^*)>h_f(\nu^*)=0$.

If equation $h_f(\nu)=0$ does not have a solution on $(\nu_b,\infty)\bigcap \operatorname{supp}(f)$, then by comparing the two formula in \eqref{eq:generalriskp=N} and \eqref{eq:generalminp<n2}, we just need to show 
\[
	\ration s^*_f
	\=
	\frac{\ration}{m^*_f}
	\ <\ \delta\int_{\eta_1}^{\infty}tf(t)\dif t
	,
\]
which is true because, due to $q_f(s^*_f)=0$, we have
\BE
	\ration s^*_f&=&s^*_f\delta\int_{\eta_1}^{\infty} \frac{tf(t)}{s^*_f+t}\dif t \ < \ \delta\int_{\eta_1}^{\infty} tf(t) \dif t. \n	
\EE

Putting everything together completes
the proof of part (iii).

 \end{appendix}

\end{document}